\title[Eilenberg--MacLane mapping algebras]{Eilenberg--MacLane mapping algebras and higher distributivity up to homotopy} 
\author{Hans-Joachim Baues}
\address{Max-Planck-Institut f\"ur Mathematik\\
Vivatsgasse 7\\
53111 Bonn\\
Germany}
\email{baues@mpim-bonn.mpg.de}
\author{Martin Frankland}
\address{Universit\"at Osnabr\"uck\\
Institut f\"ur Mathematik\\
Albrechtstr. 28a\\
49076 Osnabr\"uck\\
Germany}
\email{martin.frankland@uni-osnabrueck.de}
\keywords{higher distributivity, distributivity up to homotopy, higher cohomology operation, Eilenberg-MacLane spectrum, Steenrod algebra, Kristensen derivation, homotopy invariant, A-infinity morphism, topological abelian group, mapping theory, mapping algebra.}
\subjclass[2010]{Primary: 55S20; Secondary: 55P20, 55S10, 18G55.}
\date{\today}
\tikzset{
 ->-/.style = {
  decoration = {markings, mark=at position #1 with {\arrow[scale=1.3]{>}}},
  postaction = {decorate}
 }
}
\newextarrow{\xrightrightarrow}{{20}{20}{20}{20}}{\bigRelbar\bigRelbar{\bigtwoarrowsleft\rightarrow\rightarrow}}
\numberwithin{equation}{section}
\numberwithin{figure}{section}
\theoremstyle{plain}
\newtheorem{thm}[equation]{Theorem}
\newtheorem{prop}[equation]{Proposition}
\newtheorem{cor}[equation]{Corollary}
\newtheorem{lem}[equation]{Lemma}
\theoremstyle{definition}
\newtheorem{defn}[equation]{Definition}
\newtheorem{nota}[equation]{Notation}
\newtheorem{rem}[equation]{Remark}
\newtheorem{ex}[equation]{Example}
\newtheorem{ques}[equation]{Question}
\newcommand{\bigcol}{3.8pc} 
\newcommand{\bigrow}{3.8pc} 
\newcommand{\F}{\mathbb{F}}
\newcommand{\Z}{\mathbb{Z}}
\newcommand{\cat}[1]{\mathcal{#1}} 
\newcommand{\ord}[1]{\left[ \mathbf{#1} \right]}
\newcommand{\steen}{\mathcal{A}} 
\newcommand{\De}{\Delta}
\newcommand{\de}{\delta}
\newcommand{\del}{\partial}
\newcommand{\ep}{\epsilon}
\newcommand{\Ga}{\Gamma}
\newcommand{\ga}{\gamma}
\newcommand{\io}{\iota}
\newcommand{\ka}{\kappa}
\newcommand{\la}{\lambda}
\newcommand{\Om}{\Omega}
\newcommand{\phy}{\varphi}
\newcommand{\Si}{\Sigma}
\newcommand{\si}{\sigma}
\newcommand{\te}{\theta}
\newcommand{\bu}{\bullet}
\newcommand{\inj}{\hookrightarrow}
\newcommand{\ral}{\xrightarrow} 
\newcommand{\Ra}{\Rightarrow}
\newcommand{\surj}{\twoheadrightarrow}
\newcommand{\tild}{\widetilde}
\newcommand{\dfn}{:=}
\newcommand{\inv}{\boxminus}
\newcommand{\ol}{\overline}
\newcommand{\op}{\oplus}
\newcommand{\ot}{\otimes}
\newcommand{\place}{\star}
\newcommand{\sm}{\wedge}
\newcommand{\sq}{\square \,}
\newcommand{\ul}{\underline}
\newcommand{\x}{\times}
\newcommand{\Ab}{\mathbf{Ab}}
\newcommand{\Gp}{\mathbf{Gp}}
\newcommand{\obs}[1]{\mathcal{O}(#1)} 
\newcommand{\Set}{\mathbf{Set}}
\newcommand{\Spec}{\mathbf{Spec}}
\newcommand{\Topp}{\mathbf{Top}}
\newcommand{\abs}[1]{\lvert #1 \rvert}
\DeclareMathOperator{\diag}{diag}
\DeclareMathOperator{\End}{End}
\DeclareMathOperator{\Ext}{Ext}
\DeclareMathOperator{\Hom}{Hom}
\DeclareMathOperator{\Ob}{Ob}
\newcommand{\cst}{\mathrm{cst}}
\newcommand{\id}{\mathrm{id}}
\newcommand{\ObIm}{\mathrm{ObIm}}
\newcommand{\proj}{\mathrm{proj}}
\newcommand{\Sing}{\mathrm{Sing}}
\newcommand{\sh}{\mathrm{sh}}
\newcommand{\Sq}{\mathrm{Sq}}
\newcommand{\Def}{\textbf} 
\newcommand{\adjust}{0.4}
\definecolor{darkgreen}{rgb}{0,0.4,0}
\newcommand{\EdgeColor}{darkgreen}
\newcommand{\FaceColor}{purple}
\newcommand{\VertexColor}{black}
\newcommand{\VertexSize}{0.05em}
\newcommand{\DoubleArrow}{\draw [-implies, line width=0.16ex, double distance=0.55ex]} 
\begin{document} 
 
\begin{abstract}  
Primary cohomology operations, i.e., elements of the Steenrod algebra, are given by homotopy  classes of maps between Eilenberg--MacLane spectra. Such maps (before taking homotopy classes) form the topological version of the Steenrod algebra. Composition of such maps is strictly linear in one variable and linear up to coherent homotopy in the other variable. To describe this structure, we introduce a hierarchy of higher distributivity laws, and prove that the topological Steenrod algebra satisfies all of them. We show that the higher distributivity laws are homotopy invariant in a suitable sense. As an application of $2$-distributivity, we provide a new construction of a
derivation of degree $-2$ of the mod $2$ Steenrod algebra.
\end{abstract} 
\maketitle
\tableofcontents


\section{Introduction}

The elements of the Steenrod algebra are primary cohomology operations, which are given by homotopy classes of maps between Eilenberg--MacLane spectra. Richer information in contained in the %
\emph{mapping spaces} between Eilenberg--MacLane spectra, which form a topological 
version of the Steenrod algebra that encodes secondary and all higher order cohomology operations. 
One of the great successes of algebraic topology was the complete computation of the Steenrod algebra, and the study of its algebraic properties \cite{Milnor58} \cite{Adams58}.
In contrast, 
the algebraic nature of higher order cohomology operations remained %
lesser known. Only secondary operations were studied in detail, notably in \cite{Adams60}, \cite{Kristensen63}, \cite{KristensenM67eva}, \cite{Kristensen69}, \cite{Klaus01coc}, \cite{Harper02}, and \cite{Baues06}.

One of the difficulties with higher order cohomology operations is that they do not form an algebra. %
While composition of elements in the Steenrod algebra is bilinear, 
composition in the %
topological Steenrod algebra 
is not bilinear, but left linear (strictly) and right linear up to coherent homotopy.

In this paper, we introduce the notion of $n^{\text{th}}$ order distributivity (for $n \geq 0$), which is similar to the notion of $n^{\text{th}}$ order associativity or $n^{\text{th}}$ order commutativity. Stasheff described higher order associativity via $A_{\infty}$-spaces, based on associahedra \cite{Stasheff63i} \cite{MarklSS02}*{\S I.1.6, II.1.6}. Other polytopes have been used to describe homotopy coherent algebraic structure, such as permutahedra, which encode higher order commutativity \cite{Williams69}, or permuto-associahedra, which mix higher order associativity and commutativity \cite{Kapranov93}. A different (less strict) notion of higher distributivity is studied in \cite{Cranch10}*{\S 6} using distributahedra. 
The higher order distributivity that we consider turns out to be based on higher dimensional cubes. 

Our main result can be roughly stated as follows.

\begin{thm}
The %
topological Steenrod algebra satisfies infinitely many higher order coherent distributivity laws up to homotopy.
\end{thm}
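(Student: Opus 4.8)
The plan is to deduce the theorem from one structural property of Eilenberg--MacLane spectra, and in fact to prove the statement for the \emph{mapping algebra} of an arbitrary family of such spectra, the topological Steenrod algebra being the case of the family $\{\Sigma^k H\F_p\}_{k}$ together with its mapping spectra. First I would fix a rigid point-set model in which each Eilenberg--MacLane spectrum is a (symmetric) spectrum of simplicial, or topological, abelian groups, for instance via $\Z\{-\}$, or equivalently as an $H\F_p$-module. In such a model the mapping spectrum $\HOM(X,Y)$ between two Eilenberg--MacLane spectra is again an abelian group object, so the addition on each $\HOM(X,Y)$ is \emph{strictly} associative, commutative, and unital; composition is \emph{strictly} associative; and, because the group structure on mapping spectra is pointwise, composition $\HOM(Y,Z)\times\HOM(X,Y)\to\HOM(X,Z)$ is \emph{strictly} linear in the left variable. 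Hence the only part of an $n^{\text{th}}$ order distributivity law not already present on the nose is the right-hand coherence: for every $f\colon Y\to Z$ and every $n$-tuple $g_1,\dots,g_n\colon X\to Y$ one must produce an $n$-cube of maps $X\to Z$ whose vertex indexed by $S\subseteq\{1,\dots,n\}$ is $x\mapsto f\bigl(\sum_{i\in S}g_i(x)\bigr)+\sum_{i\notin S}f\bigl(g_i(x)\bigr)$, interpolating between the corner $f\circ(g_1+\dots+g_n)$ and the corner $f\circ g_1+\dots+f\circ g_n$, with these cubes compatible with the cubical face and degeneracy operators and natural in $X$, in the $g_i$, and in the triple $(X,Y,Z)$.

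The homotopies filling these cubes exist for a stable reason: for spectra the inclusion $Y^{\vee n}\hookrightarrow Y^{\times n}$ of the $n$-fold wedge into the $n$-fold product is a stable equivalence, so the reduced cross-effects $\chi_k(f)$ of a stable map $f$ are stably trivial; equivalently, $f$ is canonically an infinite loop map and so is additive up to coherent higher homotopy. Turning this into the \emph{strict} cubical data the definition asks for can be done in two ways, and I would use whichever is cleaner once the precise axioms are in hand. The explicit route, in the spirit of the cochain-level computations standard in this subject, is to write the homotopies down: the comparison maps of the Dold--Kan and Eilenberg--Zilber machinery (the shuffle and Alexander--Whitney maps, together with the standard contracting homotopies of the normalized bar complexes) yield, functorially in the input, chain-level homotopies that measure and then trivialize the failure of a cochain operation to be additive, and the classical coherence identities among those maps supply the higher cells of the cubes. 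The conceptual route is obstruction theory: the space of coherent cubical distributivity data on a given $f$ is the homotopy limit of a tower whose layers are mapping spectra between Eilenberg--MacLane spectra, hence generalized Eilenberg--MacLane spectra, so the successive obstructions live in explicitly identifiable groups expressible through the (co)homology of the Steenrod algebra; one then either checks these vanish in the relevant range or, more robustly, arranges the construction to be functorial enough that no choices, hence no obstructions, ever arise.

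Finally I would assemble this data over all $n$ and all triples $(X,Y,Z)$ and verify the axioms of $n^{\text{th}}$ order distributivity up to homotopy set up earlier: compatibility of the $n$-cubes with faces and degeneracies, the behaviour at the units $1_Y$ and the zero maps, and the interaction with strict associativity of composition. I expect the main obstacle to be exactly this coherence. Producing the homotopy for one fixed grouping of one sum is routine; the difficulty is to arrange that the homotopies for all $n$ simultaneously, and for every way of iteratively distributing and regrouping a sum, agree \emph{strictly} rather than only up to still higher homotopy, that is, to fill every face of every cube in a mutually compatible manner. It is here that one must either establish a simultaneous vanishing of all the relevant obstruction groups or else rely on the strict functoriality and the known coherences of the explicit Eilenberg--Zilber-type construction, and I would expect the written proof to take the latter, more hands-on path.
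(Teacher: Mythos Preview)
Your setup is sound: modeling Eilenberg--MacLane spectra as strict abelian group objects so that left linearity holds on the nose, and recognizing that the key stable input is the equivalence $Y^{\vee n}\to Y^{\times n}$. But your description of the $n$-cube is not the paper's. You index vertices by arbitrary subsets $S\subseteq\{1,\dots,n\}$, which allows non-contiguous groupings such as $f(g_1+g_3)+f(g_2)$. The paper's $n$-cube, for inputs $x_0,\ldots,x_n$, records only which of the $n$ plus signs have already been distributed; only contiguous groupings appear, and the faces obey the clean recursion of Equation~\eqref{eq:Boundary} (setting $t_j=0$ merges $x_{j-1}+x_j$, setting $t_j=1$ splits the cube as an external sum of two smaller ones).

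More seriously, neither of your two construction routes is what the paper does, and both leave the coherence step you correctly flag as ``the main obstacle'' essentially unaddressed. The paper's argument is a one-step lifting, organized around two auxiliary conditions on a distributor (Definition~\ref{def:Good}): \emph{universality}, which says $\phy_a^{x_0,\ldots,x_n}=\phy_a^{p_0,\ldots,p_n}\ot(x_0,\ldots,x_n)$ and so reduces everything to the single universal cube in $\cat{T}(A^{n+1},B)$; and the \emph{wedge condition}, which says that universal cube becomes constant after restriction along $(i_0^*,\ldots,i_n^*)$. The crucial point is that $(i_0^*,\ldots,i_n^*)\colon\cat{T}(A^{n+1},B)\to\cat{T}(A,B)^{n+1}$ is a \emph{trivial Serre fibration}; this is precisely how the paper packages your wedge-versus-product equivalence (Proposition~\ref{pr:WeaklyBilinear}\eqref{item:TrivFib}). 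Given a good $(n{-}1)$-distributor, the obstruction map on $\del I^n$ and the constant map on $I^n$ assemble into a commutative square with a Serre cofibration on the left (here one needs the mapping spaces to be Serre cofibrant) and this trivial fibration on the right, so a filler exists and is unique up to homotopy rel $\del I^n$ (Proposition~\ref{pr:ExistNDistrib}). There are no Eilenberg--Zilber formulas to write down and no tower of obstruction groups to compute: the obstruction lives in the fiber of a trivial fibration and dies tautologically. Your proposal never isolates this universality-plus-wedge reduction, and without it the inductive coherence is not under control; your prediction that the paper takes the ``more hands-on'' cochain route is also incorrect.
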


\subsection*{Organization}

In Section~\ref{sec:EMmapping}, we introduce the notion of \emph{weakly bilinear mapping theory} (Definition~\ref{def:LeftLin}), which has an addition, a multiplication, left linearity, but might not be strictly right linear. The motivational example is the \emph{Eilenberg--MacLane mapping theory} $\cat{EM}$, given be finite products of Eilenberg--Maclane spectra and mapping spaces between them (Definition~\ref{def:EMMappingTh} and Proposition~\ref{pr:WeaklyBilinear}). 

In Section~\ref{sec:HigherDist}, we define higher distributivity via higher dimensional cubes (Definition~\ref{def:nDistrib}) and reformulate it as an inductive construction (Lemma~\ref{lem:nDistribInduc}). Our main result is that a weakly bilinear mapping theory is $\infty$-distributive, in a canonical way (Theorem~\ref{thm:InftyDistrib} and Proposition~\ref{pr:ExistNDistrib}).

Section~\ref{sec:GoodDistrib} studies additional properties that the distributivity data might satisfy. These are used in the proof of the main result.

In Section~\ref{sec:Kristensen}, we study applications of higher distributivity to the mod $2$ Steenrod algebra $\steen$. We recall how the Kristensen derivation $\ka \colon \steen \to \steen$ can be obtained from the $1$-distributivity of $\cat{EM}$; Kristensen's original construction used cochain operations. 
Using the $2$-distributivity of $\cat{EM}$, we %
provide a new construction of a 
derivation $\la \colon \steen \to \steen$ of degree $-2$ (Proposition~\ref{pr:LaDerivation}). We leave for future research an explicit algebraic formula for this derivation.

In Section~\ref{sec:HomotInvar}, we show that $n$-distributivity is homotopy invariant in some appropriate sense. More precisely, our notion of higher distributivity has some strict structure built in: a strict addition, a strictly associative multiplication, and left linearity holding strictly.  Corollary~\ref{cor:HomotInvar} says that $n$-distributivity is invariant with respect to Dwyer--Kan equivalences that preserve addition and multiplication strictly.

Appendix~\ref{sec:BFSpectra} collects for convenience some facts about Eilenberg--MacLane spectra.

\subsection*{Related work}

By applying the fundamental groupoid functor $\Pi_{1}$ to $\cat{EM}$, one obtains a groupoid-enriched theory $\Pi_{1} \cat{EM}$ which encodes \emph{secondary cohomology operations}, and was computed in \cite{Baues06}. %
One motivation for studying secondary operations was to compute the classical Adams differential $d_2$ \cite{BauesJ11}; c.f. Remark~\ref{rem:AdamsD2}. This paper does not address applications to the Adams spectral sequence. The paper \cite{BauesFrankland16} was about the Adams $d_3$, without using the additive structure of $\cat{EM}$. %

One of the steps in \cite{Baues06} %
was a structural result: replacing the track category $\Pi_1 \cat{EM}$ by a weakly equivalent DG-category over $\Z/p^2$, using the $1$-distributivity of $\cat{EM}$. We revisit that strictification result in \cite{BauesFranklandDGA}. The current paper does not address strictification. However, as groundwork towards the strictification problem for tertiary operations, we study the $2$-distributivity of $\cat{EM}$ in more detail in Section~\ref{sec:Kristensen}. 

\subsection*{Acknowledgments}

We thank the referee for their helpful comments. 
The second author thanks the Max-Planck-Institut f\"ur Mathematik Bonn for its generous hospitality, as well as Tobias Barthel, David Blanc, Ya\"el Fr\'egier, Lennart Meier, Fernando Muro, Irakli Patchkoria, Stefan Schwede, and Marc Stephan for useful conversations. 
The second author was partially funded by a grant of the DFG SPP 1786: Homotopy Theory and Algebraic Geometry.

\section{Notations and conventions}

Let $\Topp$ denote a convenient category of topological spaces, for instance compactly generated weakly Hausdorff spaces, so that internal hom objects $Y^X$ exist for all objects $X$ and $Y$ of $\Topp$. Let $\Topp_*$ denote the category of pointed spaces, with basepoints generically denoted by $0 \in X$. 

Enrichment in $\Topp_*$ will mean with respect to the smash product $X \sm Y$ as symmetric monoidal structure. 
Following the terminology of \cite{Kelly05}*{\S 1.2}, we call a $\Topp_*$-enriched category a $\Topp_*$-category for short, and likewise for functors. 
In a $\Topp_*$-category $\cat{C}$, we denote the composition map by
\[
\mu \colon \cat{C}(B,C) \sm \cat{C}(A,B) \to \cat{C}(A,C)
\]
and write $\mu(x,y) = xy$ for short. Equivalently, this can be described by the map
\[
\mu \colon \cat{C}(B,C) \x \cat{C}(A,B) \to \cat{C}(A,C)
\]
satisfying $\mu(0,y) = 0$ and $\mu(x,0) = 0$ for all $x$ and $y$.

\begin{nota} \label{nota:OmitObjects}
We write $x \in \cat{C}$ if $x$ is a \emph{morphism} in $\cat{C}$, i.e., $x \in \cat{C}(A,B)$ for some objects $A$ and $B$ of $\cat{C}$. %
From now on, whenever an expression such as $xy$ %
appears, it is understood that $x$ and $y$ must be composable, i.e., the target of $y$ is the source of $x$.
\end{nota}

\section{Eilenberg--MacLane mapping algebras} \label{sec:EMmapping}

In this section, we describe our main object of interest: a topologically enriched category $\cat{EM}$ that encodes cohomology operations of all higher orders.

\subsection{Topologically enriched Steenrod algebra}

\begin{nota} \label{nota:BFSpectra}
Let $\Spec$ denote the category of \emph{Bousfield--Friedlander spectra}, viewed as a $\Topp_*$-category. Details are given in Appendix~\ref{sec:BFSpectra}.
\end{nota}

\begin{nota}
Fix a prime number $p$ and let $\F_p$ denote the field of order $p$. Let $\steen$ denote the mod $p$ Steenrod algebra, with grading $\steen^* = H\F_p^* H\F_p$. For a spectrum $X$, the mod $p$ cohomology $H^*(X;\F_p)$ is viewed as a \emph{left} $\steen$-module.
\end{nota}

\begin{defn} \label{def:EMMappingTh}
For every $n \in \Z$, let $K_n$ be %
a spectrum of the homotopy type of $\Si^n H\F_p$ 
as in Corollary~\ref{cor:GoodModelFiniteProd}. The mod $p$ \Def{Eilenberg--MacLane mapping theory}
$\cat{EM}$ is the full subcategory of $\Spec$ consisting of finite products
\[
A = K_{n_1} \x \ldots \x K_{n_k}.
\]
\end{defn}

The mapping theory $\cat{EM}$ is a topological refinement of the Steenrod algebra $\steen$. In fact, %
the category $\pi_0 \cat{EM}$ of path components of $\cat{EM}$ is equivalent to the opposite of the category of finitely generated free $\steen$-modules, so that $\pi_0 \cat{EM}$ is the theory of $\steen$-modules.

\begin{prop} \label{pr:WeaklyBilinear}
The $\Topp_*$-category $\cat{T} \dfn \cat{EM}$ has the following properties.
\begin{enumerate}
\item \label{item:FiniteProd} It has finite products.
\item \label{item:TopAbGp} All mapping spaces $\cat{T}(A,B)$ are topological abelian groups, with the basepoint $0$ being the additive identity.
\item \label{item:LeftLin} Composition is left linear, i.e., satisfies $(x+x')y = xy + x'y$.
\item \label{item:TrivFib} For all objects $A,B,Z$ of $\cat{T}$ the map
\[
\xymatrix @C=3pc {
\cat{T}(A \x B, Z) \ar[r]^-{(i_A^*, i_B^*)} & \cat{T}(A,Z) \x \cat{T}(B,Z) \\
}
\]
is a trivial fibration, i.e., a Serre fibration and a weak equivalence. Here, $i_A \colon A \to A \x B$ and $i_B \colon B \to A \x B$ denote the inclusion maps given by $i_A = (1_A, 0)$ and $i_B = (0,1_B)$. %
\end{enumerate}
\end{prop}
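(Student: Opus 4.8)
The plan is to verify the four items essentially by unwinding the definitions, reducing everything to known facts about Bousfield--Friedlander spectra collected in Appendix~\ref{sec:BFSpectra} and the choice of good models in Corollary~\ref{cor:GoodModelFiniteProd}. For item~\eqref{item:FiniteProd}, the finite product $K_{n_1} \x \ldots \x K_{n_k}$ of objects of $\cat{EM}$ is again a finite product of the $K_n$, hence an object of $\cat{EM}$; one checks that the categorical product in $\Spec$ restricts to a product in the full subcategory $\cat{EM}$, which is immediate since the universal property only involves maps into the product. For item~\eqref{item:TopAbGp}, I would use that each $K_n$ has the homotopy type of $\Si^n H\F_p$, which is an infinite loop space (spectrum), so that the mapping spectra $\cat{EM}(A,B)$ inherit an abelian group structure; more precisely, the appendix should record that for the chosen models the mapping spaces are topological abelian groups with $0$ as unit, and that this structure is natural. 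The key point is that $\Si^n H\F_p$ is an $H$-space (indeed a topological abelian group model is available), so mapping into it yields a topological abelian group, and finite products of such are again such.

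For item~\eqref{item:LeftLin}, left linearity $(x+x')y = xy + x'y$ follows from the fact that the composition map $\mu$ in a $\Topp_*$-category is, by construction of the enrichment, induced by a map of spectra, and that the addition on $\cat{EM}(A,B)$ is the one coming from the target being (a model of) an infinite loop space: precomposition with a fixed map $y$ is a continuous group homomorphism because it is induced by a map of the underlying spectra, whereas postcomposition need not be, which is exactly the asymmetry the paper is advertising. So I would phrase this as: the addition on $\cat{EM}(A,Z)$ is ``pointwise in $Z$'' in a suitable sense, and precomposition with $y \colon A \to B$ is a map of topological abelian groups $\cat{EM}(B,Z) \to \cat{EM}(A,Z)$.

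Item~\eqref{item:TrivFib} is the substantive one and I expect it to be the main obstacle. The map $(i_A^*, i_B^*) \colon \cat{T}(A \x B, Z) \to \cat{T}(A,Z) \x \cat{T}(B,Z)$ is the comparison map expressing that mapping out of a product of spectra should split, at the level of mapping \emph{spaces}, not just up to homotopy. I would argue in two stages. First, that it is a weak equivalence: since $Z$ is (a finite product of shifts of) $H\F_p$, the functor $\cat{T}(-,Z)$ on the homotopy category sends finite coproducts of cells to products, and because $A \x B$ is also the coproduct $A \vee B$ in the relevant homotopy category (finite products and coproducts of spectra agree up to weak equivalence), the map is a $\pi_*$-isomorphism; this uses the computation of $\pi_0 \cat{EM}$ as the theory of $\steen$-modules, or more directly the additivity of cohomology. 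Second, that it is a Serre fibration: this is where the careful choice of point-set models in Corollary~\ref{cor:GoodModelFiniteProd} is used — one needs the $K_n$ chosen so that the relevant evaluation/restriction maps are fibrations, e.g. by taking fibrant models or simplicial abelian group models where the restriction map along a cofibration $A \vee B \hookrightarrow A \x B$ of nice spectra is a fibration. Concretely, I would invoke that for a cofibration $j$ and a fibrant target the induced map on mapping objects is a fibration, together with a lemma from the appendix that $i_A \vee i_B \colon A \vee B \to A \x B$ is such a cofibration for our models and that $Z$ is fibrant. Combining the two stages gives a trivial fibration. The delicate part is ensuring these point-set properties hold simultaneously for all the finite products appearing in $\cat{EM}$, which is precisely what Corollary~\ref{cor:GoodModelFiniteProd} is set up to guarantee, so the proof should reduce to citing it.
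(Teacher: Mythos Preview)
Your proposal is correct and follows essentially the same route as the paper. The paper's proof is terser but hits the same points: items~\eqref{item:FiniteProd}--\eqref{item:LeftLin} follow because each object of $\cat{EM}$ is an abelian group object in $\Spec$ (Corollary~\ref{cor:GoodModelFiniteProd}), and item~\eqref{item:TrivFib} uses that $\io \colon A \vee B \to A \x B$ is a cofibration (Lemma~\ref{lem:WeakPushoutProd}) and a weak equivalence, together with the simplicial model category axioms, exactly as you outline. One small sharpening: for the weak equivalence half of~\eqref{item:TrivFib} you argue via $\pi_*$ and the homotopy category, but it is cleaner (and what the paper does) to note directly that $\io$ is a weak equivalence between cofibrant objects and $Z$ is fibrant, so $\io^*$ is a weak equivalence of simplicial mapping spaces by the standard Ken Brown/SM7 argument; also, one must remember at the end that geometric realization takes Kan fibrations to Serre fibrations.
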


\begin{proof}
By construction, $\cat{T}$ has finite products, which are the same as in the ambient category $\Spec$. Since each object $A$ of $\cat{T}$ is an abelian group object in $\Spec$, the mapping space $\Spec(X,A)$ is a topological abelian group, with pointwise addition in the target $A$, which makes composition left linear.

For objects $A$ and $B$ in $\cat{T}$, the natural map $\io \colon A \vee B \to A \x B$ is a cofibration in $\Spec$, by Lemma~\ref{lem:WeakPushoutProd}. Since $\Spec$ is a simplicial model category, the restriction map
\[
\xymatrix{
\ul{\Spec}(A \x B, Z) \ar[dr]_{(i_A^*, i_B^*)} \ar[r]^-{\io^*} & \ul{\Spec}(A \vee B, Z) \ar[d]^{\cong} \\
& \ul{\Spec}(A,Z) \x \ul{\Spec}(B,Z) \\
}
\]
is a Kan fibration for any fibrant object $Z$ in $\Spec$, in particular for any object in $\cat{T}$. Moreover, the map $\io \colon A \vee B \ral{\sim} A \x B$ is a weak equivalence. Hence, the restriction map $\io^*$ is a weak equivalence \cite{GoerssJ09}*{Lemma~II.4.2}. To conclude, use the fact that the geometric realization of a Kan fibration is a Serre fibration \cite{GoerssJ09}*{Theorem~10.10}.
\end{proof}

\begin{rem}
Consider the full subcategory $\ol{\cat{EM}}$ of $\Spec$ with objects the bounded below degreewise finite products $K = \prod_i K_{n_i}$. This category $\ol{\cat{EM}}$ 
also has the properties listed in Proposition~\ref{pr:WeaklyBilinear}.
It appears notably in the context of $H\F_p$-based Adams resolutions \cite{BauesFrankland16}*{\S 7}. 
\end{rem}

Let us give names to the features appearing in the proposition. For our main result, we will not use commutativity or additive inverses in the mapping spaces $\cat{T}(A,B)$. However, we will \emph{still use additive notation}.%

\begin{defn} \label{def:LeftLin}
A $\Topp_*$-category $\cat{T}$ is called:
\begin{enumerate}[(a)]
\item a \Def{mapping theory} if it is small and has finite products.%
\item \Def{left linear} if all mapping spaces $\cat{T}(A,B)$ are topological monoids, with the basepoint $0$ being the additive identity, and composition is left linear.
\end{enumerate}
A mapping theory is \Def{weakly bilinear} if it is left linear and satisfies property~\eqref{item:TrivFib} from Proposition~\ref{pr:WeaklyBilinear}.

A \Def{morphism} of left linear $\Topp_*$-categories is a $\Topp_*$-functor $F \colon \cat{S} \to \cat{T}$ such that for all objects $A,B$ of $\cat{S}$, the induced map
\[
F \colon \cat{S}(A,B) \to \cat{T}(FA,FB)
\]
is a monoid homomorphism (i.e., preserves addition strictly).%
\end{defn}

\subsection{Topologically enriched cohomology}

\begin{defn}
Given a cofibrant spectrum $X$, the mod $p$ \Def{Eilenberg--MacLane mapping algebra} of $X$ consists of $\cat{EM}$ together with the functor $F_X \colon \cat{EM} \to \Topp_*$ represented by $X$, given by $F_X(A) := \Spec(X,A)$. 
\end{defn}

Note that $F_X$ is a topological refinement of the cohomology of $X$ as an $\steen$-module, which is recovered as
\[
\pi_0 F_X(K_n) = [X,K_n] = H^n (X;\F_p).
\]
The notion of a mapping algebra can be described formally as follows; 
variants appear in \cite{BauesB11}*{\S 8}, \cite{BadziochBD14}*{\S 1}, and \cite{BlancS14}.

\begin{defn}
\begin{enumerate}
\item A \Def{model} of a mapping theory $\cat{T}$ is a $\Topp_*$-functor $F \colon \cat{T} \to \Topp_*$ which preserves finite products (strictly).
\item A \Def{mapping algebra} $(\cat{T},F)$ consists of a mapping theory $\cat{T}$ together with a model $F$ of $\cat{T}$.
\end{enumerate}
\end{defn}

\begin{rem} \label{rem:Placeholder}
The data of a mapping algebra $(\cat{T}, F)$ can be encoded into a $\Topp_*$-category $\cat{T} \{F\}$, whose objects are those of $\cat{T}$ plus a \Def{distinguished object} $\place$, and mapping spaces are given by
\[
\cat{T} \{F\} (A,B) = \begin{cases}
\cat{T}(A,B) &\text{if } A,B \in \Ob \cat{T} \\
F(B) &\text{if } A = \place, B \in \Ob \cat{T} \\
\ast &\text{if } A \in \Ob \cat{T}, B = \place \\
\{0, 1_{\place} \} &\text{if } A = B = \place. \\
\end{cases}
\]
Since $F$ preserves finite products, the product $A \x B$ in $\cat{T}$ is still a product in $\cat{T} \{F\}$. However, $\cat{T} \{F\}$ does not have products involving the distinguished object $\place$. 

Using this construction, statements about mapping theories will have analogues about mapping algebras. The arguments apply as long as we never map \emph{into} the distinguished object $\place$. Likewise, the notion of left linearity in Definition~\ref{def:LeftLin} has a straightforward analogue for mapping algebras.
\end{rem}

\begin{rem}\label{rem:AdamsD2}
Denote the Eilenberg--MacLane mapping algebra of a spectrum $X$ by $\cat{EM} \{ X \} \dfn \cat{EM} \{ F_X \}$. %
In \cite{BauesB15}, it was shown how the classical Adams spectral sequence
\[
E_2^{s,t} = \Ext_{\steen}^{s,t} \left( H^*(X;\F_p) , \F_p \right) \Ra \pi_{t-s} X^{\wedge}_p
\]
can be derived from $\cat{EM} \{ X \}$, more specifically, how the Postnikov section $P_n \cat{EM}\{ X \}$ determines the spectral sequence up to the $E_{n+2}$ term. In particular, the \emph{secondary cohomology} $\Pi_{1} \cat{EM} \{ X \}$ of the spectrum $X$ determines the $E_3$ term of the spectral sequence. 
\end{rem}

\subsection{A generalization}

For the record, we extract a more general statement from the proof of Proposition~\ref{pr:WeaklyBilinear}.

\begin{prop} \label{pr:SimpModelCat}
Let $\cat{C}$ be a pointed simplicial model category \cite{Quillen67}*{\S II.2} \cite{GoerssJ09}*{\S II.3}. View $\cat{C}$ as a $\Topp_*$-category by taking the geometric realization of the simplicial mapping spaces.
Assume that $\cat{C}$ satisfies the following: For any cofibrant objects $X$ and $Y$, the map $\io \colon X \vee Y \to X \x Y$ is a cofibration. %

Let $\cat{G} \subseteq \Ob \cat{C}$ be a set of monoid %
objects in $\cat{C}$ %
which are fibrant and cofibrant. Assume moreover that for any $A, B$ in $\cat{G}$, the map $A \vee B \to A \x B$ is a weak equivalence. Then the full subcategory $\cat{T}_{\cat{G}}$ of $\cat{C}$ consisting of finite products of objects of $\cat{G}$ is a weakly bilinear mapping theory.

Note that every object in $\cat{T}_{\cat{G}}$ is fibrant and cofibrant as an object in $\cat{C}$, so that the mapping spaces $\cat{C}(A,B)$ are derived mapping spaces.

\end{prop}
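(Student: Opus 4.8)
The plan is to reduce Proposition~\ref{pr:SimpModelCat} to the essentially identical argument used for Proposition~\ref{pr:WeaklyBilinear}, so that the bulk of the work is bookkeeping rather than new ideas. First I would unwind the definition of weakly bilinear mapping theory: we must check that $\cat{T}_{\cat{G}}$ is small with finite products, that each mapping space is a topological monoid with $0$ as identity and composition left linear, and that property~\eqref{item:TrivFib} holds. Smallness and the existence of finite products are immediate, since $\cat{G}$ is a set and products in $\cat{T}_{\cat{G}}$ are computed in $\cat{C}$ (a finite product of finite products of objects of $\cat{G}$ is again such a product). The fact that every object of $\cat{T}_{\cat{G}}$ is fibrant and cofibrant follows because fibrant and cofibrant objects are closed under finite products in any model category (for cofibrancy one also uses that the initial object is cofibrant and that the pushout-corner/coproduct of cofibrant objects is cofibrant — here the relevant special case is just that $A \times B$ sits in a cofibration from $A \vee B$, with $A \vee B$ cofibrant, hence $A \times B$ is cofibrant); this is the content of the closing remark in the statement and justifies calling the mapping spaces derived mapping spaces.

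Next, for the left linear structure: each $A \in \cat{G}$ is a monoid object in $\cat{C}$, and a finite product of monoid objects is a monoid object, so every object of $\cat{T}_{\cat{G}}$ is a monoid object in $\cat{C}$. Applying the representable $\Topp_*$-enriched functor $\cat{C}(X, -)$ (geometric realization of the simplicial mapping space) to the monoid structure maps of $B$ endows $\cat{C}(X,B)$ with the structure of a topological monoid, with unit the basepoint $0$ coming from the zero map $X \to \ast \to B$. Left linearity of composition, $(x+x')y = xy + x'y$, is the statement that for fixed $y \colon A \to B$ the precomposition map $y^* \colon \cat{C}(B,Z) \to \cat{C}(A,Z)$ is a monoid homomorphism, which holds because $y^*$ is natural and the monoid structure on both sides is induced from $Z$. (Right linearity is \emph{not} claimed, which is exactly the point of the word ``weakly''.)

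Finally, property~\eqref{item:TrivFib}. Fix $A, B \in \cat{T}_{\cat{G}}$; I first reduce to the case $A, B \in \cat{G}$ by an easy induction on the number of factors, using that $(A \times A') \vee B \to (A \times A') \times B$ factors through $A \vee A' \vee B$ and that a finite coproduct/product of cofibrations and of weak equivalences between cofibrant objects behaves well — concretely, it suffices to know $X \vee Y \to X \times Y$ is a cofibration for cofibrant $X, Y$ (the hypothesis) and a weak equivalence for $X, Y$ finite products of objects of $\cat{G}$. The latter follows from the $\cat{G}$-case by a gluing/induction argument: $A_1 \vee \cdots \vee A_k \to A_1 \times \cdots \times A_k$ is built from the two-fold maps, each of which is a weak equivalence by hypothesis (when the factors are in $\cat{G}$) or inductively, and weak equivalences between cofibrant objects are preserved by wedge and by the pushout-products involved. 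Then, exactly as in Proposition~\ref{pr:WeaklyBilinear}: since $\cat{C}$ is a simplicial model category and $\io \colon A \vee B \to A \times B$ is a cofibration between cofibrant objects with $Z$ fibrant, the simplicial restriction map $\io^* \colon \ul{\cat{C}}(A \times B, Z) \to \ul{\cat{C}}(A \vee B, Z)$ is a Kan fibration, and it is a weak equivalence because $\io$ is a weak equivalence between cofibrant objects \cite{GoerssJ09}*{Lemma~II.4.2}; composing with the canonical isomorphism $\ul{\cat{C}}(A \vee B, Z) \cong \ul{\cat{C}}(A, Z) \times \ul{\cat{C}}(B, Z)$ and taking geometric realization (which sends Kan fibrations to Serre fibrations and weak equivalences to weak equivalences) gives that $(i_A^*, i_B^*)$ is a trivial fibration.

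The only genuinely non-formal step is the reduction ``$A \vee B \to A \times B$ is a weak equivalence for all $A, B \in \cat{T}_{\cat{G}}$'' from the hypothesis that it holds for $A, B \in \cat{G}$, i.e., the induction on the number of product factors; I expect this to be the main obstacle, since one must track cofibrancy carefully through the iterated wedges to be allowed to detect weak equivalences objectwise. One clean way to organize it: show by induction that for $A_1, \dots, A_k \in \cat{G}$ the iterated fold map $\bigvee_i A_i \to \prod_i A_i$ is a weak equivalence between cofibrant objects, by writing $\prod_{i=1}^{k} A_i = (\prod_{i=1}^{k-1} A_i) \times A_k$ and using the hypothesis for the pair $(\prod_{i=1}^{k-1} A_i, A_k)$ — but that requires $\prod_{i=1}^{k-1} A_i$ to be handled, so instead induct via the commuting diagram relating $\bigvee_i A_i$, $(\bigvee_{i<k} A_i) \vee A_k$, and $(\prod_{i<k} A_i) \times A_k$, invoking that a pushout of a weak equivalence between cofibrant objects along a cofibration is a weak equivalence, together with the inductive hypothesis on $\bigvee_{i<k} A_i \to \prod_{i<k} A_i$ and the two-fold hypothesis. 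Everything else is a direct transcription of the proof of Proposition~\ref{pr:WeaklyBilinear} with ``$\Spec$'' replaced by ``$\cat{C}$'' and ``abelian group object'' weakened to ``monoid object''.
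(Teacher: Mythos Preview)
Your approach is exactly the paper's: the paper gives no separate proof of this proposition, merely stating that it is ``extracted from the proof of Proposition~\ref{pr:WeaklyBilinear}.'' Everything you write about finite products, cofibrancy and fibrancy of objects in $\cat{T}_{\cat{G}}$, the topological monoid structure on mapping spaces, left linearity, and the deduction of the trivial-fibration property from the simplicial model category axiom SM7 is correct and is a direct transcription of the argument for Proposition~\ref{pr:WeaklyBilinear}.

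You have, however, put your finger on a genuine subtlety that the paper glosses over. Property~\eqref{item:TrivFib} must hold for all $A,B \in \cat{T}_{\cat{G}}$, which requires $\io \colon A \vee B \to A \times B$ to be a weak equivalence for all such $A,B$; the stated hypothesis only gives this for $A,B \in \cat{G}$. In the proof of Proposition~\ref{pr:WeaklyBilinear} this issue does not arise, because in $\Spec$ the map $\io$ is a weak equivalence for \emph{all} spectra, not just the generators. Your proposed induction does not close the gap: after reducing $\bigvee_{i=1}^{k} A_i \to \prod_{i=1}^{k} A_i$ to the composite
\[
\bigvee_{i=1}^{k} A_i \xrightarrow{\sim} \Big(\prod_{i<k} A_i\Big) \vee A_k \longrightarrow \Big(\prod_{i<k} A_i\Big) \times A_k,
\]
the second map is exactly an instance of the two-fold hypothesis applied to the pair $(\prod_{i<k} A_i,\, A_k)$, where the first factor lies in $\cat{T}_{\cat{G}}$ but not in $\cat{G}$. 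Your ``instead induct via the commuting diagram'' variant runs into the same circularity; the gluing lemma lets you move the trivial cofibration $\bigvee_{i<k} A_i \hookrightarrow \prod_{i<k} A_i$ past a wedge with $A_k$, but you still need the two-fold comparison for a pair outside $\cat{G}$ to reach the full product. Nothing in the hypotheses (pointed simplicial model category, the cofibration condition on $\io$, monoid structure) obviously forces $- \times A_k$ to preserve the relevant weak equivalences. So either one reads the hypothesis as ranging over $\cat{T}_{\cat{G}}$ rather than $\cat{G}$, or one adds a mild assumption (e.g.\ right properness, or that $- \times A$ preserves weak equivalences between cofibrant objects for $A \in \cat{T}_{\cat{G}}$) under which your induction goes through; in the intended examples this is automatic.
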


\begin{ex}
Proposition~\ref{pr:WeaklyBilinear} also holds for the integral Eilenberg--MacLane mapping theory, using $\Z$ instead of $\F_p$ as coefficient group.
\end{ex}

\section{Higher distributivity} \label{sec:HigherDist}

\subsection{Cubes in a space}

In this subsection, we fix some notation about cubes in a space or a topologically enriched category.

\begin{defn} \label{def:Cubes}
Let $X$ be a topological space.

An \Def{$n$-cube} in $X$ is a map $\ga \colon I^n \to X$, where $I = [0,1]$ is the unit interval. For example, a $0$-cube in $X$ is a point of $X$, and a $1$-cube in $X$ is a path in $X$. In this case, we also denote $\ga$ as an arrow $\ga \colon \ga(0) \to \ga(1)$.

An \Def{$n$-track} in $X$ is a homotopy class, relative to the boundary $\del I^n$, of an $n$-cube. If $\ga \colon I^n \to X$ is an $n$-cube in $X$, denote by $\{ \ga \}$ the corresponding $n$-track in $X$, namely the homotopy class of $\ga$ rel $\del I^n$.
\end{defn}

\begin{defn}
Let $X$ be a pointed space, with basepoint $0 \in X$. The constant map $0 \colon I^n \to X$ with value $0 \in X$ is called the \Def{trivial $n$-cube}.

\end{defn}

The equality $I^{m+n} = I^m \x I^n$ allows us to define an operation on cubes.

\begin{defn} \label{def:ProdCubes}
Let $\mu \colon X \x X' \to X''$ be a composition map in a $\Topp_*$-category $\cat{C}$. For $m,n \geq 0$, consider cubes
\[
\begin{cases}
a \colon I^m \to X \\
b \colon I^n \to X'. \\
\end{cases}
\]
The \Def{$\ot$-composition} of $a$ and $b$ is the $(m+n)$-cube $a \ot b$ defined as the composite
\begin{equation} \label{TensorCubes}
a \ot b \colon I^{m+n} = I^m \x I^n \ral{a \x b} X \x X' \ral{\mu} X''. 
\end{equation}

For $m=n$, the \Def{pointwise composition} of $a$ and $b$ is the $n$-cube defined as the composite
\begin{equation} \label{PointwiseCubes}
ab \colon I^{n} \ral{(a,b)} X \x X' \ral{\mu} X''. 
\end{equation}
The pointwise composition is the restriction of the $\ot$-composition along the diagonal:
\[
\xymatrix{
I^n \ar[r]^-{\De} \ar@/_2pc/[rr]_{ab} & I^n \x I^n \ar[r]^-{a \ot b} & X''.
}
\]
Similarly, let $X$ be a topological %
monoid. The \Def{external addition} of cubes $a \colon I^m \to X$ and $b \colon I^n \to X$ is the $(m+n)$-cube $a \op b$ defined as the composite
\begin{equation} \label{ExtAddCubes}
a \op b \colon I^{m+n} = I^m \x I^n \ral{a \x b} X \x X \ral{+} X. 
\end{equation}
For $m=n$, the \Def{pointwise addition} of $a$ and $b$ is the $n$-cube defined as the composite
\begin{equation} \label{PointwiseAdd}
a+b \colon I^{n} \ral{(a,b)} X \x X \ral{+} X. 
\end{equation}
As an abuse of notation, we will also write $xy := x \ot y$ and $x+y := x \op y$ if $\deg(x) = 0$ or $\deg(y) = 0$ holds.
\end{defn}

\begin{lem} \label{lem:LeftLinCubes}
Let $\cat{T}$ be a left linear $\Topp_*$-category. %
Then the $\ot$-composition with a $0$-cube is left linear with respect to the external addition. More precisely, consider cubes in a mapping space in $\cat{T}$
\[
\begin{cases}
a \colon I^m \to \cat{T}(A,B) \\
b \colon I^n \to \cat{T}(A,B) \\
\end{cases}
\]
and a map $x \colon X \to A$. %
Then the equality
\[
(a \op b) x = (a x) \op (b x)
\]
holds, where both sides are $(m+n)$-cubes in $\cat{T}(X,B)$.
\end{lem}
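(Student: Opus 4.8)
The plan is to unwind both sides of the claimed equality as honest maps out of the cube $I^{m+n}$ and check that they are literally the same composite, using only the definitions in Definition~\ref{def:ProdCubes} together with the strict left linearity of $\cat{T}$ encoded in Definition~\ref{def:LeftLin}(b). First I would fix the identification $I^{m+n} = I^m \x I^n$ once and for all, and write a generic point of the cube as $(s,t)$ with $s \in I^m$ and $t \in I^n$. By definition of $\op$, the cube $a \op b \colon I^{m+n} \to \cat{T}(A,B)$ sends $(s,t)$ to $a(s) + b(t)$, where $+$ is the addition in the topological monoid $\cat{T}(A,B)$. Since $x$ has degree $0$, the $\ot$-composition with $x$ is by the stated abuse of notation just post-composition $\mu(\place, x)$, so $(a \op b)x$ sends $(s,t)$ to $(a(s) + b(t)) \cdot x$ in $\cat{T}(X,B)$.

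Next I would compute the right-hand side pointwise in the same coordinates: $ax \colon I^m \to \cat{T}(X,B)$ sends $s$ to $a(s) \cdot x$, and $bx \colon I^n \to \cat{T}(X,B)$ sends $t$ to $b(t) \cdot x$; hence $(ax) \op (bx)$ sends $(s,t)$ to $a(s)\cdot x + b(t) \cdot x$, the addition now taking place in $\cat{T}(X,B)$. The two resulting maps $I^{m+n} \to \cat{T}(X,B)$ therefore agree if and only if $(u + v)x = ux + vx$ for all $u,v \in \cat{T}(A,B)$, which is exactly the left linearity axiom of Definition~\ref{def:LeftLin}(b). Since composition and addition are continuous, both sides are continuous maps $I^{m+n} \to \cat{T}(X,B)$, so pointwise equality suffices. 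A cleaner way to package this, which I would actually write down, is diagrammatic: the left-hand side is the composite
\[
I^m \x I^n \xrightarrow{a \x b} \cat{T}(A,B) \x \cat{T}(A,B) \xrightarrow{+} \cat{T}(A,B) \xrightarrow{(\place)x} \cat{T}(X,B),
\]
while the right-hand side is
\[
I^m \x I^n \xrightarrow{a \x b} \cat{T}(A,B) \x \cat{T}(A,B) \xrightarrow{(\place)x \,\x\, (\place)x} \cat{T}(X,B) \x \cat{T}(X,B) \xrightarrow{+} \cat{T}(X,B),
\]
and these agree because the square expressing left linearity,
\[
\xymatrix @C=4pc {
\cat{T}(A,B) \x \cat{T}(A,B) \ar[r]^-{+} \ar[d]_{(\place)x \,\x\, (\place)x} & \cat{T}(A,B) \ar[d]^{(\place)x} \\
\cat{T}(X,B) \x \cat{T}(X,B) \ar[r]^-{+} & \cat{T}(X,B),
}
\]
commutes: that commutativity is precisely $(u+v)x = ux + vx$.

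There is essentially no obstacle here; the only thing requiring a moment's care is bookkeeping the identification $I^{m+n} = I^m \x I^n$ consistently with the convention in~\eqref{ExtAddCubes}, and noting that the degree-zero abuse of notation $ax = a \ot x$ means ``apply $\mu(\place,x)$ pointwise'' rather than introducing an extra interval factor, so that $a \op b$ and $(a\op b)x$ are cubes of the same dimension $m+n$. Once that is pinned down, the statement is the naturality of the external addition $\op$ under post-composition by a degree-$0$ map, and this naturality is literally the left linearity axiom. I would present the diagrammatic version as the proof, remarking that it is the evident ``cube-level'' upgrade of property~\eqref{item:LeftLin} of Proposition~\ref{pr:WeaklyBilinear} and that no commutativity or inverses in the mapping monoids are used.
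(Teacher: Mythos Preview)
Your proof is correct. The paper itself omits the proof of this lemma entirely, treating it as immediate from the definitions; your unwinding via the commuting left-linearity square is precisely the routine verification the authors had in mind.
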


\subsection{Definition of higher distributivity}

Before defining higher distributivity in general, let us look at some low-dimensional cases.

\begin{defn} \label{def:2Distrib}
Let $\cat{T}$ be a left linear $\Topp_*$-category. 
Then $\cat{T}$ is called \Def{$1$-distributive} if for all $a,x,y \in \cat{T}$, there is a path
\[
\begin{tikzpicture}[scale=2]
\draw [->-=0.5] (0,0) -- (2,0);
\draw [fill=\VertexColor] (0,0) circle (\VertexSize);
\node [below left] at (0,0) {$a(x + y)$};
\draw [fill=\VertexColor] (2,0) circle (\VertexSize);
\node [below right] at (2,0) {$ax + ay$.};
\node [below, text=\EdgeColor] at (1,0) {$\phy_a^{x,y}$};
\end{tikzpicture}
\]
in $\cat{T}$. A choice of such paths for $a,x,y \in \cat{T}$ is denoted $\phy^1 = \left\{ \phy_a^{x,y} \mid a,x,y \in \cat{T} \right\}$ and is called a \Def{$1$-distributor} for $\cat{T}$. Per Notation~\ref{nota:OmitObjects}, here we mean for all $a,x,y \in \cat{T}$ such that $a(x+y)$ is defined. Also, $\phy^1$ is required to be continuous in the inputs $a,x,y$. More precisely, for all objects $X,A,B$ of $\cat{T}$, the map
\[
\xymatrix @R=0.1pc {
\cat{T}(A,B) \x \cat{T}(X,A)^2 \ar[r]^-{\phy^1} & \cat{T}(X,B)^{I} \\
(a,x,y) \ar@{|->}[r] & \phy_a^{x,y} \\
}
\]
is continuous.

Next, $\cat{T}$ is called \Def{$2$-distributive} if it admits a $1$-distributor $\phy^1$ such that for all $a,x,y,z \in \cat{T}$, the map $\del I^2 \to \cat{T}$ defined as in the diagram of paths
\[
\begin{tikzpicture}[scale=1.5]
\draw [->-=0.5] (0,0) -- (0,2);
\draw [->-=0.5] (0,2) -- (2,2);
\draw [->-=0.5] (0,0) -- (2,0);
\draw [->-=0.5] (2,0) -- (2,2);
\draw [fill=\VertexColor] (0,0) circle (\VertexSize);
\node [below left] at (0,0) {$a(x + y + z)$};
\draw [fill=\VertexColor] (0,2) circle (\VertexSize);
\node [above left] at (0,2) {$a(x + y) + az$};
\draw [fill=\VertexColor] (2,0) circle (\VertexSize);
\node [below right] at (2,0) {$ax + a(y + z)$.};
\draw [fill=\VertexColor] (2,2) circle (\VertexSize);
\node [above right] at (2,2) {$ax + ay + az$};
\node [below, text=\EdgeColor] at (1,0) {$\phy_a^{x, y + z}$};
\node [above, text=\EdgeColor] at (1,2) {$\phy_a^{x, y} + az$};
\node [left, text=\EdgeColor] at (0,1) {$\phy_a^{x + y, z}$};
\node [right, text=\EdgeColor] at (2,1) {$ax + \phy_a^{y, z}$};
\end{tikzpicture}
\]
admits an extension $\phy_a^{x,y,z} \colon I^2 \to \cat{T}$. A choice of such $2$-cubes for $a,x,y,z \in \cat{T}$ is denoted \[
\phy^2 = \left\{ \phy_a^{x,y,z} \mid a,x,y,z \in \cat{T} \right\}
\]
and is called a \Def{$2$-distributor} for $\cat{T}$, \Def{based} on the $1$-distributor $\phy^1$. As before, the $2$-distributor $\phy^2$ is required to be continuous in the inputs $a,x,y,z \in \cat{T}$.

\end{defn}

\begin{defn} \label{def:nDistrib}
A left linear $\Topp_*$-category $\cat{T}$ is called \Def{$n$-distributive} if there are collections of cubes $\phy^0, \phy^1, \ldots, \phy^n$, where
\[
\phy^m = \{ \phy_a^{x_0, \ldots, x_m} \mid a, x_0, \ldots, x_m \in \cat{T} \}
\]
is a collection of $m$-cubes $\phy_a^{x_0, \ldots, x_m} \colon I^m \to \cat{T}$, satisfying the following:
\begin{itemize}
\item $\phy^0$ is the collection of $0$-cubes $\phy_a^x = ax$. %
\item For $1 \leq m \leq n$, the following boundary conditions hold:
\begin{equation} \label{eq:Boundary}
\resizebox{1.0\textwidth}{!}{
$\phy_a^{x_0, \ldots, x_m}(t_1, \ldots, t_m) = \begin{cases}
\phy_a^{x_0, \ldots, x_{j-1} + x_j, \ldots, x_m}(t_1, \ldots, \widehat{t_j}, \ldots, t_m) &\text{if } t_j = 0 \\
\phy_a^{x_0, \ldots, x_{j-1}}(t_1, \ldots, \ldots, t_{j-1}) \op \phy_a^{x_j, \ldots, x_m}(t_{j+1}, \ldots, t_m) &\text{if } t_j = 1. \\
\end{cases}$
}
\end{equation}
\end{itemize}
Such a collection $\phy^n$ of $n$-cubes in $\cat{T}$ is called an \Def{$n$-distributor} for $\cat{T}$, \Def{based} on the $(n-1)$-distributor $\phy^{n-1}$. The $n$-distributor $\phy^n$ is required to be continuous in the inputs $a, x_0, \ldots, x_n \in \cat{T}$. More precisely, for all objects $X,A,B$ of $\cat{T}$, the map
\[
\xymatrix @R=0.1pc {
\cat{T}(A,B) \x \cat{T}(X,A)^{n+1} \ar[r]^-{\phy^n} & \cat{T}(X,B)^{I^n} \\
(a,x_0, \ldots, x_n) \ar@{|->}[r] & \phy_a^{x_0, \ldots, x_n} \\
}
\]
is continuous. Note that the case $n=2$ agrees with Definition~\ref{def:2Distrib}.

The case $n = \infty$ is allowed: An \Def{$\infty$-distributor} for $\cat{T}$ is a sequence $\{ \phy^m \}_{m \geq 0}$ of families of cubes satisfying the above conditions, for all $m \geq 0$. 
\end{defn}

This definition is closely related to the notion of $A_{\infty}$ morphisms. The connection is described more precisely in Section~\ref{sec:Pushforward}.

\begin{rem} \label{rem:HighestDim}
In the data of an $n$-distributor, we only retain the $n$-cubes $\phy^n$, since the lower dimensional cubes $\phy^k$ (for $0 \leq k \leq n-1$) are determined by the boundary condition
\begin{align*}
\phy_a^{x_0, \ldots, x_{m-1}, 0}(t_1, \ldots, t_{m-1}, 0) &= \phy_a^{x_0, \ldots, x_{m-1} + 0}(t_1, \ldots, t_{m-1}) \\
&= \phy_a^{x_0, \ldots, x_{m-1}}(t_1, \ldots, t_{m-1}).
\end{align*}
\end{rem}

\subsection{Inductive construction of distributors}

Applying the boundary conditions of $\phy^n$  in Equation~\eqref{eq:Boundary} repeatedly, one can find the restriction $\phy^n \vert_{C} \colon C \to \cat{T}$ to any face $C \subseteq I^n$ of dimension less than $n$. We now describe this formula explicitly.

\begin{nota} \label{nota:CubesFaces}
Let $n \geq 1$. The cells (or subcubes) of the cube $I^n$ consist of the subsets $C \subseteq I^n$ of the form
\[
X_1 \x \ldots \x X_n \subseteq I^n
\]
where each $X_i$ is $\{0\}$ or $\{1\}$ or $I = [0,1]$. These cells are in bijection with functions $\si \colon \{ 1, \ldots, n \} \to \{ 0, 1, I \}$, which we call \Def{codes} for convenience, and 
sometimes write as a sequence of values $\left( \si(1), \ldots, \si(n) \right)$. 
Denote by $C_{\si}$ the cell corresponding to $\si$.%
\end{nota}

The subcube $C_{\si}$ has dimension $\abs{ \si^{-1}(I) }$, the number of free coordinates. Because of this, we also denote $\dim \si \dfn \abs{ \si^{-1}(I) }$. The entire cube is $I^n = C_{(I, \ldots, I)}$, while its boundary is the union of proper faces
\[
\del I^n = \bigcup_{\substack{\si \in \{0,1,I\}^n\\ \si \neq (I, \ldots, I)}} C_{\si}.
\] 
The vertices of the cube $I^n$ will parametrize different ways of going from $a(x_0 + \ldots + x_n)$ to $ax_0 + \ldots + ax_n$ by distributing the product over the sums. These expressions contain $n$ symbols $+$, and we will interpret the value $\si(i)$ as telling whether the $i^{\text{th}}$ symbol $+$ has been brought outside, with $1$ or $0$ meaning yes or no, respectively. For example, $(1,0,0,1)$ corresponds to $ax_0 + a(x_1 + x_2 + x_3) + ax_4$. 

\begin{ex}
Consider the case $n=2$. 
The subcubes of the cube $I^2$ are assigned distributors as in Figure~\ref{fig:Distrib2Cube}.

\begin{figure}[ht]
\begin{tikzpicture}[scale=1.8]
\draw [->-=0.5] (0,0) -- (0,2);
\draw [->-=0.5] (0,2) -- (2,2);
\draw [->-=0.5] (0,0) -- (2,0);
\draw [->-=0.5] (2,0) -- (2,2);
\draw [fill=\VertexColor] (0,0) circle (\VertexSize);
\node [below left] at (0,0) {$a(x_0 + x_1 + x_2)$};
\draw [fill=\VertexColor] (0,2) circle (\VertexSize);
\node [above left] at (0,2) {$a(x_0 + x_1) + ax_2$};
\draw [fill=\VertexColor] (2,0) circle (\VertexSize);
\node [below right] at (2,0) {$ax_0 + a(x_1 + x_2)$.};
\draw [fill=\VertexColor] (2,2) circle (\VertexSize);
\node [above right] at (2,2) {$ax_0 + ax_1 + ax_2$};
\node [below, text=\EdgeColor] at (1,0) {$\phy_a^{x_0,x_1 + x_2}$};
\node [above, text=\EdgeColor] at (1,2) {$\phy_a^{x_0,x_1} + ax_2$};
\node [left, text=\EdgeColor] at (0,1) {$\phy_a^{x_0 + x_1, x_2}$};
\node [right, text=\EdgeColor] at (2,1) {$ax_0 + \phy_a^{x_1, x_2}$};
\node [text=\FaceColor] at (1,1) {$\phy_a^{x_0, x_1, x_2}$};
\end{tikzpicture}
\caption{The distributors assigned to subcubes of a $2$-cube.}
\label{fig:Distrib2Cube}
\end{figure}
If a $1$-distributor $\phy^1$ is given, then this defines the obstruction map $\obs{\phy^1} \colon \del I^2 \to \cat{T}$ of Definition~\ref{def:nObstruc}.

Note that the product $ax_i$ is itself the $0$-distributor $\phy_a^{x_i}$. Also note that the pointwise sum of $0$-cubes $ax_0 + ax_1$ is also an external sum $ax_0 \oplus ax_1$, and the expression $\phy_a^{x_0,x_1} + ax_2$ is shorthand notation for the external sum $\phy_a^{x_0,x_1} \oplus ax_2$.
\end{ex}

\begin{defn} \label{def:FaceFormula}
Let $n \geq 1$ and let $\phy^{m}$ be an $m$-distributor for $\cat{T}$. Let $C_{\si} \subseteq I^n$ be a proper subcube of dimension $\dim \si \leq m$. The \Def{face formula} associates to each $a, x_0, \ldots, x_n \in \cat{T}$ (such that the expression $a(x_0 + \ldots + x_n)$ is defined) a map
\[
\phy^{m}[\si]_a^{x_0, \ldots, x_n} \colon C_{\si} \to \cat{T}
\]
as follows.

\emph{Step 1}: By convention, extend the code $\si$ by $\si(0) = 1$. Let
\[
\{ 0, 1, \ldots, n \} = J_0 \sqcup J_1 \sqcup \ldots \sqcup J_t
\]
be the partition into intervals satisfying
\begin{equation} \label{eq:OnlyOne1}
\si \vert_{J_k}(i) = \begin{cases}
1 &\text{if } i = \min J_k \\
0 \text{ or } I &\text{if } i \neq \min J_k. \\ 
\end{cases}
\end{equation}
In particular, $t = \abs{\si^{-1}(1)}$ is the number of $1$'s in the code. We define
\[
\phy^{m}[\si] \dfn \phy^{m} \left[ \si\vert_{J_0} \right] \op \phy^{m} \left[ \si\vert_{J_1} \right] \op \ldots \op \phy^{m} \left[ \si\vert_{J_t} \right].
\]

\emph{Step 2}: For an interval of integers $J$ satisfying Equation~\eqref{eq:OnlyOne1}, let
\[
J = K_0 \sqcup K_1 \sqcup \ldots \sqcup K_d
\]
be the partition into intervals satisfying
\[
\si \vert_{K_l}(i) = \begin{cases}
1 \text{ or } I &\text{if } i = \min K_l \\
0 &\text{if } i \neq \min K_l. \\ 
\end{cases}
\]
In particular, $d = \abs{\si^{-1}(I) \cap J}$ is the number of $I$'s in $\si\vert_J$. We define
\[
\phy^{m} \left[ \si\vert_J \right] \dfn \phy_{a}^{x_{K_0}, x_{K_1}, \ldots, x_{K_d}}
\]
where we denote $x_K \dfn \sum_{k \in K} x_k$ for any set of integers $K$, keeping the order of the inputs $x_k$.
\end{defn}

\begin{ex}
With $n = 8$ and the code $\si = 0I11I00I$, we have:
\begin{align*}
J_0 &= \{ 0,1,2 \} = \{ 0,1 \} \sqcup \{ 2 \} \\
J_1 &= \{ 3 \} \\
J_2 &= \{ 4,5,6,7,8 \} = \{ 4 \} \sqcup \{ 5, 6, 7 \} \sqcup \{ 8 \}.
\end{align*}
The face formula yields
\begin{align*}
\phy^{m}[\si] &= \phy^{m} \left[ \si\vert_{J_0} \right] \op \phy^{m} \left[ \si\vert_{J_1} \right] \op \phy^{m} \left[ \si\vert_{J_2} \right] \\
&= \phy_a^{x_0 + x_1, x_2} \op \phy_a^{x_3} \op \phy_a^{x_4, x_5 + x_6 + x_7, x_8}
\end{align*}
which is defined as long as $m \geq 3 = \dim \si$ holds.
\end{ex}

\begin{lem} \label{lem:FaceCompat}
A (continuous) collection $\phy^n$ of cubes $I^n \to \cat{T}$ is an $n$-distributor if and only if it satisfies $\phy^{n} \vert_{C_{\si}} = \phy^{n} [\si]$ for every subcube $C_{\si} \subseteq I^n$.
\end{lem}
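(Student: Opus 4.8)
The plan is to prove both directions by translating the boundary conditions \eqref{eq:Boundary} into statements about restrictions to codimension-one faces, and then iterating. For the ``only if'' direction, I would argue by downward induction on $\dim \si$. The top case $\dim \si = n$ is the whole cube, where $\phy^n[(I,\dots,I)] = \phy_a^{x_0,\dots,x_n} = \phy^n\vert_{I^n}$ by definition of the face formula (Step 2 with $J = \{0,\dots,n\}$, $t = 0$). For a proper face $C_\si$, pick a coordinate $j$ with $\si(j) \in \{0,1\}$, and let $\si'$ be the code obtained by changing $\si(j)$ to $I$, so $C_\si \subseteq C_{\si'}$ is the face $\{t_j = \si(j)\}$ and $\dim \si' = \dim \si + 1$. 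By the inductive hypothesis, $\phy^n\vert_{C_{\si'}} = \phy^n[\si']$. Then restricting further to $t_j = \si(j)$: on the left this is $\phy^n\vert_{C_\si}$, and on the right one must check that $\phy^n[\si']$ restricted to $t_j = \si(j)$ equals $\phy^n[\si]$. This is exactly the content of \eqref{eq:Boundary}: setting $t_j = 0$ merges $x_{j-1}$ and $x_j$ in the appropriate block (which is how Step 2 reads the code $\si$ versus $\si'$ on the interval containing $j$), while setting $t_j = 1$ splits off an external summand (which is how Step 1 reads the extra $1$ in $\si$). One has to bookkeep the case distinctions carefully — whether $j = \min$ of its block, whether $j$ is adjacent to other $0$'s or $I$'s — but in each case the face formula's two-step partition transforms in precisely the way dictated by \eqref{eq:Boundary}, using also Lemma~\ref{lem:LeftLinCubes} to see that $\op$ of cubes is compatible with restriction.

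For the ``if'' direction, suppose $\phy^n\vert_{C_\si} = \phy^n[\si]$ for all proper $\si$. I need to produce the collections $\phy^0, \dots, \phy^{n-1}$ and verify \eqref{eq:Boundary} for all $1 \le m \le n$. Define $\phy^m$ on a cube $I^m$ by viewing $I^m$ as the face $C_\si$ of $I^n$ with $\si = (I,\dots,I,0,\dots,0)$ ($m$ copies of $I$), and setting $\phy_a^{x_0,\dots,x_m} \dfn \phy^n\vert_{C_\si}$; by hypothesis this equals $\phy^n[\si] = \phy_a^{x_0,\dots,x_m}$ in the notation where the trailing $x_{m+1},\dots,x_n$ have been merged into $x_m$ — but since the inputs $x_0,\dots,x_m$ range over all of $\cat{T}$, this is a well-defined collection of $m$-cubes, and it is continuous since $\phy^n$ is and restriction to a face is continuous. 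The case $m = 0$ gives $\phy_a^x = ax$ as required. Then \eqref{eq:Boundary} for this $\phi^m$ is obtained by restricting the equality $\phy^n\vert_{C_{\si'}} = \phy^n[\si']$ (for $\si'$ an appropriate face code of dimension $m$) to one more coordinate and reading off the face formula, exactly the reverse of the computation in the first paragraph.

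The main obstacle is purely combinatorial: making the correspondence between the recursive clauses of \eqref{eq:Boundary} and the non-recursive two-step partition in Definition~\ref{def:FaceFormula} airtight, in particular checking that iterating \eqref{eq:Boundary} along several coordinates in different orders yields the same answer (well-definedness of $\phy^n[\si]$ as a description of the face), and that the ``$\op$'' appearing in Step 1 matches the external-addition clause ($t_j = 1$) of \eqref{eq:Boundary} including the left-linearity needed to commute $\op$ past the outer multiplication — this is where Lemma~\ref{lem:LeftLinCubes} enters. Once the case analysis for a single coordinate is pinned down, the induction on $\dim \si$ (respectively on $m$) is routine.
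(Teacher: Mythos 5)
The paper does not actually provide a proof of this lemma: the text immediately preceding Notation~\ref{nota:CubesFaces} asserts that iterating the boundary conditions~\eqref{eq:Boundary} gives the restriction to every face, and Lemma~\ref{lem:FaceCompat} merely records that the face formula of Definition~\ref{def:FaceFormula} is the resulting closed form. Your downward induction on $\dim\si$ (for the ``only if'' direction) and your extraction of $\phy^0,\dots,\phy^{n-1}$ from the face coded by $(I,\dots,I,0,\dots,0)$, following Remark~\ref{rem:HighestDim} (for the ``if'' direction), are the natural way to make this precise and presumably what the authors had in mind. The key combinatorial observation --- that switching a single entry $\si(j)$ from $I$ to $0$ merges two consecutive $K$-blocks inside one interval in Step~2 (so that $x_{K_{l-1}}+x_{K_l}=x_{K_{l-1}\cup K_l}$), while switching it from $I$ to $1$ creates a new $J$-block in Step~1 and hence a new $\op$-summand, matching exactly the two clauses of~\eqref{eq:Boundary} applied to the $d$-cube $\phy_a^{x_{K_0},\dots,x_{K_d}}$ --- is correctly identified in your sketch, and that is the entire content of the inductive step.

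One detail to retract: Lemma~\ref{lem:LeftLinCubes} is not needed here and your reference to an ``outer multiplication'' is a red herring. Both the face formula and the boundary conditions involve only the external addition $\op$ of distributor cubes, with no precomposition by a $0$-cube, so there is no $\ot$-composition to commute past $\op$; the two occurrences of $\op$ match on the nose. (Left linearity of $\ot$-composition against $\op$ does become relevant a few steps later, in Lemma~\ref{lem:UnivObstruc} and Corollary~\ref{cor:UnivSuffices}, once distributors are precomposed with $(x_0,\dots,x_n)$.) You might also note, when setting up the ``if'' direction, that the hypothesis $\phy^n\vert_{C_\si}=\phy^n[\si]$ for all $\si$ is what forces the various possible face-extractions of $\phy^m$ from $\phy^n$ to agree, which is needed for the right-hand side $\phy^n[\si]$ to be unambiguously defined from the bare collection of $n$-cubes.
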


\begin{defn} \label{def:nObstruc}
Let $\phy^{n-1}$ be an $(n-1)$-distributor for $\cat{T}$. The \Def{obstruction to $n$-distributivity} %
is the collection of maps
\[
\obs{\phy^{n-1}}_a^{x_0, \ldots, x_n} \colon \del I^n \to \cat{T}
\]
defined by their restriction to each face $C_{\si} \subset \del I^n$: 
\[
\obs{\phy^{n-1}} \vert_{C_{\si}} \dfn \phy^{n-1} [\si].
\]
It follows from the face formula that $\phy^{n-1}[\si]$ and $\phy^{n-1}[\si']$ agree on the intersection $C_{\si} \cap C_{\si'}$, so that the map $\obs{\phy^{n-1}} \colon \del I^n \to \cat{T}$ is well-defined.
\end{defn}

\begin{ex}
Consider the case $n=3$. 
The subcubes are assigned distributors as in Figure~\ref{fig:Distrib3Cube}.

\begin{figure}[ht]
\hspace*{-1.3cm}
\begin{tikzpicture}[scale=1.7]
\draw [fill=\VertexColor] (0,0) circle (\VertexSize);
\node [below left] [fill=white] at (0,0) {$a(x_0 + x_1 + x_2 + x_3)$};
\draw [fill=\VertexColor] (0,4) circle (\VertexSize);
\node [left] [fill=white] at (0,4) {$a(x_0 + x_1 + x_2) + ax_3$};
\draw [fill=\VertexColor] (4,0) circle (\VertexSize);
\node [below right] [fill=white] at (4,0) {$ax_0 + a(x_1 + x_2 + x_3)$};
\draw [fill=\VertexColor] (4,4) circle (\VertexSize);
\node [right] [fill=white] at (4,4) {$ax_0 + a(x_1 + x_2) + ax_3$};
\begin{pgfonlayer}{bg}
 \draw [fill=\VertexColor] (2,2) circle (\VertexSize);
 \node [left] [fill=white] at (2,2) {$a(x_0 + x_1) + a(x_2 + x_3)$};
\end{pgfonlayer}
\draw [fill=\VertexColor] (2,6) circle (\VertexSize);
\node [above left] [fill=white] at (2,6) {$a(x_0 + x_1) + ax_2 + ax_3$};
\draw [fill=\VertexColor] (6,2) circle (\VertexSize);
\node [right] [fill=white] at (6,2) {$ax_0 + ax_1 + a(x_2 + x_3)$};
\draw [fill=\VertexColor] (6,6) circle (\VertexSize);
\node [above right] [fill=white] at (6,6) {$ax_0 + ax_1 + ax_2 + ax_3$};
\draw [->-=0.5] (0,0) -- (4,0);
\node [text=\EdgeColor, fill=white] at (2,0-0.25) {$\phy_a^{x_0, x_1 + x_2 + x_3}$};
\draw [->-=0.4] (0,4) -- (4,4);
\node [text=\EdgeColor, fill=white] at (2-0.3,4+0.25) {$\phy_a^{x_0, x_1 + x_2} + ax_3$};
\draw [->-=0.4, fill=none] (0,0) -- (0,4);
\node [text=\EdgeColor, fill=white] at (0,2+0.4) {$\phy_a^{x_0 + x_1 + x_2, x_3}$};
\draw [->-=0.4] (4,0) -- (4,4);
\node [text=\EdgeColor, fill=white] at (4,2+0.4) {$ax_0 + \phy_a^{x_1 + x_2, x_3}$};
\begin{pgfonlayer}{bg}
 \draw [->-=0.4] (2,2) -- (6,2);
 \node [text=\EdgeColor] at (4-0.2,2-0.2) {$\phy_a^{x_0, x_1} + a(x_2 + x_3)$};
\end{pgfonlayer}
\draw [->-=0.4] (2,6) -- (6,6);
\node [above] [text=\EdgeColor] at (4,6) {$\phy_a^{x_0, x_1} + ax_2 + ax_3$};
\begin{pgfonlayer}{bg}
 \draw [->-=0.35] (2,2) -- (2,6);
 \node [text=\EdgeColor, fill=none] at (2+0.3,4-1.0) {$a(x_0 + x_1) + \phy_a^{x_2, x_3}$};
\end{pgfonlayer}
\draw [->-=0.35] (6,2) -- (6,6);
\node [text=\EdgeColor, fill=none] at (6+0.3,4-1.0) {$ax_0 + ax_1 + \phy_a^{x_2, x_3}$};
\begin{pgfonlayer}{bg}
 \draw [->-=0.35] (0,0) -- (2,2);
 \node [text=\EdgeColor, fill=white] at (1,1) {$\phy_a^{x_0 + x_1, x_2 + x_3}$};
\end{pgfonlayer}
\draw [->-=0.4] (0,4) -- (2,6);
\node [above] [text=\EdgeColor, fill=white] at (1,5) {$\phy_a^{x_0 + x_1, x_2} + ax_3$};
\draw [->-=0.35] (4,0) -- (6,2);
\node [text=\EdgeColor, fill=white] at (5,1) {$ax_0 + \phy_a^{x_1, x_2 + x_3}$};
\draw [->-=0.4] (4,4) -- (6,6);
\node [above] [text=\EdgeColor, fill=white] at (5,5) {$ax_0 + \phy_a^{x_1, x_2} + ax_3$};
\node [text=\FaceColor] at (2,2-\adjust) {$\phy_a^{x_0, x_1 + x_2, x_3}$};%
\node [text=\FaceColor] at (4,4+\adjust) {$\phy_a^{x_0, x_1} \op \phy_a^{x_2, x_3}$};%
\node [xslant=0.4, text=\FaceColor] at (3,1) {$\phy_a^{x_0, x_1, x_2+x_3}$};%
\node [xslant=0.4, text=\FaceColor] at (3,5) {$\phy_a^{x_0, x_1, x_2} + ax_3$};%
\node [yslant=0.6, text=\FaceColor] at (1,3) {$\phy_a^{x_0+x_1, x_2, x_3}$};%
\node [yslant=0.6, text=\FaceColor] at (5,3) {$ax_0 + \phy_a^{x_1, x_2, x_3}$};%
\end{tikzpicture}
\caption{The obstruction map $\obs{\phy^2} \colon \del I^3 \to \cat{T}$.}
\label{fig:Distrib3Cube}
\end{figure}
\end{ex}

We can now reinterpret higher distributivity as an inductive construction. %
The following lemma could be taken as an alternate to Definition~\ref{def:nDistrib}.

\begin{lem} \label{lem:nDistribInduc}
Le $n \geq 1$ and let $\cat{T}$ be a left linear $\Topp_*$-category. A (continuous) family of $n$-cubes in $\cat{T}$
\[
\phy^{n} = \left\{ \phy_a^{x_0, \ldots, x_n} \mid a, x_0, \ldots, x_n \in \cat{T} \right\}
\]
and is an $n$-distributor for $\cat{T}$ if and only if $\phy^{n-1}$ is an $(n-1)$-distributor for $\cat{T}$, and for all $a, x_0, x_1, \ldots, x_n \in \cat{T}$, the $n$-cube $\phy_a^{x_0, \ldots, x_n} \colon I^n \to \cat{T}$ extends the obstruction map from Definition~\ref{def:nObstruc}:
\[
\phy_a^{x_0, \ldots, x_n} \vert_{\del I^n} = \obs{\phy^{n-1}}_a^{x_0, \ldots, x_n} \colon \del I^n \to \cat{T}.
\]
In shorter notation: $\phy^n \vert_{\del I^n} = \obs{\phy^{n-1}}$.

Moreover, a sequence $(\phy^0, \phy^1, \phy^2, \ldots)$ is an $\infty$-distributor for $\cat{T}$ if and only if each $\phy^{n}$ is an $n$-distributor based on $\phy^{n-1}$.
\end{lem}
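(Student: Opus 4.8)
The plan is to prove Lemma~\ref{lem:nDistribInduc} by unwinding the two equivalent descriptions of an $n$-distributor: the boundary conditions of Equation~\eqref{eq:Boundary} from Definition~\ref{def:nDistrib}, and the face formula of Definition~\ref{def:FaceFormula} together with the obstruction map of Definition~\ref{def:nObstruc}. The essential content is already packaged in Lemma~\ref{lem:FaceCompat}, which says that a continuous family $\phy^n$ of $n$-cubes is an $n$-distributor precisely when $\phy^n \vert_{C_\si} = \phy^{n-1}[\si]$ for every \emph{proper} subcube $C_\si \subseteq I^n$ (the only subcube of dimension $n$ being $I^n = C_{(I,\ldots,I)}$ itself, which carries no constraint). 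So the first step is simply to observe that the collection of all proper subcubes of $I^n$ is exactly $\del I^n = \bigcup_{\si \neq (I,\ldots,I)} C_\si$, and that by Definition~\ref{def:nObstruc} the restriction $\obs{\phy^{n-1}} \vert_{C_\si}$ equals $\phy^{n-1}[\si]$. Hence the conjunction ``$\phy^n \vert_{C_\si} = \phy^{n-1}[\si]$ for every proper subcube'' is literally the single equation $\phy^n \vert_{\del I^n} = \obs{\phy^{n-1}}$.

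Next I would separate out the two halves of the equivalence. For the forward direction, if $\phy^n$ is an $n$-distributor then by Definition~\ref{def:nDistrib} each $\phy^m$ for $m \leq n-1$ is itself a distributor (this is how the definition is set up, with $\phy^n$ ``based on'' $\phy^{n-1}$), so in particular $\phy^{n-1}$ is an $(n-1)$-distributor; and by Lemma~\ref{lem:FaceCompat} applied to $\phy^n$ we get the face identities on all proper subcubes, which assemble to $\phy^n \vert_{\del I^n} = \obs{\phy^{n-1}}$ as just noted. For the converse, suppose $\phy^{n-1}$ is an $(n-1)$-distributor and $\phy^n \vert_{\del I^n} = \obs{\phy^{n-1}}$. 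Then for every proper subcube $C_\si$ we recover $\phy^n \vert_{C_\si} = \obs{\phy^{n-1}} \vert_{C_\si} = \phy^{n-1}[\si]$; since $\dim \si \leq n-1$ and $\phy^{n-1}$ is a genuine $(n-1)$-distributor, the face formula $\phy^{n-1}[\si]$ agrees with $\phy^{n-2}[\si], \phy^{n-3}[\si], \ldots$ (restricting further), so $\phy^n$ together with the lower $\phy^m$ satisfies the hypothesis of Lemma~\ref{lem:FaceCompat}, and $\phy^n$ is an $n$-distributor. One should remark here that the boundary condition Equation~\eqref{eq:Boundary} is exactly the case of Lemma~\ref{lem:FaceCompat} for the codimension-one faces $\si = (I,\ldots,I, 0, I, \ldots, I)$ and $\si = (I, \ldots, I, 1, I, \ldots, I)$, and that Lemma~\ref{lem:FaceCompat} shows these codimension-one conditions already force all the higher-codimension ones; this is why checking agreement on $\del I^n$ suffices and no separate compatibility on lower faces must be imposed.

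Finally, for the ``moreover'' clause about $\infty$-distributors, I would simply note that Definition~\ref{def:nDistrib} defines an $\infty$-distributor to be a sequence $\{\phy^m\}_{m \geq 0}$ such that the displayed conditions hold for every $m \geq 0$; unwinding, that is exactly the statement that for each $n \geq 1$ the truncation $(\phy^0, \ldots, \phy^n)$ is an $n$-distributor, equivalently (by the first part of the lemma, applied at each level) that each $\phy^n$ is an $n$-distributor based on $\phy^{n-1}$. This is a formal bookkeeping argument with no real content beyond the already-established equivalence at each finite stage.

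The main obstacle, such as it is, is not logical but notational: one has to be careful that ``$\phy^{n-1}[\si]$'' as it appears in the face formula of Definition~\ref{def:FaceFormula} and in the obstruction map of Definition~\ref{def:nObstruc} is literally the same map, and that restricting a distributor further down the dimension (from $\phy^{n-1}[\si]$ to, say, $\phy^{n-2}$ restricted to a subface of $C_\si$) is consistent — but this consistency is precisely the ``well-defined'' remark at the end of Definition~\ref{def:nObstruc} and the content of Lemma~\ref{lem:FaceCompat}, so it can be cited rather than re-derived. Continuity is inherited on both sides because restriction to a subcube and external addition of cubes are continuous operations, so the continuity clause transfers for free.
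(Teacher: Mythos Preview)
Your proposal is correct and in fact more detailed than the paper's own treatment: the paper states Lemma~\ref{lem:nDistribInduc} without proof, treating it as a direct reformulation of Definition~\ref{def:nDistrib} in light of Lemma~\ref{lem:FaceCompat} and Definition~\ref{def:nObstruc}. Your unwinding via Lemma~\ref{lem:FaceCompat}, the observation that proper subcubes exhaust $\del I^n$, and the bookkeeping for the $\infty$-case are exactly the intended content.
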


\section{Good distributors} \label{sec:GoodDistrib}

In this section, we describe additional conditions on a distributor that will be satisfied in a weakly bilinear mapping theory.

For every $a, x_0, \ldots, x_n \in \cat{T}$, the two maps being compared in the distributivity equation, namely $a(x_0 + \ldots + x_n)$ and $ax_0 + \ldots + ax_n$, factor through $(x_0, \ldots, x_n) \colon X \to A^{n+1}$, as illustrated in the diagram
\[
\xymatrix @C=3.5pc {
X \ar[dr]_-{x_0 + \ldots + x_n} \ar[r]^-{(x_0, \ldots, x_n)} & A^{n+1} \ar[d]^{+_A} \ar[r]^-{a^{n+1}} & B^{n+1} \ar[d]^{+_B} \\
& A \ar[r]_-{a} & B. \\
}
\]
Hence, for fixed $a \in \cat{T}$, there is a universal case to consider, with $(x_0, \ldots, x_n) = \id_{A^{n+1}}$. In other words, take $x_i = p_i \colon A^{n+1} \to A$, where the maps $p_0, \ldots, p_n \colon A^{n+1} \to A$ denote the projections onto the factors. %

Also, the equalities $a(x+0) = a(0+x) = ax$ hold strictly in $\cat{T}$. More generally, given inputs $x_0, \ldots, x_n$ with $x_i = 0$ for $i \neq k$, then the equality $a(x_0 + \ldots + x_n) = a x_k$ holds. In other words, for all maps $a \colon A \to B$ and $x \colon X \to A$, and index $0 \leq k \leq n$, consider the diagram in $\cat{T}$
\[
\xymatrix @C=3.5pc {
X \ar[d]_{x} \ar[r]^-{(0,\ldots,\overset{k^{\text{th}}}{x},\ldots,0)} & A^{n+1} \ar[d]^{+_A} \ar[r]^-{a^{n+1}} & B^{n+1} \ar[d]^{+_B} \\
A \ar[ur]^{i_k} \ar@{=}[r] & A \ar[r]_-{a} & B. \\
}
\]
The left half commutes, but the right square does \emph{not} commute. However, the large rectangle does commute, as both composites are equal to $ax \colon X \to B$. 
These observations lead to the following:

\begin{defn} \label{def:Good}
An $n$-distributor $\phy^n$ %
is \Def{good} if it satisfies the following properties.
\begin{enumerate}
\item \label{eq:Universality} (\emph{Universality}) For all $a, x_0, \ldots, x_n \in \cat{T}$, the equality 
\begin{equation} \label{eq:NaturalCube}
\phy_a^{x_0, \ldots, x_n} = \phy_a^{p_0, \ldots, p_n} \ot (x_0, \ldots, x_n)
\end{equation}
holds. Both sides are $n$-cubes in $\cat{T} \left( X, B \right)$.
\item \label{eq:Wedge} (\emph{Wedge condition}) For all maps $a \colon A \to B$ and $x \colon X \to A$ in $\cat{T}$, and index $0 \leq k \leq n$, the cube
\[
\phy_a^{0,\ldots,x,\ldots,0} \colon I^n \to \cat{T}(X,B)
\]
is the constant $n$-cube at $ax$.
\end{enumerate}
An $\infty$-distributor $\phy = (\phy^0, \phy^1, \phy^2, \ldots)$ is called \emph{good} if $\phy^n$ is good for all $n \geq 0$.
\end{defn}

\begin{rem}
If $\cat{T}$ happened to come from an ambient model category $\cat{C}$, then the restriction
\[
(i_0^*, \ldots, i_n^*) \colon \cat{T}(A^{n+1},B) \to \cat{T}(A,B)^{n+1} \cong \cat{C}(\bigvee_{i=0}^n A,B)
\]
corresponds to restriction along the inclusion of the wedge $\bigvee_{i=0}^n A \inj A^{n+1}$. The wedge condition says that no correction is needed when we restrict to the wedge.
\end{rem}

Note that if $\phy^{n}$ is good, then $\phy^{n-1}$ is automatically good as well. Also note that the $0$-distributor $\phy^0$ is good, trivially.

\begin{lem} \label{lem:UnivWedge}
Let $\phy^{n}$ be an $n$-distributor satisfying the universality condition. Then $\phy^n$ satisfies the wedge condition if and only if for every map $a \colon A \to B$ in $\cat{T}$, the composite
\[
\xymatrix @C=3.5pc {
I^n \ar[r]^-{\phy_a^{p_0, \ldots, p_n}} & \cat{T} (A^{n+1}, B) \ar[r]^-{(i_0^*, \ldots, i_n^*)} & \cat{T} (A, B)^{n+1}
}
\]
is the constant $n$-cube at $(a, \ldots, a)$.
\end{lem}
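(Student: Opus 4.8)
The plan is to unwind both the universality and wedge conditions on the distinguished input $(0,\ldots,x,\ldots,0)$ and compare them with the claimed statement about the composite $(i_0^*,\ldots,i_n^*)\circ\phy_a^{p_0,\ldots,p_n}$. The key observation is that the inclusion $i_k\colon A\to A^{n+1}$ is exactly the map $(0,\ldots,\overset{k}{1_A},\ldots,0)$, so that the restriction $i_k^*$ applied to a cube $c\colon I^n\to\cat{T}(A^{n+1},B)$ is, by the definition of pointwise/$\ot$-composition with a $0$-cube (Definition~\ref{def:ProdCubes}), the cube $c\ot i_k\colon I^n\to\cat{T}(A,B)$. Using the universality equation~\eqref{eq:NaturalCube} with inputs $x_j=0$ for $j\neq k$ and $x_k=1_A$, namely
\[
\phy_a^{0,\ldots,1_A,\ldots,0} = \phy_a^{p_0,\ldots,p_n}\ot (0,\ldots,1_A,\ldots,0) = \phy_a^{p_0,\ldots,p_n}\ot i_k,
\]
one sees that the $k^{\text{th}}$ component of the composite in the lemma, namely $i_k^*\circ\phy_a^{p_0,\ldots,p_n}$, is precisely the cube $\phy_a^{0,\ldots,1_A,\ldots,0}$.

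From here the equivalence is essentially formal. First I would prove the ``only if'' direction: assuming $\phy^n$ good, the wedge condition applied with $x=1_A$ says that $\phy_a^{0,\ldots,1_A,\ldots,0}$ is the constant $n$-cube at $a\cdot 1_A = a$. Running this over all $k=0,\ldots,n$ shows the composite $(i_0^*,\ldots,i_n^*)\circ\phy_a^{p_0,\ldots,p_n}$ has all components constant at $a$, i.e. it is the constant $n$-cube at $(a,\ldots,a)$. For the converse, assume the composite is constant at $(a,\ldots,a)$; then each $\phy_a^{0,\ldots,1_A,\ldots,0}$ is constant at $a$, and for a general $x\colon X\to A$ we use universality once more:
\[
\phy_a^{0,\ldots,x,\ldots,0} = \phy_a^{p_0,\ldots,p_n}\ot(0,\ldots,x,\ldots,0) = \phy_a^{p_0,\ldots,p_n}\ot(i_k\circ x) = \big(\phy_a^{p_0,\ldots,p_n}\ot i_k\big)\ot x = \phy_a^{0,\ldots,1_A,\ldots,0}\ot x.
\]
Since $\phy_a^{0,\ldots,1_A,\ldots,0}$ is the constant cube at $a$, the $\ot$-composition with the $0$-cube $x$ is the constant cube at $ax$, which is exactly the wedge condition for $\phy^n$.

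The step I expect to require the most care is the bookkeeping around $\ot$-composition with $0$-cubes: one must check that $\big(c\ot i_k\big)\ot x = c\ot(i_k\circ x)$ for an $n$-cube $c$ and $0$-cubes $i_k$, $x$ — that is, that ``$\ot$-composition with a $0$-cube on the right'' is associative/functorial in the expected way. This follows directly from the associativity of $\mu$ in the $\Topp_*$-category $\cat{T}$ together with the identification of pointwise composition with a $0$-cube as the restriction of $\ot$-composition along a diagonal (Definition~\ref{def:ProdCubes}), but it should be spelled out, since the whole argument hinges on rewriting $i_k^*$ as $(-)\ot i_k$ and then factoring $i_k\circ x = i_k$ postcomposed — or rather, precomposed — appropriately. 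Everything else is a direct substitution into the universality and wedge conditions, so no further obstacle arises.
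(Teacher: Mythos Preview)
Your proposal is correct and follows essentially the same approach as the paper: both arguments use universality to rewrite $\phy_a^{0,\ldots,x,\ldots,0}$ as $\phy_a^{p_0,\ldots,p_n}\ot i_k\ot x$, and then observe that this is constant at $ax$ for all $x$ if and only if $\phy_a^{p_0,\ldots,p_n}\ot i_k$ is constant at $a$. The paper compresses the two directions into a single ``if and only if'' sentence, whereas you spell them out separately and flag the associativity of $\ot$-composition with $0$-cubes; this extra care is harmless and the substance is identical.
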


\begin{proof}
The wedge condition says that for all $x \colon X \to A$ and all index $0 \leq k \leq n$, the map
\[
\phy_a^{0,\ldots,x,\ldots,0} \colon I^n \to \cat{T} (X,B)
\]
is the constant $n$-cube $\cst_{ax}$ at $ax \in \cat{T}(X,B)$. Since $\phy^n$ satisfies universality, we have
\begin{align*}
\phy_a^{0,\ldots,x,\ldots,0} &= \phy_a^{p_0, \ldots, p_n} \ot (0,\ldots,x,\ldots,0) \\
&= \phy_a^{p_0, \ldots, p_n} \ot \left( i_k x \right) \\
&= \phy_a^{p_0, \ldots, p_n} \ot i_k \ot x. 
\end{align*}
This cube is the constant $n$-cube at $ax$ for all $x$ if and only if
\[
\phy_a^{p_0, \ldots, p_n} \ot i_k \colon I^n \to \cat{T}(A,B)
\]
is the constant $n$-cube at $a \in \cat{T}(A,B)$. %
\end{proof}

\begin{lem} \label{lem:UnivObstruc}
If an $(n-1)$-distributor $\phy^{n-1}$ satisfies the universality property \ref{def:Good} \eqref{eq:Universality}, then the obstruction to $n$-distributivity $\obs{\phy^{n-1}}$ satisfies the following ana\-lo\-gous property: For all $a, x_0, \ldots, x_n \in \cat{T}$, the equality 
\[
\obs{\phy^{n-1}}_a^{x_0, \ldots, x_n} = \obs{\phy^{n-1}}_a^{p_0, \ldots, p_n} \ot (x_0, \ldots, x_n)
\]
holds. Both sides are maps $\del I^n \to \cat{T} \left( X, B \right)$.
\end{lem}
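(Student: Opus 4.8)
The plan is to prove the claimed identity face by face. Recall that both sides of the asserted equation are maps $\del I^n \to \cat{T}(X,B)$, and each is defined via the face formula by its restrictions to the proper subcubes $C_{\si} \subset \del I^n$. So it suffices to check, for every code $\si \in \{0,1,I\}^n$ with $\si \neq (I,\ldots,I)$, that
\[
\left( \obs{\phy^{n-1}}_a^{x_0, \ldots, x_n} \right) \big\vert_{C_{\si}} = \left( \obs{\phy^{n-1}}_a^{p_0, \ldots, p_n} \ot (x_0, \ldots, x_n) \right) \big\vert_{C_{\si}}.
\]
By Definition~\ref{def:nObstruc} the left-hand side is $\phy^{n-1}[\si]$ evaluated at the inputs $a, x_0, \ldots, x_n$, and the right-hand side is $\phy^{n-1}[\si]$ evaluated at $a, p_0, \ldots, p_n$, then $\ot$-composed with $(x_0, \ldots, x_n) \colon X \to A^{n+1}$. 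So the task reduces to showing that the face formula $\phy^{n-1}[\si]$ itself is compatible with $\ot$-precomposition by $(x_0, \ldots, x_n)$.

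First I would unwind the face formula from Definition~\ref{def:FaceFormula}. In Step~2 it produces a single distributor $\phy_a^{x_{K_0}, \ldots, x_{K_d}}$, where the $x_{K_l} = \sum_{k \in K_l} x_k$ are partial sums; in Step~1 these are assembled via external addition $\op$. The key observations are: (i) since $\phy^{n-1}$ is assumed to satisfy universality, each block $\phy_a^{x_{K_0}, \ldots, x_{K_d}}$ equals $\phy_a^{p_0', \ldots, p_d'} \ot (x_{K_0}, \ldots, x_{K_d})$ where $p_0', \ldots, p_d' \colon A^{d+1} \to A$ are the projections; (ii) the tuple $(x_{K_0}, \ldots, x_{K_d}) \colon X \to A^{d+1}$ factors as the composite of $(x_0, \ldots, x_n) \colon X \to A^{n+1}$ with the "block-sum" map $A^{n+1} \to A^{d+1}$ that adds up coordinates according to the partition; and (iii) for the external sum in Step~1, Lemma~\ref{lem:LeftLinCubes} says that $\ot$-composition (with a $0$-cube, hence in particular precomposition by the map $(x_0,\ldots,x_n)$) distributes over $\op$. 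Combining (i)--(iii), each summand of $\phy^{n-1}[\si]_a^{x_0,\ldots,x_n}$ is $\left(\text{corresponding summand of } \phy^{n-1}[\si]_a^{p_0,\ldots,p_n}\right) \ot (x_0, \ldots, x_n)$, and summing via $\op$ and invoking Lemma~\ref{lem:LeftLinCubes} once more collects the common factor $\ot(x_0,\ldots,x_n)$ outside. This gives exactly the desired equality on $C_{\si}$.

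An alternative, cleaner packaging: apply the universality identity~\eqref{eq:NaturalCube} for $\phy^{n-1}$ directly to each cell. Since $\obs{\phy^{n-1}}$ is built from the cubes $\phy^{n-1}$ of the genuine $(n-1)$-distributor (which satisfies universality by hypothesis), and since the only operations used to assemble the obstruction map out of those cubes are $\op$ and restriction — both of which commute with $\ot$-precomposition by a fixed map, the former by Lemma~\ref{lem:LeftLinCubes} and the latter trivially — the identity propagates from $\phy^{n-1}$ to $\obs{\phy^{n-1}}$ formally. I would phrase the write-up this way to avoid re-deriving the bookkeeping of the partitions $J_k$ and $K_l$.

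The main obstacle is purely notational: making precise the claim in point (ii), that the partial-sum tuple $(x_{K_0}, \ldots, x_{K_d})$ factors through $(x_0, \ldots, x_n)$ via a coordinate-summing map, and checking that this factorization is compatible with what the universality condition~\eqref{eq:NaturalCube} produces when one re-expands $\phy_a^{x_{K_0},\ldots,x_{K_d}}$ in terms of projections $p_0, \ldots, p_n \colon A^{n+1} \to A$ rather than $p_0', \ldots, p_d' \colon A^{d+1} \to A$. Concretely one needs $p_l' \circ (\text{block-sum map}) = p_{\min K_l} + \cdots$ summed over $K_l$, i.e. the block-sum map composed with a projection is the sum of the corresponding original projections; this is immediate from the definition of products and the additive structure, but must be stated. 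Once this identification is in place, the associativity and left-linearity of $\ot$ (Definition~\ref{def:ProdCubes}) together with Lemma~\ref{lem:LeftLinCubes} finish the argument with no further content.
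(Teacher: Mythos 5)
Your proof is correct and follows the same strategy as the paper's: check the identity on each proper face $C_{\si}$, split via Step~1 of the face formula using Lemma~\ref{lem:LeftLinCubes} to pull the precomposition $\ot (x_0,\ldots,x_n)$ through the external sums, and then handle each Step~2 block by applying universality of the lower-dimensional distributor together with the factorization $(p_{K_0},\ldots,p_{K_d})\circ(x_0,\ldots,x_n) = (x_{K_0},\ldots,x_{K_d})$. Your ``block-sum map'' is exactly the paper's $(p_{K_0},\ldots,p_{K_d}) \colon A^{n+1} \to A^{d+1}$, so the two arguments coincide.
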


\begin{proof}
Let us show that both sides agree when restricted to any face $C_{\si} \subseteq \del I^n$. The right-hand side is:
\begin{align*}
&\left( \obs{\phy^{n-1}}_a^{p_0, \ldots, p_n} \ot (x_0, \ldots, x_n) \right) \vert_{C_{\si}} \\
= &\left( \obs{\phy^{n-1}}_a^{p_0, \ldots, p_n} \right) \vert_{C_{\si}} \ot (x_0, \ldots, x_n) \\
= &\left( \phy^{n-1}[\si \vert_{J_0}]_a^{p_0, \ldots, p_n} \op \ldots \op \phy^{n-1}[\si \vert_{J_t}]_a^{p_0, \ldots, p_n} \right) \ot (x_0, \ldots, x_n) \\
= &\left( \phy^{n-1}[\si \vert_{J_0}]_a^{p_0, \ldots, p_n} \ot (x_0, \ldots, x_n) \right) \op \ldots \op \left( \phy^{n-1}[\si \vert_{J_t}]_a^{p_0, \ldots, p_n} \ot (x_0, \ldots, x_n) \right)
\end{align*}
using Lemma~\ref{lem:LeftLinCubes}. Here we used the notation of Definition~\ref{def:FaceFormula}, with the partition into intervals $\ord{n} = J_0 \sqcup \ldots \sqcup J_t$. Hence, it suffices to check the claim for each such interval $J$, itself partitioned into intervals $J = K_0 \sqcup \ldots \sqcup K_d$. Here the dimension $d$ satisfies $d < n$, since $C_{\si}$ is a face of the boundary $\del I^n$. By definition of the obstruction map $\obs{\phy^{n-1}}$, we have
\begin{align}
&\phy^{n-1}[\si \vert_{J}]_a^{p_0, \ldots, p_n} \ot (x_0, \ldots, x_n) \nonumber \\ 
= &\phy_a^{p_{K_0}, \ldots, p_{K_d}} \ot (x_0, \ldots, x_n) \nonumber \\
= &\phy_a^{\pi_0, \ldots, \pi_d} \ot \left( p_{K_0}, \ldots, p_{K_d} \right) \ot (x_0, \ldots, x_n) \nonumber
\end{align}
by universality of $\phy^{d}$, where we denoted the projection maps $\pi_i \colon A^{d+1} \to A$. Since the composite
\[
\xymatrix @C=3.5pc {
X \ar[r]^-{(x_0, \ldots, x_n)} & A^{n+1} \ar[r]^-{(p_{K_0}, \ldots, p_{K_d})} & A^{d+1} \\
}
\]
is equal to $(x_{K_0}, \ldots, x_{K_d}) \colon X \to A^{d+1}$, we obtain the further simplifications:
\begin{align*}
= &\phy_a^{\pi_0, \ldots, \pi_d} \ot \left( (p_{K_0}, \ldots, p_{K_d}) (x_0, \ldots, x_n) \right) \\
= &\phy_a^{\pi_0, \ldots, \pi_d} \ot \left( x_{K_0}, \ldots, x_{K_d} \right) \\
= &\phy_a^{x_{K_0}, \ldots, x_{K_d}} \quad \text{by universality of } \phy^{d} \\
= &\phy^{n-1}[\si \vert_{J}]_a^{x_0, \ldots, x_n}. \qedhere
\end{align*}
\end{proof}

\begin{cor} \label{cor:UnivSuffices}
Let $\phy^{n-1}$ be an $(n-1)$-distributor satisfying the universality condition \ref{def:Good} \eqref{eq:Universality}, and let
\[
\phy_a^{p_0, \ldots, p_n} \colon I^n \to \cat{T} \left( A^{n+1}, B \right)
\]
be an extension of the obstruction map $\obs{\phy^{n-1}}_a^{p_0, \ldots, p_n} \colon \del I^n \to \cat{T} \left( A^{n+1}, B \right)$, for each map $a \in \cat{T}$, depending continuously on $a$. Then the formula
\[
\phy_a^{x_0, \ldots, x_n} := \phy_a^{p_0, \ldots, p_n} \ot (x_0, \ldots, x_n)
\]
defines an $n$-distributor $\phy^{n}$ based on $\phy^{n-1}$. Note that $\phy^{n}$ also satisfies universality, by construction.
\end{cor}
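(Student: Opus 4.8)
The plan is to apply the inductive characterization of distributors from Lemma~\ref{lem:nDistribInduc}. Since $\phy^{n-1}$ is assumed to be an $(n-1)$-distributor, it suffices to check that the family
\[
\phy^n = \left\{ \phy_a^{x_0, \ldots, x_n} := \phy_a^{p_0, \ldots, p_n} \ot (x_0, \ldots, x_n) \mid a, x_0, \ldots, x_n \in \cat{T} \right\}
\]
consists of genuine $n$-cubes in $\cat{T}$, depends continuously on its inputs, and satisfies the boundary condition $\phy^n \vert_{\del I^n} = \obs{\phy^{n-1}}$. The first point is immediate: $\phy_a^{x_0, \ldots, x_n}$ is the $\ot$-composition of the $n$-cube $\phy_a^{p_0, \ldots, p_n} \colon I^n \to \cat{T}(A^{n+1},B)$ with the $0$-cube $(x_0, \ldots, x_n) \colon X \to A^{n+1}$, hence an $n$-cube in $\cat{T}(X,B)$.

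The key step is the boundary condition. For a fixed map $f \colon X \to A^{n+1}$, the operation $\ga \mapsto \ga \ot f$ on $n$-cubes is simply postcomposition with the continuous map $\cat{T}(A^{n+1}, B) \to \cat{T}(X,B)$, $z \mapsto zf$; in particular it commutes with restriction of cubes to $\del I^n$. Combining this with the hypothesis that $\phy_a^{p_0, \ldots, p_n}$ extends $\obs{\phy^{n-1}}_a^{p_0, \ldots, p_n}$, we get
\[
\phy_a^{x_0, \ldots, x_n} \vert_{\del I^n} = \left( \phy_a^{p_0, \ldots, p_n} \vert_{\del I^n} \right) \ot (x_0, \ldots, x_n) = \obs{\phy^{n-1}}_a^{p_0, \ldots, p_n} \ot (x_0, \ldots, x_n).
\]
Now Lemma~\ref{lem:UnivObstruc} applies, because $\phy^{n-1}$ satisfies the universality property \ref{def:Good} \eqref{eq:Universality}: it identifies the right-hand side with $\obs{\phy^{n-1}}_a^{x_0, \ldots, x_n}$, which is exactly the desired boundary condition.

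For continuity, I would factor the assignment $(a, x_0, \ldots, x_n) \mapsto \phy_a^{x_0, \ldots, x_n}$ through $(a, x_0, \ldots, x_n) \mapsto \left( \phy_a^{p_0, \ldots, p_n}, (x_0, \ldots, x_n) \right) \mapsto \phy_a^{p_0, \ldots, p_n} \ot (x_0, \ldots, x_n)$. The first map is continuous by the standing hypothesis (continuous dependence of $\phy_a^{p_0, \ldots, p_n}$ on $a$) together with continuity of $(x_0, \ldots, x_n) \mapsto (x_0, \ldots, x_n) \colon \cat{T}(X,A)^{n+1} \to \cat{T}(X, A^{n+1})$; the second is continuous because $\ot$-composition is assembled from the continuous composition map $\mu$ and the exponential law in $\Topp$. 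Finally, the universality of $\phy^n$ is built into the defining formula, the case $(x_0, \ldots, x_n) = (p_0, \ldots, p_n) = \id_{A^{n+1}}$ being consistent since $\ot$-composing with the identity $0$-cube does nothing. I do not expect a serious obstacle here: the whole argument is an assembly of Lemma~\ref{lem:UnivObstruc}, Lemma~\ref{lem:nDistribInduc}, and formal properties of $\ot$-composition, the only point needing a moment's care being that restriction to $\del I^n$ commutes with $\ot$-composition by a $0$-cube.
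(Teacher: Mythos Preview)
Your argument is correct and follows essentially the same route as the paper: restrict the defining formula to $\del I^n$, use that $\ot$-composition with a $0$-cube commutes with restriction, invoke the hypothesis that $\phy_a^{p_0,\ldots,p_n}$ extends the obstruction map, and then apply Lemma~\ref{lem:UnivObstruc}. Your version is simply more explicit about continuity and the role of Lemma~\ref{lem:nDistribInduc}, which the paper leaves implicit.
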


\begin{proof}
The formula is well-defined and continuous in its inputs $a, x_0, \ldots x_n$. The restriction of $\phy^n$ to the boundary $\del I^n$ is:
\begin{align*}
\phy_a^{x_0, \ldots, x_n} \vert_{\del I^n} &= \left( \phy_a^{p_0, \ldots, p_n} \ot (x_0, \ldots, x_n) \right) \vert_{\del I^n} \\
&= \left( \phy_a^{p_0, \ldots, p_n} \vert_{\del I^n} \right) \ot (x_0, \ldots, x_n) \\
&= \left( \obs{\phy^{n-1}}_a^{p_0, \ldots, p_n} \right) \ot (x_0, \ldots, x_n) \quad \text{by assumption} \\
&= \obs{\phy^{n-1}}_a^{x_0, \ldots, x_n}
\end{align*}
by Lemma~\ref{lem:UnivObstruc}, using the fact that $\phy^{n-1}$ satisfies universality.
\end{proof}

\begin{lem} \label{lem:WedgeObstruc}
Let $\phy^{n-1}$ be an $(n-1)$-distributor for $\cat{T}$ satisfying the wedge condition, i.e., Definition~\ref{def:Good} \eqref{eq:Wedge}.
\begin{enumerate}
\item The obstruction to $n$-distributivity $\obs{\phy^{n-1}} \colon \del I^n \to \cat{T}$ satisfies the following ana\-lo\-gous condition: For all maps $a \colon A \to B$ and $x \colon X \to A$ in $\cat{T}$, and index $0 \leq k \leq n$, the map
\[
\obs{\phy^{n-1}}_a^{0,\ldots,x,\ldots,0} \colon \del I^n \to \cat{T}(X,B)
\]
is constant with value $ax$.
\item If moreover $\phy^{n-1}$ satisfies universality, i.e., Definition~\ref{def:Good} \eqref{eq:Universality}, then the property of $\obs{\phy^{n-1}}$ described in the previous part is equivalent to the following: For every map $a \colon A \to B$ in $\cat{T}$, the composite
\[
\xymatrix @C=4.8pc {
\del I^n \ar[r]^-{\obs{\phy^{n-1}}_a^{p_0, \ldots, p_n}} & \cat{T} (A^{n+1}, B) \ar[r]^-{(i_0^*, \ldots, i_n^*)} & \cat{T} (A, B)^{n+1}
}
\]
is constant with value $(a, \ldots, a)$.
\end{enumerate}
\end{lem}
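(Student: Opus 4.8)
The plan is to dispatch the two parts in turn, each time transporting to the obstruction map a statement already known for distributors.

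\emph{Part (1).} Since proper faces cover $\del I^n$ and $\obs{\phy^{n-1}}$ is defined face by face, it suffices to show that for every proper subcube $C_{\si} \subseteq \del I^n$ the restriction $\obs{\phy^{n-1}}_a^{0,\ldots,x,\ldots,0} \vert_{C_{\si}} = \phy^{n-1}[\si]_a^{0,\ldots,x,\ldots,0}$ is the constant cube $\cst_{ax}$. I would plug the degenerate inputs $x_0 = \cdots = x_{k-1} = 0$, $x_k = x$, $x_{k+1} = \cdots = x_n = 0$ into the face formula of Definition~\ref{def:FaceFormula}: $\phy^{n-1}[\si]$ is an external sum $\phy^{n-1}[\si\vert_{J_0}] \op \cdots \op \phy^{n-1}[\si\vert_{J_t}]$ with $\phy^{n-1}[\si\vert_J] = \phy_a^{x_{K_0},\ldots,x_{K_d}}$ and $x_{K_l} = \sum_{j \in K_l} x_j$. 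Because the only nonzero input is in position $k$, and $k$ lies in exactly one block $K_l$ of exactly one interval $J$, \emph{every} one of these terms $\phy_a^{x_{K_0},\ldots,x_{K_d}}$ has all inputs $0$ except possibly a single one, which equals $x$ precisely in the block containing $k$ and is $0$ otherwise. By the wedge condition for the distributors $\phy^{d}$ with $d \leq n-1$ (the hypothesis on $\phy^{n-1}$ propagates downward by restriction, cf.\ Remark~\ref{rem:HighestDim}), $\phy_a^{0,\ldots,x,\ldots,0}$ is $\cst_{ax}$ and $\phy_a^{0,\ldots,0}$ (the $x=0$ case of the wedge condition) is $\cst_{0}$. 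Hence $\phy^{n-1}[\si]_a^{0,\ldots,x,\ldots,0}$ is an external sum of constant cubes, all but one equal to $\cst_0$ and the remaining one equal to $\cst_{ax}$, which is again constant with value $0 + \cdots + ax + \cdots + 0 = ax$. This gives (1).

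\emph{Part (2).} Now assume in addition that $\phy^{n-1}$ satisfies universality. Then Lemma~\ref{lem:UnivObstruc} yields the identity $\obs{\phy^{n-1}}_a^{x_0,\ldots,x_n} = \obs{\phy^{n-1}}_a^{p_0,\ldots,p_n} \ot (x_0,\ldots,x_n)$, and from here the argument mirrors the proof of Lemma~\ref{lem:UnivWedge}. Writing $(0,\ldots,x,\ldots,0) = i_k \ot x$ and using associativity of $\ot$-composition,
\[
\obs{\phy^{n-1}}_a^{0,\ldots,x,\ldots,0} = \obs{\phy^{n-1}}_a^{p_0,\ldots,p_n} \ot i_k \ot x = \left( i_k^{*} \circ \obs{\phy^{n-1}}_a^{p_0,\ldots,p_n} \right) \ot x,
\]
a map $\del I^n \to \cat{T}(X,B)$. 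This is $\cst_{ax}$ for all objects $X$ and all $x \colon X \to A$ if and only if the $n$-cube $i_k^{*} \circ \obs{\phy^{n-1}}_a^{p_0,\ldots,p_n} \colon \del I^n \to \cat{T}(A,B)$ is $\cst_{a}$: the ``if'' direction is immediate, and the ``only if'' direction follows by specializing $X = A$, $x = 1_A$. Letting $k$ range over $\{0,\ldots,n\}$ bundles the $n+1$ conditions $i_k^{*} \circ \obs{\phy^{n-1}}_a^{p_0,\ldots,p_n} = \cst_{a}$ into the single statement that $(i_0^{*},\ldots,i_n^{*}) \circ \obs{\phy^{n-1}}_a^{p_0,\ldots,p_n}$ is $\cst_{(a,\ldots,a)}$, which is exactly the asserted equivalence.

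\emph{Main obstacle.} There is no genuine difficulty here; the one point demanding care is the bookkeeping in Part (1), namely verifying that substituting the degenerate input $(0,\ldots,x,\ldots,0)$ into the face formula collapses \emph{every} summand into a wedge-type term $\phy_a^{0,\ldots,(x \text{ or }0),\ldots,0}$, so that only the wedge condition on $\phy^{n-1}$ (and its lower-dimensional consequences) is needed, and that the external sum of the resulting constant cubes is again constant with the expected value $ax$.
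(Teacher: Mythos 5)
Your proof is correct and follows essentially the same route as the paper's. Part~(1) uses exactly the paper's decomposition via the face formula's blocks $J_0 \sqcup \cdots \sqcup J_t$ and $J_l = K_0 \sqcup \cdots \sqcup K_d$, collapsing all but one summand to the constant cube at $0$; Part~(2) is precisely the argument the paper delegates to Lemma~\ref{lem:UnivWedge}, which you spell out, including the observation that the ``only if'' direction follows by specializing to $x = 1_A$.
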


\begin{proof}
(1) If suffices to show that for every face $C_{\si} \subset \del I^n$, the restriction $\obs{\phy^{n-1}}_a^{0,\ldots,x,\ldots,0} \vert_{C_{\si}} = \phy^{n-1}[\si]_a^{0,\ldots,x,\ldots,0}$ is constant with value $ax$. Consider the partition $\ord{n} = J_0 \sqcup \ldots \sqcup J_t$ as in Definition~\ref{def:FaceFormula}, with the chosen index $k$ satisfying $k \in J_l$ for some unique $0 \leq l \leq t$. For $i \neq l$, we have
\[
\phy^{n-1}[\si \vert_{J_i}]_a^{0,\ldots,x,\ldots,0} = \phy_a^{0,\ldots,0} = 0 \colon I^{\dim \si \vert_{J_i}} \to \cat{T}(X,B).
\]
For $i = l$, write the partition \[
J_l = K_0 \sqcup K_1 \sqcup \ldots \sqcup K_d
\]
as in Definition~\ref{def:FaceFormula}, with $k \in K_m$. Then we have
\[
x_{K_j} = \begin{cases}
x &\text{if } j = m \\
0 &\text{if } j \neq m \\
\end{cases}
\]
and therefore
\begin{align*}
\phy^{n-1}[\si \vert_{J_l}]_a^{0,\ldots,x,\ldots,0} &= \phy_a^{x_{K_0},\ldots,x_{K_m},\ldots,x_{K_d}} \\
&= \phy_a^{0,\ldots,x,\ldots,0} \\
&= ax \colon I^{\dim \si \vert_{J_l}} \to \cat{T}(X,B)
\end{align*}
since $\phy^{d}$ satisfies the wedge condition. Finally, we obtain:
\begin{align*}
\phy^{n-1}[\si]_a^{0,\ldots,x,\ldots,0} &= \phy^{n-1}[\si \vert_{J_0}]_a^{0,\ldots,x,\ldots,0} \op \cdots \op \phy^{n-1}[\si \vert_{J_l}]_a^{0,\ldots,x,\ldots,0} \op \cdots \op \phy^{n-1}[\si \vert_{J_t}]_a^{0,\ldots,x,\ldots,0} \\
&= 0 \op \cdots \op ax \op \cdots \op 0 \\
&= ax \colon I^{\dim \si} \to \cat{T}(X,B).
\end{align*}
(2) This uses the same argument as in Lemma~\ref{lem:UnivWedge}.
\end{proof}

\begin{lem} \label{lem:ProdCofib}
Let $i \colon X \to Y$ be a Serre cofibration between spaces, and let $L$ be a Serre cofibrant space. Then the map $i \x L \colon X \x L \to Y \x L$ is a Serre cofibration. 

The statement also holds with every instance of ``Serre'' replaced by ``mixed'', or every instance replaced by ``Hurewicz''.
\end{lem}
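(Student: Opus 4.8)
The approach is to exhibit $i \times \id_L$ as a pushout--product and to invoke, in each of the three cases, the fact that the cartesian product makes $\Topp$ a monoidal model category: for the Quillen (Serre) structure this is standard for a convenient category of spaces such as ours, for the Hurewicz structure it is the classical pushout--product theorem for closed Hurewicz cofibrations, and for the mixed structure it is due to Cole. In all three cases the pushout--product axiom says that for cofibrations $f\colon A\to B$ and $g\colon C\to D$, the induced map
\[
f\mathbin{\square}g\colon (B\times C)\cup_{A\times C}(A\times D)\longrightarrow B\times D
\]
is again a cofibration. Apply this with $f=i$ and $g$ the map $\emptyset\to L$: in the Quillen and mixed structures $\emptyset\to L$ is a cofibration exactly because $L$ is (Serre, resp.\ mixed) cofibrant, while in the Hurewicz structure $\emptyset\to L$ is always a closed cofibration, consistent with ``$L$ Hurewicz cofibrant'' being an empty condition.

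Because $B\times\emptyset=\emptyset=A\times\emptyset$, the pushout in the domain of $f\mathbin{\square}g$ collapses to $A\times L=X\times L$, so $f\mathbin{\square}g$ is precisely $i\times\id_L\colon X\times L\to Y\times L$, which proves the lemma. If one prefers not to quote monoidality, the same content can be unpacked through the adjunction $(-\times L)\dashv(-)^{L}$: by the lifting characterization of cofibrations it suffices to check that $(-)^{L}$ sends acyclic fibrations to acyclic fibrations. For the Hurewicz structure this is immediate and needs nothing of $L$, since $(-)^{L}$ preserves Hurewicz fibrations and homotopy equivalences. For the Quillen and mixed structures, write $L$ as a retract of a CW complex $L'$, so that $(-)^{L}$ is a retract of $(-)^{L'}$; then $(-)^{L'}$ preserves Serre and Hurewicz fibrations, and an acyclic fibration --- being detected by the right lifting property against the cell inclusions $S^{n-1}\inj D^{n}$ and against $\emptyset\to D^0$ --- is sent by $(-)^{L'}$ to a map with the same lifting property, since $(S^{n-1}\inj D^{n})\times\id_{L'}$ is a relative cell complex. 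For the mixed case one also uses that a Hurewicz fibration which is a weak homotopy equivalence is in particular a Serre fibration which is a weak homotopy equivalence, so the weak-equivalence half is handled by the Serre argument while the Hurewicz-fibration half is preserved directly.

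The computational heart --- equivalently, the core of the pushout--product axiom --- is the assertion that $(S^{n-1}\inj D^{n})\times\id_{L'}$ is a relative cell complex when $L'$ is a CW complex; this rests on the homeomorphism $\del(D^{n}\times D^{m})\cong S^{n+m-1}$, together with the fact that $D^{n}\times(-)$ preserves colimits (as $\Topp$ is cartesian closed), plus the routine but slightly fiddly reduction of an arbitrary cofibrant $L$ to such an $L'$ by a retract. Of the three cases the mixed structure is the least standard, but, as noted above, it requires no genuinely new argument once the Serre and Hurewicz facts are available.
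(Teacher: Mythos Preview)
Your proposal is correct and matches the paper's approach: the paper's proof is simply a citation to May--Ponto \cite{MayP12}*{Theorems~17.1.1, 17.2.2, 17.4.2}, which are precisely the pushout--product (monoidal model category) theorems for the Serre, Hurewicz, and mixed model structures on $\Topp$, and your argument unpacks exactly this content by specializing the pushout--product to $i \mathbin{\square} (\emptyset \to L)$. One small caution about your alternative adjunction argument: in the mixed model structure a cofibrant $L$ has the homotopy type of a CW complex but need not be a \emph{retract} of one, so the ``write $L$ as a retract of a CW complex $L'$'' step would need adjustment there; your primary pushout--product argument, however, is unaffected.
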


\begin{proof}
See \cite{MayP12}*{Theorems~17.1.1, 17.2.2, 17.4.2}.
\end{proof}

\begin{prop} \label{pr:ExistNDistrib}
Let $\cat{T}$ be a weakly bilinear mapping theory in which all mapping spaces $\cat{T}(A,B)$ are Serre cofibrant. Let $n \geq 1$, and let $\phy^{n-1}$ be a good $(n-1)$-distributor for $\cat{T}$. Then there exists a good $n$-distributor $\phy^{n}$ for $\cat{T}$ based on $\phy^{n-1}$. Moreover, %
such a $\phy^{n}$ 
is unique up to homotopy rel $\del I^n$.
\end{prop}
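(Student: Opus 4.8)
The plan is to use the universality reduction of Corollary~\ref{cor:UnivSuffices}, so that constructing a good $n$-distributor reduces to a lifting problem for the single map $a \mapsto \phy_a^{p_0,\ldots,p_n}$, parametrized by $a \in \cat{T}(A,B)$. First I would invoke Lemma~\ref{lem:UnivObstruc} and Lemma~\ref{lem:WedgeObstruc} to see that the obstruction map $\obs{\phy^{n-1}}$ inherits universality and the wedge condition from $\phy^{n-1}$. By Lemma~\ref{lem:WedgeObstruc}(2), for each object pair $(A,B)$ and each $a$, the composite $(i_0^*,\ldots,i_n^*) \circ \obs{\phy^{n-1}}_a^{p_0,\ldots,p_n} \colon \del I^n \to \cat{T}(A,B)^{n+1}$ is the constant $n$-cube at $(a,\ldots,a)$. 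So what I really need is, for each $A,B$, a lift in the square whose top map is $\obs{\phy^{n-1}}^{p_\bullet} \colon \del I^n \x \cat{T}(A,B) \to \cat{T}(A^{n+1},B)$ (adjoint form, with $a$ as a parameter), whose bottom map is the composite landing in $\cat{T}(A,B)^{n+1}$ factoring as the constant-diagonal $\cat{T}(A,B) \ral{\Delta} \cat{T}(A,B)^{n+1}$ followed by projection $I^n \x \cat{T}(A,B) \to \cat{T}(A,B)$, left vertical $\del I^n \x \cat{T}(A,B) \inj I^n \x \cat{T}(A,B)$, right vertical the trivial fibration $(i_0^*,\ldots,i_n^*) \colon \cat{T}(A^{n+1},B) \to \cat{T}(A,B)^{n+1}$ from property~\eqref{item:TrivFib}.

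The lift exists because $(i_0^*,\ldots,i_n^*)$ is a trivial fibration (Serre fibration and weak equivalence), and $\del I^n \inj I^n$ is a Serre cofibration; hence $\del I^n \x \cat{T}(A,B) \inj I^n \x \cat{T}(A,B)$ is a Serre cofibration by Lemma~\ref{lem:ProdCofib}, using that $\cat{T}(A,B)$ is Serre cofibrant by hypothesis. A lift against a trivial fibration from a cofibration then exists by the model-category lifting axiom (equivalently, since $(\del I^n \inj I^n) \x (\emptyset \inj \cat{T}(A,B))$ is an acyclic cofibration... no — it is a cofibration, and the right map is an acyclic fibration, so the lift is exactly the standard lifting property). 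Taking the adjoint of this lift gives $\phy_a^{p_0,\ldots,p_n} \colon I^n \to \cat{T}(A^{n+1},B)$ depending continuously on $a$, extending the obstruction map, and whose restriction to the wedge is constant at $(a,\ldots,a)$; by Lemma~\ref{lem:UnivWedge} it satisfies the wedge condition. Corollary~\ref{cor:UnivSuffices} then packages the formula $\phy_a^{x_0,\ldots,x_n} := \phy_a^{p_0,\ldots,p_n} \ot (x_0,\ldots,x_n)$ into a good $n$-distributor based on $\phy^{n-1}$. Note one must carry out this lift for each of the finitely-or-countably many pairs $(A,B)$ of objects of the small category $\cat{T}$; since the objects with a fixed source $A$ and varying target, and vice versa, the constructions are independent, so doing them separately is fine, and the data assembles into a $\Topp_*$-natural family because each lift is continuous in $a$.

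For uniqueness up to homotopy rel $\del I^n$: by universality, any good $n$-distributor based on $\phy^{n-1}$ is determined by its universal components $\phy_a^{p_0,\ldots,p_n}$, and any two such are two lifts of the same obstruction map against the trivial fibration $(i_0^*,\ldots,i_n^*)$ along the cofibration $\del I^n \inj I^n$ (parametrized by $a$); two such lifts are homotopic rel the cofibration by the standard fact that the space of lifts against a trivial fibration is contractible (or: apply the lifting property once more to the cofibration $(\del I^n \x I) \cup (I^n \x \del I) \inj I^n \x I$ relative to the two given lifts on the ends). Transporting this homotopy along $\ot (x_0,\ldots,x_n)$ via the universality formula yields the homotopy rel $\del I^n$ between the two $n$-distributors on general inputs. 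The main obstacle is bookkeeping: making sure the lift is performed compatibly over all object-pairs so the result is genuinely a $\Topp_*$-enriched family (not just a lift for each $\Hom$-space separately), and checking that the wedge condition — which concerns restricting to the fat wedge $\bigvee A \inj A^{n+1}$ — is exactly what Lemma~\ref{lem:UnivWedge} and Lemma~\ref{lem:WedgeObstruc}(2) let us arrange by choosing the lift to restrict to the constant diagonal on the boundary; since the obstruction already has that property, any lift automatically does, so there is in fact no extra condition to impose on the lift beyond extending the obstruction.
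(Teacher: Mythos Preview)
Your proposal is correct and follows essentially the same approach as the paper: reduce to the universal case via Corollary~\ref{cor:UnivSuffices}, set up the lifting square against the trivial fibration $(i_0^*,\ldots,i_n^*)$ with the bottom map the constant diagonal (using Lemma~\ref{lem:WedgeObstruc} to verify commutativity), invoke Lemma~\ref{lem:ProdCofib} for the left-hand cofibration, and read off goodness from the two commuting triangles. For uniqueness the paper glues the two lifts along $\del I^n$ to get a map out of $S^n \times \cat{T}(A,B)$ and lifts once more; your phrasing via $(\del I^n \times I)\cup(I^n\times \del I)\inj I^n\times I$ is an equivalent packaging of the same lifting argument.

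One small clarification: your closing remark that ``any lift automatically'' satisfies the wedge condition is correct only in the sense of \emph{any lift in the square you wrote down}, where the bottom triangle forces the composite with $(i_0^*,\ldots,i_n^*)$ to be the constant diagonal; an arbitrary extension of the obstruction to $I^n$ need not satisfy this. Also, your worry about compatibility across object pairs $(A,B)$ is unnecessary: the definition of an $n$-distributor imposes no coherence between different triples $(X,A,B)$ beyond continuity in the inputs, so the lifts may be chosen independently.
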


\begin{proof}
By Equation~\eqref{eq:NaturalCube}, it suffices to consider the universal case $x_i = p_i \colon A^{n+1} \to A$. Recall that the restriction $\phy_a^{p_0, \ldots, p_n} \vert_{\del I^n}$ must be the obstruction map $\obs{\phy^{n-1}} \colon \del I^n \to \cat{T} (A^{n+1}, B)$, which is determined by $\phy^{n-1}$. Since $\phy^{n-1}$ satisfies the wedge condition, the following square commutes:
\[
\xymatrix @C=\bigcol @R=1pc {
\del I^n \x \cat{T}(A,B) \ar@{^{(}->}[dd] \ar[r]^-{\obs{\phy^{n-1}}} & \cat{T} \left( A^{n+1}, B \right) \ar[dd]^{(i_0^*, \ldots, i_n^*)} \\
& \\
I^n \x \cat{T}(A,B) \ar@{-->}[uur]^{\phy_a^{p_0, \ldots, p_n}} \ar[r]_-{\De \circ \proj_2} & \cat{T} (A,B)^{n+1} \\
(t, a) \ar@{|->}[r] & (a, \ldots, a), \\
}
\]
using Lemma~\ref{lem:WedgeObstruc}. Since the space $\cat{T}(A,B)$ is Serre cofibrant by assumption, the downward map $\del I^n \x \cat{T}(A,B) \to I^n \x \cat{T}(A,B)$ is a Serre cofibration, by Lemma~\ref{lem:ProdCofib}. Since $(i_0^*, \ldots, i_n^*)$ is a trivial Serre fibration, there exists a dotted filler in the diagram. The top triangle guarantees that the collection of $n$-cubes
\[
\phy^n \dfn \left\{ \phy_a^{p_0, \ldots, p_n} \ot (x_0, \ldots, x_n) \mid a, x_0, \ldots, x_n \in \cat{T} \right\}
\]
defines an $n$-distributor for $\cat{T}$ which is based on $\phy^{n-1}$, using Corollary~\ref{cor:UnivSuffices}. By construction, $\phy^{n}$ satisfies universality. The bottom triangle guarantees that $\phy^{n}$ also satisfies the wedge condition, hence is good.

For uniqueness, let $\phy$ and $\phy'$ be two good extensions of $\obs{\phy^{n-1}}$ to $I^n$. These jointly define a map
\[
\xymatrix{
\left( I^n \x \cat{T}(A,B) \right) \cup_{\del I^n \x \cat{T}(A,B)} \left( I^n \x \cat{T}(A,B) \right) \ar[d]_{\cong} \ar[r]^-{\phy \cup \phy'} & \cat{T} \left( A^{n+1}, B \right) \\
(I^n \cup_{\del I^n} I^n) \x \cat{T}(A,B) \ar@{-}[d]_{\cong} \ar[ur] & \\
S^n \x \cat{T}(A,B). \ar[uur] & \\
}
\]
Again, there exists a filler in the diagram
\[
\xymatrix @C=\bigcol {
S^n \x \cat{T}(A,B) \ar@{^{(}->}[d] \ar[r]^-{\phy \cup \phy'} & \cat{T} (A^{n+1}, B) \ar[d]^{(i_0^*, \ldots, i_n^*)} \\
D^n \x \cat{T}(A,B) \ar@{-->}[ur] \ar[r]_-{\De \circ \proj_2} & \cat{T} (A, B)^{n+1}, \\
}
\]
which provides a homotopy rel $\del I^n$ between $\phy$ and $\phy'$.
\end{proof}

\begin{thm} \label{thm:InftyDistrib}
Let $\cat{T}$ be a weakly bilinear mapping theory in which every mapping space $\cat{T}(A,B)$ is Serre cofibrant. Then $\cat{T}$ admits a good $\infty$-distributor (as in Definition~\ref{def:Good}).
\end{thm}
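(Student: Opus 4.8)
The plan is to prove Theorem~\ref{thm:InftyDistrib} by induction on $n$, using Proposition~\ref{pr:ExistNDistrib} as the inductive step. The entire structure of the argument has essentially been assembled in Section~\ref{sec:GoodDistrib}, so the proof should be short: it is a matter of feeding the hypotheses into the right lemma and invoking the inductive construction of distributors (Lemma~\ref{lem:nDistribInduc}).

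First I would set up the base case. By Definition~\ref{def:nDistrib}, $\phy^0$ is the collection of $0$-cubes $\phy_a^x = ax$, which is forced and requires no choice; and as observed just before Lemma~\ref{lem:UnivWedge}, $\phy^0$ is good trivially (the universality condition for $0$-cubes is just the identity $ax \circ (x) = ax$ rewritten via the universal case $x = p_0$, and the wedge condition is vacuous since with $n=0$ the only index is $k=0$ and there is no "$0$" slot to fill). So the induction starts with a good $0$-distributor.

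Next, the inductive step: suppose we have constructed a good $(n-1)$-distributor $\phy^{n-1}$ for some $n \geq 1$. Since $\cat{T}$ is a weakly bilinear mapping theory in which every mapping space $\cat{T}(A,B)$ is Serre cofibrant, Proposition~\ref{pr:ExistNDistrib} applies verbatim and produces a good $n$-distributor $\phy^{n}$ based on $\phy^{n-1}$ (unique up to homotopy rel $\del I^n$, though uniqueness is not needed here). Iterating this for all $n \geq 1$ yields a sequence $(\phy^0, \phy^1, \phy^2, \ldots)$ in which each $\phy^n$ is an $n$-distributor based on $\phy^{n-1}$ and each $\phy^n$ is good. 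By the last sentence of Lemma~\ref{lem:nDistribInduc}, such a sequence is precisely an $\infty$-distributor for $\cat{T}$, and by Definition~\ref{def:Good} it is good since every $\phy^n$ is. This completes the proof.

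I do not expect any genuine obstacle here, since all the real work — verifying that the wedge condition makes the lifting square of Proposition~\ref{pr:ExistNDistrib} commute, that the product with a cofibrant space is a cofibration (Lemma~\ref{lem:ProdCofib}), and that the universality formula of Corollary~\ref{cor:UnivSuffices} upgrades a universal-case extension to a full $n$-distributor — has already been done. The only point requiring a moment of care is checking that the Serre-cofibrancy hypothesis on mapping spaces is genuinely inherited at every stage; but this is a hypothesis on $\cat{T}$ itself, not on the distributor data, so it holds uniformly throughout the induction. One might also remark, as a sanity check, that the hypothesis is satisfied in the motivating example: by Proposition~\ref{pr:SimpModelCat} (or directly from the construction in Definition~\ref{def:EMMappingTh} together with Corollary~\ref{cor:GoodModelFiniteProd} in the appendix), the mapping spaces of $\cat{EM}$ are geometric realizations of simplicial sets, hence CW complexes, hence Serre cofibrant — so $\cat{EM}$ admits a good $\infty$-distributor. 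The mild subtlety worth flagging explicitly in the write-up is simply that "good" for the whole sequence means good at each finite stage, so no extra coherence beyond the finite stages needs to be checked.
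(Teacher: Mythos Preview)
Your proposal is correct and follows essentially the same approach as the paper: start from the trivially good $0$-distributor $\phy^0$ and inductively apply Proposition~\ref{pr:ExistNDistrib} to produce a good $n$-distributor $\phy^n$ based on $\phy^{n-1}$ for each $n \geq 1$. The paper's proof is a one-sentence invocation of exactly this induction.
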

Recall that the Serre cofibrant spaces are precisely the retracts of cell complexes, which include in particular CW complexes, in particular geometric realizations of simplicial sets.

\begin{proof}
Starting from the $0$-distributor $\phy^0$, inductively choose a good $n$-distributor $\phy^n$ based on $\phy^{n-1}$, for all $n \geq 1$, using Proposition~\ref{pr:ExistNDistrib}.
\end{proof}

\section{The Kristensen derivation} \label{sec:Kristensen}

In this section, we fix the prime $p=2$ and work with the mod $2$ Eilenberg--MacLane mapping theory $\cat{EM}$, as in Definition~\ref{def:EMMappingTh}. Recall that $K_n = \sh^n K_0$ denotes our preferred model for $\Si^n H\F_2$.

\subsection{The Kristensen derivation from $1$-distributivity} \label{sec:Kristensen1Dist}

Let $\phy^1$ be a good $1$-distributor for $\cat{EM}$; recall that $\phy^1$ consists of a collection of paths $\phy_a^{x,y}$ of the form illustrated here:
\[
\begin{tikzpicture}[scale=2]
\draw [->-=0.5] (0,0) -- (2,0);
\draw [fill=\VertexColor] (0,0) circle (\VertexSize);
\node [below left] at (0,0) {$a(x + y)$};
\draw [fill=\VertexColor] (2,0) circle (\VertexSize);
\node [below right] at (2,0) {$ax + ay$.};
\node [below, text=\EdgeColor] at (1,0) {$\phy_a^{x,y}$};
\end{tikzpicture}
\]
The following terminology and notation follows \cite{Baues06}*{\S 4.2}. 

\begin{defn}
The \Def{linearity tracks} for $\cat{EM}$ are the homotopy classes of the paths $\phy_a^{x,y}$ rel $\del I$, i.e., the tracks
\[
\Ga_a^{x,y} \dfn \{ \phy_a^{x,y} \}.
\]
\end{defn}

By Proposition~\ref{pr:ExistNDistrib}, $\Ga_a^{x,y}$ is well-defined, i.e., independent of the choice of a good $1$-distributor $\phy^1$. Now take an element of the Steenrod algebra $a \in \steen^m$ of degree $m$, represented by a map 
$a \colon K_0 \to K_{m}$.
Taking $x=y=1_{K_0}$, the linearity track $\Ga_a^{1,1}$ is a track in $\cat{EM}(K_0, K_{m})$ of the form
\[
\xymatrix{
0 = a0 = a(1 + 1) \ar@{=>}[r]^-{\Ga_a^{1,1}} & a 1 + a 1 = a + a = 0.
}
\]
Here, we used the fact that $K_n$ is an $\F_2$-vector space object, by Lemma~\ref{lem:GoodModelHA}. The track $\Ga_a^{1,1}$ is a well defined class
\[
\ka(a) \dfn \Ga_a^{1,1} \in \pi_1 \cat{EM}(K_0,K_m) = [H\F_2, \Si^{m-1} H\F_2] = \steen^{m-1}.
\]
This defines %
a function $\ka \colon \steen \to \steen$ of degree $-1$.

In what follows, we will use some of the linearity track equations \cite{Baues06}*{Theorem~4.2.5}.

\begin{lem} \label{lem:LinTrackEq}
The linearity $1$-tracks $\Ga_{a}^{x,y}$ satisfy the following equations.
\begin{enumerate}
\item Precomposition: $\Ga_a^{xd,yd} = \Ga_a^{x,y} d$.
\item Left linearity: $\Ga_{a+a'}^{x,y} = \Ga_a^{x,y} + \Ga_{a'}^{x,y}$.
\item Product rule: $\Ga_{ba}^{x,y} = \Ga_{b}^{ax,ay} \sq b \Ga_a^{x,y}$.
\end{enumerate}
\end{lem}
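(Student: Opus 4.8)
The plan is to prove each of the three linearity track equations by an argument that reduces the identity of tracks to an identity of paths, and then uses the defining boundary conditions of a good $1$-distributor together with naturality (universality) and left linearity from Section~\ref{sec:GoodDistrib}. Throughout, recall that $\Ga_a^{x,y} = \{\phy_a^{x,y}\}$ is independent of the chosen good $1$-distributor $\phy^1$ by Proposition~\ref{pr:ExistNDistrib} (uniqueness up to homotopy rel $\del I$), so it suffices to exhibit, for each equation, \emph{some} good $1$-distributor and a path-level identity witnessing the claim on homotopy classes. Concatenation of tracks is written multiplicatively (the $\sq$ in part (3)), pointwise sum of tracks additively, and horizontal composition with a map is the $\ot$-composition of Definition~\ref{def:ProdCubes}.

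First, for the precomposition identity $\Ga_a^{xd,yd} = \Ga_a^{x,y}d$: the path $\phy_a^{x,y}d = \phy_a^{x,y}\ot d$ is a path from $a(x+y)d = a(xd+yd)$ to $(ax+ay)d = axd+ayd$, using left linearity and functoriality, so it is a legitimate candidate path for the $1$-distributor evaluated at $(a, xd, yd)$. By the universality condition (Definition~\ref{def:Good}\eqref{eq:Universality}), a good $1$-distributor satisfies $\phy_a^{x,y} = \phy_a^{p_0,p_1}\ot(x,y)$, whence $\phy_a^{xd,yd} = \phy_a^{p_0,p_1}\ot(xd,yd) = \phy_a^{p_0,p_1}\ot((x,y)\ot d) = (\phy_a^{p_0,p_1}\ot(x,y))\ot d = \phy_a^{x,y}\ot d$ on the nose, using associativity of $\ot$ and the fact that $(x,y)d = (xd,yd)$. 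Passing to homotopy classes gives (1).

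Second, for left linearity $\Ga_{a+a'}^{x,y} = \Ga_a^{x,y} + \Ga_{a'}^{x,y}$: here I would use that in $\cat{EM}$ composition is \emph{left} linear, so the pointwise sum of paths $\phy_a^{x,y} + \phy_{a'}^{x,y}$ runs from $a(x+y)+a'(x+y) = (a+a')(x+y)$ to $(ax+ay)+(a'x+a'y) = (a+a')x + (a+a')y$, hence is a candidate path for the $1$-distributor at $(a+a',x,y)$; again by uniqueness rel $\del I$ of good $1$-distributors we conclude on homotopy classes. (Strictly one should check that $\{\phy_a^{x,y} + \phy_{a'}^{x,y}\} = \{\phy_a^{x,y}\} + \{\phy_{a'}^{x,y}\}$, which is immediate since pointwise addition of paths is continuous and descends to homotopy classes.) The mild subtlety is that left linearity of composition is being used in a strong pointwise form; this is exactly property~\eqref{item:LeftLin} of Proposition~\ref{pr:WeaklyBilinear}, applied to paths rather than points, i.e. the evident extension used already in Lemma~\ref{lem:LeftLinCubes}.

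Third — and this is the main obstacle — the product rule $\Ga_{ba}^{x,y} = \Ga_b^{ax,ay}\sq b\Ga_a^{x,y}$: unlike the first two, this is not witnessed by a single candidate path but by a concatenation, so one must produce a $2$-dimensional homotopy. The idea is to consider the path $p \dfn \phy_b^{ax,ay}\cdot(b\ot\phy_a^{x,y})$ obtained by concatenating: $b$ composed with $\phy_a^{x,y}$ is a path from $ba(x+y)$ to $b(ax+ay)$ (here $b\ot\phy_a^{x,y}$ means $b$ on the left, $\phy_a^{x,y}$ traversing), and $\phy_b^{ax,ay}$ is a path from $b(ax+ay)$ to $bax+bay = (ba)x+(ba)y$; the concatenation $p$ is therefore a candidate path for the $1$-distributor at $(ba,x,y)$. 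Since $\Ga_{ba}^{x,y}$ is independent of the choice of good $1$-distributor, it equals $\{p\} = \{\phy_b^{ax,ay}\}\sq\{b\ot\phy_a^{x,y}\} = \Ga_b^{ax,ay}\sq b\Ga_a^{x,y}$, provided one knows that the concatenation $p$ arises as the value of \emph{some} good $1$-distributor. To see this, invoke the construction in the proof of Proposition~\ref{pr:ExistNDistrib}: a good $1$-distributor can be built by choosing, compatibly and continuously in all maps, fillers in the universal case $x_i = p_i$; the concatenated path $p$ (in its universal form at $(ba, p_0, p_1)$) has the correct endpoints $ba(p_0+p_1) \rightsquigarrow bap_0 + bap_1$ and restricts correctly, so it is an admissible such choice, and any two good $1$-distributors agree up to homotopy rel endpoints. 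Hence $\{p\}$ is the well-defined linearity track $\Ga_{ba}^{x,y}$, giving (3). The technical care needed is only in formulating ``$b$ composed with a path'' as the $\ot$-composition $b\ot\phy_a^{x,y}$ and checking the endpoints match using left linearity of $\mu$ in the $b$-slot together with the boundary conditions~\eqref{eq:Boundary}; no genuinely hard point arises, but this is where one must be careful that all the composites are defined (Notation~\ref{nota:OmitObjects}) and that the concatenation is well-typed as a path in $\cat{EM}(K_0,K_m)$ for the appropriate degree $m$.
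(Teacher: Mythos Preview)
Your overall strategy is right and matches how the paper treats the analogous $2$-track identities (Notations~\ref{nota:LeftLinWitness}, \ref{nota:ProductWitness} and Lemmas~\ref{lem:2TrackEqLeftLin}, \ref{lem:2TrackEqProduct}); the paper itself defers Lemma~\ref{lem:LinTrackEq} to \cite{Baues06}. Part~(1) is correct as written: universality gives the equality of paths on the nose, exactly as in Lemma~\ref{lem:2TrackEqPrecompo}.

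There is, however, a logical slip in parts~(2) and~(3). You claim the candidate path (the pointwise sum $\phy_a^{x,y}+\phy_{a'}^{x,y}$, respectively the concatenation $\phy_b^{ax,ay}\sq b\phy_a^{x,y}$) ``arises as the value of some good $1$-distributor,'' and then appeal to uniqueness of good $1$-distributors. But a $1$-distributor assigns a path to each single morphism in the $a$-slot; your candidates depend on the pair $(a,a')$, respectively on the factorization into $b$ and $a$, so they are \emph{not} the values of any $1$-distributor at $a+a'$ or at $ba$. What you actually need is the bare lifting argument from the uniqueness half of Proposition~\ref{pr:ExistNDistrib}, applied directly to the two paths. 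Pass to the universal case $x=p_0$, $y=p_1$, and verify that the candidate path, like $\phy_{a+a'}^{p_0,p_1}$ (resp.\ $\phy_{ba}^{p_0,p_1}$), becomes constant under the restriction $(i_0^*,i_1^*)$. For~(2): $i_k^*\bigl(\phy_a^{p_0,p_1}+\phy_{a'}^{p_0,p_1}\bigr) = i_k^*\phy_a^{p_0,p_1}+i_k^*\phy_{a'}^{p_0,p_1} = \cst_a+\cst_{a'} = \cst_{a+a'}$ by left linearity. For~(3): each piece of the concatenation restricts to $\cst_{ba}$ under $i_k^*$, using the wedge condition for $\phy_a$ and for $\phy_b$ (the latter via $\phy_b^{ap_0,ap_1}\ot i_k = \phy_b^{ap_0 i_k,\,ap_1 i_k}$, which has exactly one nonzero input). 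Once both paths project to the same constant, the joint map $S^1\to\cat{T}(A^2,B)$ extends over $D^2$ downstairs and lifts along the trivial fibration, giving the required homotopy rel $\del I$. This is a minor repair---the missing verification is routine---but as phrased your invocation of ``some good $1$-distributor'' does not go through.
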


\begin{lem} \label{lem:Derivation}
The function $\ka \colon \steen \to \steen$ is a derivation.
\end{lem}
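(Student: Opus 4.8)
The plan is to verify that $\ka$ is additive and satisfies the Leibniz rule, reading both off from the linearity track equations of Lemma~\ref{lem:LinTrackEq} specialized to $x = y = 1$. First I would record that $\ka$ is additive: taking $x = y = 1_{K_0}$ in the left linearity equation (2) gives $\Ga_{a+a'}^{1,1} = \Ga_a^{1,1} + \Ga_{a'}^{1,1}$, and since the group structure on $\pi_1 \cat{EM}(K_0,K_m)$ is induced by the pointwise addition on mapping spaces, this says exactly $\ka(a+a') = \ka(a) + \ka(a')$. (One should note that $\ka$ is degree $-1$, so this only makes sense within a fixed degree, which is fine.)

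Next I would turn to the product rule. Specializing equation (3) to $x = y = 1$, and using that $a \cdot 1 = a$, we get
\[
\Ga_{ba}^{1,1} = \Ga_b^{a,a} \sq b \Ga_a^{1,1}.
\]
The second term $b \Ga_a^{1,1}$ is, by definition of $\ka$ and left linearity of composition (which is \emph{strictly} linear), the class $b \cdot \ka(a)$. The first term $\Ga_b^{a,a}$ needs to be rewritten. Here I would invoke the precomposition equation (1): $\Ga_b^{a,a} = \Ga_b^{1 \cdot a, 1 \cdot a} = \Ga_b^{1,1} \cdot a$, which by definition of $\ka$ is $\ka(b) \cdot a$. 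Since pasting of tracks $\sq$ corresponds to addition in $\pi_1$, and the relevant homotopy groups are abelian, we conclude $\ka(ba) = \ka(b) a + b \ka(a)$, i.e., the Leibniz rule. I would also remark that the signs are irrelevant since we are at the prime $p = 2$.

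The main obstacle, such as it is, is bookkeeping rather than anything deep: one must be careful that the abstract ``pasting'' operation $\sq$ on tracks and the ``external addition'' $+$ of paths both descend to the single abelian group operation on $\pi_1 \cat{EM}(K_0,K_m)$, so that the equations of Lemma~\ref{lem:LinTrackEq}, which hold at the level of tracks, translate correctly into the additive identities defining a derivation. One should also double-check that the degree of $\ka(b) a + b \ka(a)$ matches the degree of $\ka(ba)$, i.e., $\deg(b) + \deg(a) - 1$, which holds since precomposition and postcomposition shift degree additively while $\ka$ contributes the single $-1$. Once these compatibilities are in place, the proof is simply the two specializations above followed by the rewriting via equation (1).
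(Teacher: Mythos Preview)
Your proposal is correct and follows essentially the same approach as the paper: linearity of $\ka$ from the left linearity equation, and the Leibniz rule from the product rule combined with the precomposition equation (to rewrite $\Ga_b^{a,a}$ as $\Ga_b^{1,1} a$). The paper's proof is only a brief sketch citing these same three equations from Lemma~\ref{lem:LinTrackEq}; you have simply filled in the details.
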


\begin{proof}
This is stated and sketched in \cite{Baues06}*{Lemma~4.5.5}. 

Linearity of $\ka$ follows from linearity of $\Ga_a^{x,y}$ in the input $a$. 
The derivation property $\ka(ba) = \ka(b) a + b \ka(a)$ follows from the precomposition equation and product rule in Lemma~\ref{lem:LinTrackEq}.
\end{proof}

\begin{prop} \label{pr:DerivationSq}
Applied to Steenrod squares, the function $\ka$ satisfies
\[
\ka(\Sq^m) = \Sq^{m-1}.
\]
In particular, $\ka$ agrees with the Kristensen derivation \cite{Kristensen63}*{\S 2}.
\end{prop}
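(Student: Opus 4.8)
The plan is to compute $\ka(\Sq^m)$ directly from the definition $\ka(a) = \Ga_a^{1,1}$ by tracking the linearity track through an explicit homotopy, using the Cartan formula as the algebraic input. First I would recall that the Steenrod square $\Sq^m \colon K_0 \to K_m$ is characterized (stably) by its effect on cohomology, and that for a map $a \colon K_0 \to K_m$ the class $\ka(a) \in \steen^{m-1}$ is detected by evaluating the linearity track $\Ga_a^{1,1}$ on the tautological class. Concretely, on the space level one has $H^*(K_0 \times K_0) \cong H^*(K_0) \ot H^*(K_0)$ with generators $\io_1, \io_2$, and the two maps $a \circ (+) $ and $(+) \circ (a \times a) \colon K_0 \times K_0 \to K_m$ send the fundamental class to $\Sq^m(\io_1 + \io_2)$ and $\Sq^m \io_1 + \Sq^m \io_2$ respectively. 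By the Cartan formula, the difference is $\sum_{i=1}^{m-1} \Sq^i \io_1 \cdot \Sq^{m-i} \io_2$, a sum of decomposable classes; the linearity track $\Ga_a^{1,1}$ is precisely a nullhomotopy of this difference realized by the $1$-distributor, and its restriction along the diagonal $\io_1 = \io_2 = \io$ picks out $\ka(a)$.

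Next I would make the restriction-to-diagonal step precise. Taking $x = y = 1_{K_0}$ in the linearity track means precomposing with the diagonal $\De \colon K_0 \to K_0 \times K_0$; on cohomology this sends $\io_1, \io_2$ both to $\io$, and it sends the difference class $\sum_{i=1}^{m-1} \Sq^i \io \cdot \Sq^{m-i} \io$ to a class in $H^m(K_0)$, but now $\Ga_a^{1,1}$ becomes a track $0 \Rightarrow 0$ in $\cat{EM}(K_0, K_m)$, i.e. an element of $\pi_1 \cat{EM}(K_0,K_m) = \steen^{m-1}$. The key identification is that this element equals $\Sq^{m-1}$. To see this I would use that over $\F_2$ the relevant obstruction/difference construction is governed by the quadratic nature of $x \mapsto \Sq^m(x)$ : more precisely, $\Sq^m$ on a one-dimensional class satisfies $\Sq^m(x)=x^2$ when $\deg x = m$, and the "polarization" $\Sq^m(x+y) - \Sq^m x - \Sq^m y$ restricted to the diagonal detects the derivative $\Sq^{m-1}$. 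This is exactly the Kristensen computation, and I would cite \cite{Kristensen63}*{\S 2} for the identification of the outcome, having reduced the topological track to Kristensen's cochain-level difference.

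Alternatively — and this may be cleaner — I would compare the two constructions directly: having recalled in Section~\ref{sec:Kristensen1Dist} that $\ka$ is independent of the choice of good $1$-distributor, and that Kristensen's derivation arises from a cochain operation computing the same linearity obstruction, it suffices to exhibit one good $1$-distributor whose associated track visibly agrees with Kristensen's. Kristensen's derivation is built from a cochain operation $D$ with $\delta D(u) = Du \cdot Su + \dots$ realizing the failure of additivity of a cocycle representative of $\Sq^m$; packaging $D$ into a homotopy on mapping spaces yields a $1$-distributor $\phy^1$ with $\{\phy_{\Sq^m}^{1,1}\} = \Sq^{m-1}$ by Kristensen's own computation. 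Since $\ka$ is well-defined independently of the choice, we get $\ka(\Sq^m) = \Sq^{m-1}$, and by Lemma~\ref{lem:Derivation} $\ka$ is a derivation, so it agrees with the Kristensen derivation on the generators $\Sq^m$, hence (since the $\Sq^m$ generate $\steen$ as an algebra and both are derivations) everywhere.

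The main obstacle I expect is the bookkeeping that bridges the topological mapping-space picture and Kristensen's cochain-level formulas : one must check that the homotopy class extracted from the good $1$-distributor is genuinely the same invariant as the class Kristensen extracts from his cochain operation, rather than merely having the same degree. This requires either (i) a careful naturality/uniqueness argument — using that $\pi_1 \cat{EM}(K_0,K_m)$ is computed by the known secondary structure and that any good $1$-distributor gives the same track by Proposition~\ref{pr:ExistNDistrib} — together with an explicit verification on $\Sq^m$; or (ii) unwinding Kristensen's $\S 2$ to see his cochain operation produces a valid $1$-distributor. Everything else (the Cartan formula computation of the difference class, the restriction along the diagonal, the passage to $\pi_1$) is routine.
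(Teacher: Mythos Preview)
The paper's own proof is a one-line citation: it refers to \cite{Baues06}*{Theorem~4.5.8}, which in turn relies on \cite{Kristensen63}. So the paper does not attempt an independent argument here; it treats the identification $\ka(\Sq^m)=\Sq^{m-1}$ as established elsewhere. Your second approach --- exhibit a good $1$-distributor coming from Kristensen's cochain operation and invoke uniqueness via Proposition~\ref{pr:ExistNDistrib} --- is essentially the strategy carried out in \cite{Baues06}, and your closing paragraph correctly identifies the real work as the bridge between the mapping-space track and Kristensen's cochain formula. In that sense your proposal and the paper agree: both defer the substance to \cite{Baues06} and \cite{Kristensen63}.

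Your first approach, however, contains a genuine error. The Cartan formula computes $\Sq^m(xy)$, not $\Sq^m(x+y)$; it says nothing about the failure of additivity. On cohomology classes $\Sq^m$ \emph{is} additive, so $\Sq^m(\io_1+\io_2) - \Sq^m\io_1 - \Sq^m\io_2 = 0$ in $H^*(K_0\times K_0;\F_2)$, and there is no ``difference class'' of the form $\sum_i \Sq^i\io_1\cdot\Sq^{m-i}\io_2$ to restrict along the diagonal. The linearity track is a homotopy between two maps that are already homotopic; its content lives one level up, in $\pi_1$ of the mapping space, not in a cohomological difference. Kristensen's computation does proceed via cochain-level cross terms (arising from cup-$i$ products in a cochain representative of $\Sq^m$), and your ``polarization'' intuition is pointing in that direction, but those cross terms are not given by the Cartan formula and the passage from them to $\Sq^{m-1}$ is precisely the nontrivial calculation you would have to reproduce from \cite{Kristensen63} or \cite{Baues06}. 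So drop the Cartan-formula paragraph and keep the comparison-of-constructions argument, citing \cite{Baues06}*{Theorem~4.5.8} for the actual identification.
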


\begin{proof}
This is proved in \cite{Baues06}*{Theorem 4.5.8}, using work in \cite{Kristensen63}.
\end{proof}

The existence of the derivation $\ka \colon \steen \to \steen$ is a non-trivial property of the Steenrod algebra, which can be checked explicitly using the Adem relations; c.f. \cite{Kristensen63}*{\S 2}.

\begin{rem}
It was pointed out to us by Fernando Muro that the Kristensen derivation $\ka \colon \steen \to \steen$ is also obtained from %
\cite{BauesM11}*{\S 1}. More precisely, consider the ring spectrum $R = \End_S(H\F_p, H\F_p)$, the endomorphism ring spectrum of $H\F_p$ as a module over the sphere spectrum $S$. The homotopy groups of $R$ are the Steenrod algebra with reversed grading:
\[
\pi_m R = [ S^m \sm H\F_p, H\F_p ] \cong [ H\F_p, S^{-m} \sm H\F_p ] = \steen^{-m}.
\]
The unit map $\eta \colon S \to R$ induces on homotopy the map
\[
\xymatrix{
\Z \cong \pi_0 S \ar[r]^-{\pi_0 \eta} & \pi_0 R \cong \F_p \\
}
\]
which sends $1$ to $1$. %
Taking the class $p \in \pi_0 S$ which lies in the kernel of $\pi_0 \eta$, the construction in \cite{BauesM11}*{\S 1} yields a function
\[
\te(p) \colon \pi_m R \to \pi_{m+1} R
\]
which %
is independent of the choice of nullhomotopy of $p 1_{H\F_p}$, %
because the indeterminacy lives in $\pi_{0+1} R = \steen^{-1} = 0$. The function $\te(p)$ sends a class $a \in \pi_m R$ to a certain self-track of zero $a p \Ra p a$. In the case $p=2$, this track coincides with the linearity track %
$\{ \phy_a^{1,1} \} \colon a2 = a(1+1) \Ra (1+1)a = 2a$.%
\end{rem}

\subsection{Linearity $2$-tracks}

\begin{defn}
Let $\cat{T}$ be a weakly bilinear mapping theory with Serre cofibrant mapping spaces. Let $\phy^2$ be a good $2$-distributor for $\cat{T}$. We call the homotopy class rel $\del I^2$ of $\phy_a^{x,y,z} \colon I^2 \to \cat{T}$, denoted $\{ \phy_a^{x,y,z} \}$, a \Def{linearity $2$-track}.
\end{defn}

Again by Proposition~\ref{pr:ExistNDistrib}, $\{ \phy_{a}^{x,y,z} \}$ is determined by the underlying $1$-distributor $\phy^1$. In this subsection, we work out a few equations satisfied by the linearity $2$-tracks, analogous to the equations satisfied by the linearity $1$-tracks $\{ \phy_a^{x,y} \}$ listed in Lemma~\ref{lem:LinTrackEq}.

\begin{lem}[Precomposition] \label{lem:2TrackEqPrecompo}
For all inputs $a,x,y,z,d \in \cat{T}$, %
the following equation of $2$-tracks holds: 
$\{ \phy_{a}^{xd,yd,zd} \} = \{ \phy_{a}^{x,y,z} \}d$.
\end{lem}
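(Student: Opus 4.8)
The plan is to reduce everything to the universality property of good distributors, Definition~\ref{def:Good}~\eqref{eq:Universality}, which says $\phy_a^{x_0,x_1,x_2} = \phy_a^{p_0,p_1,p_2} \ot (x_0,x_1,x_2)$ for the projections $p_i \colon A^3 \to A$. First I would apply this with the inputs $x_0 = xd$, $x_1 = yd$, $x_2 = zd$, obtaining
\[
\phy_{a}^{xd,yd,zd} = \phy_a^{p_0,p_1,p_2} \ot (xd, yd, zd).
\]
The key observation is that the tuple $(xd,yd,zd) \colon X \to A^3$ factors as $(x,y,z) \circ d$, i.e. $(xd,yd,zd) = (x,y,z)\, d$ as morphisms in $\cat{T}$ (where $d \colon X' \to X$). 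Since $\ot$-composition is just applying the composition map $\mu$ of the enriched category after taking products of cubes, it is strictly associative in the evident sense: for a $0$-cube $d$ one has $(\gamma \ot (x,y,z)) \ot d = \gamma \ot ((x,y,z) d)$ for any $2$-cube $\gamma$. Applying universality again (now with inputs $x_0 = x$, $x_1 = y$, $x_2 = z$) gives $\phy_a^{p_0,p_1,p_2} \ot (x,y,z) = \phy_a^{x,y,z}$, so altogether
\[
\phy_{a}^{xd,yd,zd} = \phy_a^{x,y,z} \ot d = \phy_a^{x,y,z}\, d
\]
as $2$-cubes in $\cat{T}(X',B)$, using the abuse of notation $\gamma d := \gamma \ot d$ for a $0$-cube $d$.

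Having established the equality at the level of $2$-cubes, I would then pass to homotopy classes rel $\del I^2$. This requires knowing that precomposition with a fixed $0$-cube $d$ (i.e. $\ot$-composition with $d$, which is induced by the continuous map $\mu(-, d) \colon \cat{T}(X,B) \to \cat{T}(X',B)$) descends to a well-defined operation on $2$-tracks: if $\gamma \simeq \gamma'$ rel $\del I^2$ then $\gamma d \simeq \gamma' d$ rel $\del I^2$, since one can post-compose the homotopy with $\mu(-,d)$. Hence $\{\gamma d\} = \{\gamma\} d$ is a well-defined operation, and applying $\{-\}$ to the displayed $2$-cube identity yields $\{\phy_{a}^{xd,yd,zd}\} = \{\phy_a^{x,y,z}\} d$, as desired.

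I expect no serious obstacle here — this is the direct analogue of the precomposition equation for $1$-tracks in Lemma~\ref{lem:LinTrackEq}(1), and the only thing to be careful about is the bookkeeping of strict associativity of $\ot$-composition with $0$-cubes and the functoriality of $\ot$ under the factorization $(xd,yd,zd) = (x,y,z)d$. One should double-check that the goodness hypothesis is genuinely needed: it is, because the argument rests entirely on universality, which is part of being good; without it, $\phy_a^{xd,yd,zd}$ need not equal $\phy_a^{p_0,p_1,p_2} \ot (xd,yd,zd)$. The boundary conditions of the $2$-distributor are respected automatically since both sides have the same boundary (being precompositions of the respective obstruction maps with $d$), so no extra verification is needed there.
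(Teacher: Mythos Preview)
Your proposal is correct and follows essentially the same approach as the paper: apply universality to write $\phy_{a}^{xd,yd,zd} = \phy_a^{p_0,p_1,p_2} \ot (xd,yd,zd)$, factor $(xd,yd,zd) = (x,y,z) \ot d$, and apply universality again to obtain the equality of $2$-cubes $\phy_{a}^{xd,yd,zd} = \phy_a^{x,y,z} \ot d$. The paper's proof is in fact terser than yours, omitting the discussion of passing to tracks since the equality already holds at the level of $2$-cubes.
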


\begin{proof}
Since $\phy^2$ satisfies the universality condition, the equality holds even at the level of $2$-cubes, not merely $2$-tracks:
\begin{align*}
\phy_{a}^{xd,yd,zd} &= \phy_{a}^{p_0, p_1, p_2} \ot (xd, yd, zd) \\
&= \phy_{a}^{p_0, p_1, p_2} \ot (x, y, z) \ot d \\
&= \phy_{a}^{x, y, z} \ot d. \qedhere
\end{align*}
\end{proof}

Next, we work out the analogue of the left linearity equation of $1$-tracks $\Ga_{a+a'}^{x,y} = \Ga_{a}^{x,y} + \Ga_{a'}^{x,y}$. Note that $\{ \phy_{a+a'}^{x,y,z} \}$ and $\{ \phy_{a}^{x,y,z} \} + \{ \phy_{a'}^{x,y,z} \}$ are usually different, since they do not agree on the boundary $\del I^2$. To compare them, we need some correction $2$-tracks.

\begin{nota} \label{nota:LeftLinWitness}
Consider maps $a,a' \colon A \to B$ in $\cat{T}$. By the argument in Proposition~\ref{pr:ExistNDistrib}, there exists a path homotopy
\[
L_{a,a'}^{p_0,p_1} \colon \phy_{a+a'}^{p_0,p_1} \Ra \phy_{a}^{p_0,p_1} + \phy_{a'}^{p_0,p_1}
\]
between paths in $\cat{T}(A \x A, B)$ such that for all $k$, the restriction $i_k^* L_{a,a'}^{p_0,p_1}$ is the constant $2$-cube at $a+a' \in \cat{T}(A,B)$. Moreover, such a path homotopy is unique up to homotopy rel $\del I^2$, i.e., yields a well-defined globular $2$-track $\{ L_{a,a'}^{p_0,p_1} \}$. For arbitrary maps $x,y \colon X \to A$ in $\cat{T}$, define
\[
L_{a,a'}^{x,y} := L_{a,a'}^{p_0,p_1} \ot (x,y),
\]
which is a path homotopy in $\cat{T}(X,B)$ as illustrated here: 
\[
\begin{tikzcd}
a(x+y) + a'(x+y) \ar[rr, "\phy_a^{x,y} + \phy_{a'}^{x,y}" \EdgeColor] & \mbox{} & ax + ay + a'x + a'y \\
(a+a')(x+y) \ar[u, equal, line width=0.12ex, double distance=0.4ex] \ar[rr, "\phy_{a+a'}^{x,y}"' \EdgeColor] & \mbox{} \ar[u, Rightarrow, line width=0.14ex, double distance=0.5ex, "L_{a,a'}^{x,y}"' \FaceColor] & (a+a')x + (a+a')y \ar[u, equal, line width=0.12ex, double distance=0.4ex] \\
\end{tikzcd}
\]

\end{nota}

\begin{lem}[Left linearity] \label{lem:2TrackEqLeftLin}
The $2$-track illustrated in Figure~\ref{fig:Sum2Tracks} is equal to $\{ \phy_{a}^{x,y,z} \} + \{ \phy_{a'}^{x,y,z} \}$.

\begin{figure}[ht]
\hspace*{-0.6cm}
\begin{tikzpicture}[scale=1.8]
\draw [fill=\VertexColor] (0,0) circle (\VertexSize);
\draw [fill=\VertexColor] (0,2) circle (\VertexSize);
\draw [fill=\VertexColor] (2,0) circle (\VertexSize);
\draw [fill=\VertexColor] (2,2) circle (\VertexSize);
\draw [->-=0.5] (0,0) -- (0,2);
\draw [->-=0.5] (0,2) -- (2,2);
\draw [->-=0.5] (0,0) -- (2,0);
\draw [->-=0.5] (2,0) -- (2,2);
\draw [->-=0.5] (0,0) arc (270:90:1);
\draw [->-=0.5] (0,2) arc (180:0:1);
\draw [->-=0.5] (0,0) arc (180:360:1);
\draw [->-=0.5] (2,0) arc (-90:90:1);
\DoubleArrow (-0.3,1) -- (-0.7,1);
\DoubleArrow (0.5,2.3) -- (0.5,2.7);
\DoubleArrow (1,-0.3) -- (1,-0.7);
\DoubleArrow (2.3,1) -- (2.7,1);
\node [below left, align=right] at (0,0) {$(a+a')(x+y+z)$\\$= a(x+y+z) + a'(x+y+z)$};
\node [above left, align=right] at (0,2) {$(a+a')(x + y) + (a+a')z$\\$= a(x+y) + az + a'(x+y) + a'z$};
\node [below right, align=left] at (2,0) {$(a+a')x + (a+a')(y + z)$\\$= ax + a(y+z) + a'x + a'(y+z)$.};
\node [above right, align=left] at (2,2) {$(a+a')x + (a+a')y + (a+a')z$\\$= ax + ay + az + a'x + a'y + a'z$};
\node [above, text=\EdgeColor] at (1,0) {$\phy_{a+a'}^{x,y + z}$};
\node [below, text=\EdgeColor] at (1,2) {$\phy_{a+a'}^{x,y} + (a+a')z$};
\node [above right, text=\EdgeColor] at (0,1) {$\phy_{a+a'}^{x + y, z}$};
\node [below, text=\EdgeColor] at (2,1) {$(a+a')x + \phy_{a+a'}^{y, z}$};
\node [left, text=\EdgeColor] at (-1,1) {$\phy_{a}^{x + y, z} + \phy_{a'}^{x + y, z}$};
\node [above, text=\EdgeColor] at (1,3) {$\phy_{a}^{x,y} + \phy_{a'}^{x,y} + (a+a')z$};
\node [below, text=\EdgeColor] at (1,-1) {$\phy_{a}^{x,y + z} + \phy_{a'}^{x,y + z}$};
\node [below right, text=\EdgeColor] at (3,1) {$(a+a')x + \phy_{a}^{y,z} + \phy_{a'}^{y,z}$};
\node [text=\FaceColor] at (1,1) {$\phy_{a+a'}^{x, y, z}$};
\node [above, text=\FaceColor] at (-0.5,1) {$L_{a,a'}^{x+y, z}$};
\node [right, text=\FaceColor] at (0.5,2.5) {$L_{a,a'}^{x, y} + (a+a')z$};
\node [right, text=\FaceColor] at (1,-0.5) {$L_{a,a'}^{x, y+z}$};
\node [above, text=\FaceColor] at (2.5,1) {$(a+a')x + L_{a,a'}^{y,z}$};
\end{tikzpicture}
\caption{Relating the $2$-tracks $\{ \phy_{a+a'}^{x,y,z} \}$  and $\{ \phy_{a}^{x,y,z} \} + \{ \phy_{a'}^{x,y,z} \}$.}
\label{fig:Sum2Tracks}
\end{figure}
\end{lem}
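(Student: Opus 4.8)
The plan is to reduce to the universal case and then construct a single $3$-cube whose boundary assembles all of Figure~\ref{fig:Sum2Tracks}. By the universality property (Definition~\ref{def:Good} \eqref{eq:Universality}), every $2$-cube appearing in the figure is $\ot(x,y,z)$ applied to its counterpart for the projections $p_0,p_1,p_2\colon A^3\to A$, and the same holds for each correction track $L_{a,a'}^{\bullet}$ by the very definition in Notation~\ref{nota:LeftLinWitness}. Moreover $\ot(x,y,z)$ distributes over the pointwise sums by left linearity of composition (Lemma~\ref{lem:LeftLinCubes}) and is compatible with pasting of cubes and with homotopies rel boundary. Hence it suffices to prove the equality of $2$-tracks for $x=p_0$, $y=p_1$, $z=p_2$, working entirely with cubes in $\cat{T}(A^3,B)$.

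In this universal case I would exhibit a $3$-cube $\Phi\colon I^3\to\cat{T}(A^3,B)$ with the following boundary: the face $t_3=0$ is the front $2$-cube $\phy_{a+a'}^{p_0,p_1,p_2}$, the face $t_3=1$ is the back $2$-cube $\phy_{a}^{p_0,p_1,p_2}+\phy_{a'}^{p_0,p_1,p_2}$, and the four faces $t_1,t_2\in\{0,1\}$ are the correction $2$-tracks $L_{a,a'}^{p_0+p_1,p_2}$, $L_{a,a'}^{p_0,p_1}+(a+a')p_2$, $L_{a,a'}^{p_0,p_1+p_2}$ and $(a+a')p_0+L_{a,a'}^{p_1,p_2}$, each oriented from the $\phy_{a+a'}$-side to the $(\phy_a+\phy_{a'})$-side. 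One first checks that these six faces glue to a well-defined map $\del I^3\to\cat{T}(A^3,B)$: the twelve edges match because the edges of $\phy_{a+a'}^{p_0,p_1,p_2}$ and of $\phy_{a}^{p_0,p_1,p_2}+\phy_{a'}^{p_0,p_1,p_2}$ are the paths dictated by the boundary condition~\eqref{eq:Boundary}, while each $L_{a,a'}^{\bullet}$ is a globular $2$-track whose two non-constant edges are exactly the corresponding matching pair of such paths; the eight vertices match because composition is strictly left linear and the addition in the mapping spaces is commutative (recall that the mapping spaces of $\cat{EM}$ are topological abelian groups). Such a $\Phi$ exhibits, as claimed, the pasting of $\phy_{a+a'}^{p_0,p_1,p_2}$ with the four correction tracks as homotopic rel boundary to $\phy_{a}^{p_0,p_1,p_2}+\phy_{a'}^{p_0,p_1,p_2}$.

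To produce $\Phi$, recall that $(i_0^*,i_1^*,i_2^*)\colon\cat{T}(A^3,B)\to\cat{T}(A,B)^3$ is a trivial Serre fibration (as used in the proof of Proposition~\ref{pr:ExistNDistrib}), so it is enough to extend the composite $\del I^3\to\cat{T}(A^3,B)\to\cat{T}(A,B)^3$ over $I^3\cong D^3$. I claim this composite is the constant map at $(a+a',a+a',a+a')$. Indeed, $i_k^*$ carries $\phy_{a}^{p_0,p_1,p_2}$ and $\phy_{a'}^{p_0,p_1,p_2}$ to the constant $2$-cubes at $a$ and $a'$ by goodness of $\phy^2$ (Lemma~\ref{lem:UnivWedge}), hence their pointwise sum and $\phy_{a+a'}^{p_0,p_1,p_2}$ to the constant $2$-cube at $a+a'$; and a direct computation with the $\ot$-composition shows that $i_k^*$ applied to each correction face — for instance $i_k^*\big(L_{a,a'}^{p_0+p_1,p_2}\big)=L_{a,a'}^{q_0,q_1}\ot\big((p_0+p_1,p_2)\circ i_k\big)$, where $q_0,q_1$ are the projections of $A^2$ and $(p_0+p_1,p_2)\circ i_k$ is a coordinate inclusion (or the zero map) of $A$ into $A^2$ — is again the constant $2$-cube at $a+a'$, by the defining property of $L_{a,a'}^{q_0,q_1}$ in Notation~\ref{nota:LeftLinWitness}. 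The constant map on $D^3$ is thus a lift along the bottom arrow, and any filler $\Phi\colon I^3\to\cat{T}(A^3,B)$ of the resulting square completes the proof.

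The main obstacle is purely bookkeeping: keeping the orientations of the front, back and the four correction faces mutually consistent and consistent with Figure~\ref{fig:Sum2Tracks}, and verifying the edge and vertex identifications on $\del I^3$. Conceptually nothing is new here — the argument is exactly the lifting argument of Proposition~\ref{pr:ExistNDistrib} run one dimension higher, with the wedge condition on $\phy^2$ replaced by its consequence for the correction tracks $L_{a,a'}^{\bullet}$.
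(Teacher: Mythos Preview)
Your proposal is correct and follows essentially the same approach as the paper. The paper's proof reduces to the universal case, observes that the illustrated $2$-track and $\{\phy_a^{x,y,z}\}+\{\phy_{a'}^{x,y,z}\}$ share the same boundary and both become constant under each $i_k^*$, and then invokes the uniqueness argument of Proposition~\ref{pr:ExistNDistrib}; your version simply unpacks that uniqueness argument explicitly as a $\del I^3\to I^3$ lifting problem along the trivial fibration $(i_0^*,i_1^*,i_2^*)$, with the six faces spelled out.
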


\begin{proof}
The illustrated $2$-track and $\{ \phy_{a}^{x,y,z} \} + \{ \phy_{a'}^{x,y,z} \}$ have the same restriction to the boundary $\del I^2$, namely $\obs{\phy^1}_{a}^{x,y,z} + \obs{\phy^1}_{a'}^{x,y,z}$. 
When all inputs $x,y,z$ are zero except one $x_k$, then the illustrated $2$-track and $\{ \phy_{a}^{x,y,z} \} + \{ \phy_{a'}^{x,y,z} \}$ are both the constant $2$-track at $ax_k + a'x_k = (a+a')x_k \in \cat{T}(X,B)$. 
By universality, it suffices to prove the claim in the case $x_i = p_i \colon A^3 \to A$. 
The claimed equality of $2$-tracks then follows from the uniqueness argument in Proposition~\ref{pr:ExistNDistrib}.
\end{proof}

Next, we turn to the product rule. The linearity $1$-tracks $\Ga_a^{x,y}$ satisfy the equation
\[
\Ga_{ba}^{x,y} = \Ga_{b}^{ax,ay} \sq b \Ga_{a}^{x,y}.
\]
As before, let us exhibit a canonical globular $2$-track that witnesses this equality of $1$-tracks.

\begin{nota} \label{nota:ProductWitness}
Consider maps $a \colon A \to B$ and $b \colon B \to C$ in $\cat{T}$. Denote by 
\[
P_{b,a}^{x,y} \colon \phy_{b}^{ax,ay} \sq  b \phy_{a}^{x,y} \Ra \phy_{ba}^{x,y}
\]
the path homotopy in $\cat{T}(X,C)$ as illustrated here:
\[
\begin{tikzcd}
& \mbox{} & \\
ba(x+y) \ar[rr, bend left=40, "\phy_{ba}^{x,y}" \EdgeColor] \ar[r, "b \phy_{a}^{x,y}"' \EdgeColor] & b (ax + ay) \ar[r, "\phy_{b}^{ax,ay}"' \EdgeColor] \ar[u, Rightarrow, line width=0.15ex, double distance=0.55ex, "P_{b,a}^{x,y}"' \FaceColor] & bax + bay \\
\end{tikzcd}
\]
defined similarly to Notation~\ref{nota:LeftLinWitness}, yielding a well-defined globular $2$-track $\{ P_{b,a}^{x,y} \}$.%
\end{nota}

Distributors can be generalized by letting the inputs $x_i \in \cat{T}$ be continuous families instead of points, and then applying the distributor pointwise. 
We make this precise in the following notation.

\begin{nota}
Let $a \colon A \to B$ be a map in $\cat{T}$, and $v \colon V \to \cat{T}(X,A)$ and $w \colon W \to \cat{T}(X,A)$ maps of spaces. As in Definition~\ref{def:ProdCubes}, the external addition $v \op w \colon V \x W \to \cat{T}(X,A)$ is the composite
\[
\xymatrix{
V \x W \ar[r]^-{v \x w} & \cat{T}(X,A) \x \cat{T}(X,A) \ar[r]^-{+} & \cat{T}(X,A). \\ 
}
\]
The $1$-distributor $\phy_a^1$ %
applied to the inputs $v$ and $w$ is the map $\phy_a^{v,w} \colon V \x W \x I \to \cat{T}(X,B)$ defined as the composite
\[
\xymatrix @C=3pc {
V \x W \x I \ar[r]^-{v \x w \x \id} & \cat{T}(X,A) \x \cat{T}(X,A) \x I \ar[r]^-{\phy_a^1} & \cat{T}(X,B), \\
}
\]
viewed as a homotopy from $a(v \op w)$ to $av \op aw$. 
\end{nota}

\begin{lem}[Product rule] \label{lem:2TrackEqProduct}
The $2$-track illustrated in Figure~\ref{fig:Product2Track} is equal to $\{ \phy_{ba}^{x,y,z} \}$.
\begin{figure}[ht]
\begin{tikzpicture}[scale=1.8]
\draw [fill=\VertexColor] (0,0) circle (\VertexSize);
\draw [fill=\VertexColor] (0,2) circle (\VertexSize);
\draw [fill=\VertexColor] (0,4) circle (\VertexSize);
\draw [fill=\VertexColor] (2,0) circle (\VertexSize);
\draw [fill=\VertexColor] (2,2) circle (\VertexSize);
\draw [fill=\VertexColor] (2,4) circle (\VertexSize);
\draw [fill=\VertexColor] (4,0) circle (\VertexSize);
\draw [fill=\VertexColor] (4,2) circle (\VertexSize);
\draw [fill=\VertexColor] (4,4) circle (\VertexSize);
\draw [->-=0.5] (0,0) -- (2,0);
\draw [->-=0.5] (2,0) -- (4,0);
\draw [->-=0.5] (0,2) -- (2,2);
\draw [->-=0.5] (2,2) -- (4,2);
\draw [->-=0.5] (0,4) -- (2,4);
\draw [->-=0.5] (2,4) -- (4,4);
\draw [->-=0.54] (0,0) -- (0,2);
\draw [->-=0.54] (0,2) -- (0,4);
\draw [->-=0.54] (2,0) -- (2,2);
\draw [->-=0.54] (2,2) -- (2,4);
\draw [->-=0.54] (4,0) -- (4,2);
\draw [->-=0.54] (4,2) -- (4,4);
\draw [->-=0.42] (0,0) arc (225:135:{2*sqrt(2)});
\draw [->-=0.5] (0,4) arc (135:45:{2*sqrt(2)});
\draw [->-=0.5] (0,0) arc (225:315:{2*sqrt(2)});
\draw [->-=0.42] (4,0) arc (-45:45:{2*sqrt(2)});
\DoubleArrow (0.6,2.8) -- (0.6,3.2);
\DoubleArrow (2.8,1.3) -- (3.2,1.3);
\DoubleArrow (-0.3,2.1) -- (-0.7,2.1);
\DoubleArrow (1.5,4.3) -- (1.5,4.7);
\DoubleArrow (2,-0.3) -- (2,-0.7);
\DoubleArrow (4.3,2.1) -- (4.7,2.1);
\node [below left, align=right] at (0,0) {$ba(x+y+z)$};
\node [below left, align=right] at (0,2) {$b \left( a(x+y)+az \right)$};
\node [above left, align=right] at (0,4) {$ba(x+y)+ baz$};
\node [below, align=center] at (2,0) {$b \left( ax + a(y+z) \right)$};
\node [above, align=center] at (2,2) {$b (ax + ay + az)$};
\node [above, align=center] at (2,4) {$b (ax+ay)+ baz$};
\node [below right, align=left] at (4,0) {$bax + ba(y+z)$.};
\node [below right, align=left] at (4,2) {$bax + b(ay+az)$};
\node [above right, align=left] at (4,4) {$bax + bay + baz$};
\node [below, text=\EdgeColor] at (0,1) {$b \phy_{a}^{x + y, z}$};
\node [below, text=\EdgeColor] at (0,3) {$\phy_{b}^{a(x+y), az}$};
\node [below, text=\EdgeColor] at (2,1) {$b (ax + \phy_a^{y,z})$};
\node [below, text=\EdgeColor] at (2,3) {$\phy_b^{ax+ay,az}$};
\node [below, text=\EdgeColor] at (4,1) {$bax + b \phy_a^{y,z}$};
\node [below, text=\EdgeColor] at (4,3) {$bax + \phy_b^{ay,az}$};
\node [above, text=\EdgeColor] at (1,0) {$b \phy_{a}^{x,y+z}$};
\node [above, text=\EdgeColor] at (3,0) {$\phy_{b}^{ax,a(y+z)}$};
\node [below, text=\EdgeColor] at (1,2) {$b (\phy_{a}^{x,y} + az)$};
\node [below, text=\EdgeColor] at (3,2) {$\phy_{b}^{ax,ay+az}$};
\node [below, text=\EdgeColor] at (1,4) {$b \phy_{a}^{x,y} + baz$};
\node [below, text=\EdgeColor] at (3,4) {$\phy_{b}^{ax,ay} + baz$};
\node [left, text=\EdgeColor] at (-1,1.5) {$\phy_{ba}^{x+y, z}$};
\node [text=\EdgeColor] at (2,5) {$\phy_{ba}^{x,y} + baz$};
\node [text=\EdgeColor] at (2,-1) {$\phy_{ba}^{x,y+z}$};
\node [right, text=\EdgeColor] at (5,1.5) {$bax + \phy_{ba}^{y,z}$};
\node [text=\FaceColor] at (1,1) {$b \phy_{a}^{x,y,z}$};
\node [text=\FaceColor] at (1,3) {$\phy_{b}^{\phy_{a}^{x,y}, az}$};
\node [text=\FaceColor] at (3,1) {$\phy_{b}^{ax, \phy_{a}^{y,z}}$};
\node [text=\FaceColor] at (3,3) {$\phy_{b}^{ax,ay,az}$};
\node [above, text=\FaceColor] at (-0.5, 2.1) {$P_{b,a}^{x+y, z}$};
\node [right, text=\FaceColor] at (1.5, 4.5) {$P_{b,a}^{x,y} + baz$};
\node [right, text=\FaceColor] at (2, -0.5) {$P_{b,a}^{x, y+z}$};
\node [above, text=\FaceColor] at (4.5, 2.1) {$bax + P_{b,a}^{y,z}$};
\end{tikzpicture}
\caption{Relating the $2$-track $\{ \phy_{ba}^{x,y,z} \}$ to $\{ \phy_{a}^{x,y,z} \}$ and $\{ \phy_{b}^{ax,ay,az} \}$.}
\label{fig:Product2Track}
\end{figure}
\end{lem}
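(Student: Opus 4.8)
The plan is to follow the same strategy used in the proof of the left linearity equation (Lemma~\ref{lem:2TrackEqLeftLin}): show that the $2$-track depicted in Figure~\ref{fig:Product2Track} and the $2$-track $\{ \phy_{ba}^{x,y,z} \}$ agree on the boundary $\del I^2$, check that both are constant when all but one of the inputs $x,y,z$ vanish, reduce to the universal case $x_i = p_i \colon A^3 \to A$, and invoke the uniqueness clause of Proposition~\ref{pr:ExistNDistrib}.

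First I would identify the boundary of the composite $2$-track in the figure. Each of the four edges of the central $2$-cube $b\phy_a^{x,y,z}$, $\phy_b^{\phy_a^{x,y},az}$, $\phy_b^{ax,\phy_a^{y,z}}$, $\phy_b^{ax,ay,az}$ is glued to a product-witness $2$-track $P_{b,a}^{\place,\place}$ along a suitable edge, and the outer boundary of the resulting pasting is exactly the obstruction map $\obs{\phy^1}_{ba}^{x,y,z}$: explicitly, the bottom composite unwinds to $\phy_{ba}^{x,y+z}$, the top to $\phy_{ba}^{x,y}\op baz$, the left to $\phy_{ba}^{x+y,z}$, and the right to $bax\op\phy_{ba}^{y,z}$. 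This follows from the product-rule equation for linearity $1$-tracks (Lemma~\ref{lem:LinTrackEq}(3)) applied at each edge of $\del I^2$, together with the fact that $\phy^1$ is a $1$-distributor so that $\phy_a^{x,y,z}\vert_{\del I^2}=\obs{\phy^0}_a^{x,y,z}$ and $\phy_b^{ax,ay,az}\vert_{\del I^2}=\obs{\phy^0}_b^{ax,ay,az}$. Thus the figure-$2$-track and $\{\phy_{ba}^{x,y,z}\}$ have the same restriction to $\del I^2$.

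Next I would handle the wedge-type condition. When all of $x,y,z$ are zero except one, say $x_k$, the central $2$-cube is constant at $bax_k$ because $\phy^2$ is good for both $a$ and $b$ (wedge condition), each of the four surrounding $P_{b,a}$ $2$-tracks is constant at $bax_k$ by the defining property in Notation~\ref{nota:ProductWitness} (or by the precomposition equation, since $P_{b,a}^{x,y}=P_{b,a}^{p_0,p_1}\ot(x,y)$ and one of the inputs is zero), and $\{\phy_{ba}^{x,y,z}\}$ is likewise constant at $bax_k$ by goodness of $\phy^2$ for $ba$. So both $2$-tracks are the constant $2$-track $\cst_{bax_k}$. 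By universality of $\phy^2$ (and of the $P$-tracks), it suffices to verify the equality in the universal case $x_i=p_i\colon A^3\to A$, $x=p_0,y=p_1,z=p_2$. There, both $2$-tracks are extensions of the common boundary $\obs{\phy^1}_{ba}^{p_0,p_1,p_2}\colon\del I^2\to\cat{T}(A^3,C)$ whose composite with $(i_0^*,i_1^*,i_2^*)$ is constant at $(ba,ba,ba)$; by the uniqueness statement in Proposition~\ref{pr:ExistNDistrib}, any two such good extensions are equal as $2$-tracks. Hence the two $2$-tracks coincide.

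The main obstacle I anticipate is the bookkeeping in the first step: verifying that the composite pasting in Figure~\ref{fig:Product2Track} genuinely restricts to $\obs{\phy^1}_{ba}^{x,y,z}$ on $\del I^2$ requires carefully tracing each edge of the central square, together with the matching $P_{b,a}$-edge, through the product-rule identity $\Ga_{ba}^{x,y}=\Ga_b^{ax,ay}\sq b\Ga_a^{x,y}$ and its variants with one summand grouped. The continuity of all the data (so that the construction is natural in $a,b,x,y,z$) and the compatibility of the $P_{b,a}$-tracks with $\ot$-composition on the nose are routine but must be stated; everything else is a direct transcription of the argument in Lemma~\ref{lem:2TrackEqLeftLin}.
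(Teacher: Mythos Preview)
Your proposal is correct and follows essentially the same approach as the paper, which simply says the proof is similar to that of Lemma~\ref{lem:2TrackEqLeftLin}. The boundary check, the wedge-type constancy, the reduction to the universal case $x_i=p_i$, and the appeal to the uniqueness clause of Proposition~\ref{pr:ExistNDistrib} are exactly the ingredients the paper has in mind.
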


\begin{proof}
This is similar to the proof of Lemma~\ref{lem:2TrackEqLeftLin}.
\end{proof}

\begin{lem} \label{lem:FlatBox}
Consider a map $a \colon A \to B$, a $2$-cube $u \colon I^2 \to \cat{T}(X,A)$, a point $y \in \cat{T}(X,A)$. Then the $2$-track illustrated in Figure~\ref{fig:FlatBox} is equal to $\{ au + ay \}$.
\begin{figure}[ht]
\begin{tikzpicture}[scale=1.4]
\draw [->-=0.5] (0,0) -- (6,0);
\draw [->-=0.5] (2,2) -- (4,2);
\draw [->-=0.5] (2,4) -- (4,4);
\draw [->-=0.5] (0,6) -- (6,6);
\draw [->-=0.5] (0,0) -- (0,6);
\draw [->-=0.5] (2,2) -- (2,4);
\draw [->-=0.5] (4,2) -- (4,4);
\draw [->-=0.5] (6,0) -- (6,6);
\draw [->-=0.5] (2,2) -- (0,0);
\draw [->-=0.5] (4,2) -- (6,0);
\draw [->-=0.5] (2,4) -- (0,6);
\draw [->-=0.5] (4,4) -- (6,6);
\draw [fill=\VertexColor] (2,2) circle (\VertexSize);
\node [below left] at (2,2) {$a(u_{00} + y)$};
\draw [fill=\VertexColor] (4,2) circle (\VertexSize);
\node [below right] at (4,2) {$a(u_{10} + y)$};
\draw [fill=\VertexColor] (2,4) circle (\VertexSize);
\node [above left] at (2,4) {$a(u_{01} + y)$};
\draw [fill=\VertexColor] (4,4) circle (\VertexSize);
\node [above right] at (4,4) {$a(u_{11} + y)$};
\draw [fill=\VertexColor] (0,0) circle (\VertexSize);
\node [below left] at (0,0) {$au_{00} + ay$};
\draw [fill=\VertexColor] (6,0) circle (\VertexSize);
\node [below right] at (6,0) {$au_{10} + ay$};
\draw [fill=\VertexColor] (0,6) circle (\VertexSize);
\node [above left] at (0,6) {$au_{01} + ay$};
\draw [fill=\VertexColor] (6,6) circle (\VertexSize);
\node [above right] at (6,6) {$au_{11} + ay$};
\DoubleArrow (2.6,1.2) -- (2.6,0.8);
\DoubleArrow (0.9,2.6) -- (0.5,2.6);
\DoubleArrow (5.1,2.6) -- (5.5,2.6);
\DoubleArrow (2.6,4.8) -- (2.6,5.2);
\node [below, text=\EdgeColor] at (3,2) {$a(u_{I0} + y)$}; 
\node [above, text=\EdgeColor] at (3,4) {$a(u_{I1} + y)$};
\node [left, text=\EdgeColor] at (2,3) {$a(u_{0I} + y)$};
\node [right, text=\EdgeColor] at (4,3) {$a(u_{1I} + y)$};
\node [below, text=\EdgeColor] at (3,0) {$au_{I0} + ay$}; 
\node [above, text=\EdgeColor] at (3,6) {$au_{I1} + ay$};
\node [left, text=\EdgeColor] at (0,3) {$au_{0I} + ay$};
\node [right, text=\EdgeColor] at (6,3) {$au_{1I} + ay$};
\node [below right, text=\EdgeColor] at (1,1) {$\phy_{a}^{u_{00},y}$}; 
\node [below left, text=\EdgeColor] at (5,1) {$\phy_{a}^{u_{10},y}$}; 
\node [above right, text=\EdgeColor] at (1,5) {$\phy_{a}^{u_{01},y}$}; 
\node [above left, text=\EdgeColor] at (5,5) {$\phy_{a}^{u_{11},y}$}; 
\node [text=\FaceColor] at (3,3) {$a(u+y)$};
\node [text=\FaceColor] at (3,1) {$\phy_{a}^{u_{I0},y}$};
\node [below left, text=\FaceColor] at (1,3) {$\phy_{a}^{u_{0I},y}$};
\node [below right, text=\FaceColor] at (5,3) {$\phy_{a}^{u_{1I},y}$};
\node [text=\FaceColor] at (3,5) {$\phy_{a}^{u_{I1},y}$};
\end{tikzpicture}
\caption{Comparing the $2$-tracks $\{ a(u+y) \}$ and $\{ au + ay \}$.}
\label{fig:FlatBox}
\end{figure}
\end{lem}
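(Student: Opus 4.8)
The statement to prove is Lemma~\ref{lem:FlatBox}: given $a \colon A \to B$, a $2$-cube $u \colon I^2 \to \cat{T}(X,A)$, and a point $y \in \cat{T}(X,A)$, the $2$-track built by pasting the four ``prism'' faces $\phy_a^{u_{0I},y}$, $\phy_a^{u_{1I},y}$, $\phy_a^{u_{I0},y}$, $\phy_a^{u_{I1},y}$ around the top face $a(u+y)$ equals the bottom face $\{ au + ay \}$. The key observation is that both the top and bottom faces, as well as all four side faces, are obtained by applying the $1$-distributor $\phy_a^1$ to the family of inputs $u \colon I^2 \to \cat{T}(X,A)$ together with the constant input $y$, in the sense of the ``continuous family'' version of the $1$-distributor introduced just before the lemma. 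Concretely, the map
\[
\phy_a^{u,y} \colon I^2 \x \{pt\} \x I \to \cat{T}(X,B)
\]
is a homotopy from $a(u \op y) = a(u+y)$ to $au \op ay = au + ay$, and I claim that the pasted $2$-track in Figure~\ref{fig:FlatBox} is precisely the $2$-track of this homotopy.

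The plan is therefore to identify the $3$-cube $\phy_a^{u,y} \colon I^2 \x I \to \cat{T}(X,B)$ (with the last coordinate being the ``distributor direction'' $t \in I$) and read off its faces. First I would fix coordinates: write a point of the domain as $(s_1, s_2, t)$ with $(s_1,s_2) \in I^2$ parametrizing the $2$-cube $u$ and $t \in I$ the distributor parameter. The face at $t=0$ is $a(u+y)$ (the top square of Figure~\ref{fig:FlatBox}), the face at $t=1$ is $au+ay$ (the bottom square). The four faces $s_i \in \{0,1\}$ are, by continuity of $\phy_a^1$ in its point inputs, exactly $\phy_a^{u_{0I},y}$, $\phy_a^{u_{1I},y}$ (for $s_1 = 0,1$) and $\phy_a^{u_{I0},y}$, $\phy_a^{u_{I1},y}$ (for $s_2 = 0,1$), where $u_{0I}$ denotes the restriction of $u$ to the edge $\{0\}\x I$, etc. Thus $\phy_a^{u,y}$ is literally a $3$-cube whose six faces are the six maps appearing in Figure~\ref{fig:FlatBox}. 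A $3$-cube in $\cat{T}(X,B)$ exhibits an equality of $2$-tracks between its $t=1$ face and the composite of its $t=0$ face with the four ``vertical'' faces (suitably oriented); this is exactly the asserted equality once one checks that the pasting scheme in the figure matches the boundary decomposition of the cube. So the bulk of the argument is bookkeeping: matching the drawn pasting diagram to the standard boundary formula for a $3$-cube, being careful about orientations of the arrows $u_{I0} \colon u_{00} \to u_{10}$ and similar.

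The main obstacle I anticipate is orientation and sign bookkeeping rather than any conceptual difficulty: one must verify that the arrows in Figure~\ref{fig:FlatBox} are oriented so that the pasted boundary of the composite $2$-track agrees on the nose (not just up to reparametrization of $I^2$) with $\obs{\phy^1}$-type data on $\del I^2$, and that the four vertical faces are attached with the correct handedness. A clean way to sidestep most of this is to invoke naturality: by the ``continuous family'' formalism the assignment $v \mapsto \phy_a^{v,y}$ is natural in the space-valued input $v$, so it suffices to check the statement for the universal $2$-cube $u = \id_{I^2} \colon I^2 \to I^2$, i.e.\ to treat $u_{00}, u_{10}, u_{01}, u_{11}$ and the edges as formal variables and verify the face identities there; the general case then follows by pulling back along $u \colon I^2 \to \cat{T}(X,A)$ (the map of spaces), using that $\ot$-composition and the distributor are compatible with such pullbacks exactly as in Lemma~\ref{lem:LeftLinCubes} and the universality arguments of Section~\ref{sec:GoodDistrib}. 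With that reduction, the proof is a short inspection of the six faces of the $3$-cube $\phy_a^{u,y}$ against the six regions of Figure~\ref{fig:FlatBox}.
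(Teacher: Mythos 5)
Your proof is correct and coincides with the paper's: the paper's proof is exactly the one-line observation that the $3$-cube $\phy_a^{u,y} \colon I^3 \to \cat{T}(X,B)$ furnishes the required homotopy rel $\del I^2$, and your more detailed reading of its six faces fills in the same bookkeeping the paper leaves implicit. (Incidentally, the paper has a small typo writing the target as $\cat{T}(X,A)$; your $\cat{T}(X,B)$ is the correct one.)
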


\begin{proof}
The $3$-cube $\phy_a^{u,y} \colon I^3 \to \cat{T}(X,A)$ provides a homotopy rel $\del I^2$ between the illustrated $2$-track and $\{ au + ay \}$. 
\end{proof}

\subsection{Two-dimensional analogue of the derivation}

Let $\phy^2$ be a good $2$-distributor for $\cat{EM}$. 
As in the section~\ref{sec:Kristensen1Dist}, start with an element of the Steenrod algebra $a \in \steen^m$, represented by a map $a \colon K_0 \to K_{m}$. The $2$-cube $\phy_a^{1,1,1} \colon I^2 \to \cat{EM}$ restricts to the boundary $\del I^2$ as illustrated in Figure~\ref{fig:2Distrib111}.

\begin{figure}[ht]
\begin{tikzpicture}[scale=1.5]
\draw [->-=0.5] (0,0) -- (0,2);
\draw [->-=0.5] (0,2) -- (2,2);
\draw [->-=0.5] (0,0) -- (2,0);
\draw [->-=0.5] (2,0) -- (2,2);
\draw [fill=\VertexColor] (0,0) circle (\VertexSize);
\node [below left] at (0,0) {$a = a(1 + 1 + 1)$};
\draw [fill=\VertexColor] (0,2) circle (\VertexSize);
\node [above left] at (0,2) {$a = a(1 + 1) + a 1$};
\draw [fill=\VertexColor] (2,0) circle (\VertexSize);
\node [below right] at (2,0) {$a1 + a(1 + 1) = a$.};
\draw [fill=\VertexColor] (2,2) circle (\VertexSize);
\node [above right] at (2,2) {$a1 + a1 + a1 = a$};
\node [below, text=\EdgeColor] at (1,0) {$\phy_a^{1,0} = a$}; %
\node [above, text=\EdgeColor] at (1,2) {$\phy_a^{1,1} + a$};
\node [left, text=\EdgeColor] at (0,1) {$a = \phy_a^{0,1}$};%
\node [right, text=\EdgeColor] at (2,1) {$a + \phy_a^{1,1}$};
\node [text=\FaceColor] at (1,1) {$\phy_a^{1,1,1}$};
\end{tikzpicture}
\caption{The $2$-cube $\phy_a^{1,1,1} \colon I^2 \to \cat{EM}$.}
\label{fig:2Distrib111}
\end{figure}

Note that the equations $\phy_a^{1,0} = \cst_a = \phy_a^{0,1}$ are instances of the wedge condition satisfied by the $1$-distributor $\phy^1$. Subtracting $a$ pointwise yields the $2$-cube $\phy_a^{1,1,1} - a \colon I^2 \to \cat{EM}$ illustrated in Figure~\ref{fig:Corrected2Distrib}. The top right part uses the canonical path homotopy $\ep \colon \ga^{\inv} \sq \ga \Ra \cst_{\ga(0)}$ to the constant path at $\ga(0)$.

\begin{figure}[ht]
\begin{tikzpicture}[scale=1.6]
\draw [->-=0.5] (0,0) -- (0,2);
\draw [->-=0.6] (0,2) -- (2,2);
\draw [->-=0.5] (0,0) -- (2,0);
\draw [->-=0.6] (2,0) -- (2,2);
\draw (0,2) -- (3,3) -- (2,0);
\draw [fill=\VertexColor] (0,0) circle (\VertexSize);
\node [below left] at (0,0) {$0$};
\draw [fill=\VertexColor] (0,2) circle (\VertexSize);
\node [above left] at (0,2) {$0$};
\draw [fill=\VertexColor] (2,0) circle (\VertexSize);
\node [below right] at (2,0) {$0$.};
\draw [fill=\VertexColor] (2,2) circle (\VertexSize);
\node [above right] at (2,2) {$0$};
\draw [fill=\VertexColor] (3,3) circle (\VertexSize);
\node [right] at (3,3) {$0$};
\node [below, text=\EdgeColor] at (1,0) {$0$}; %
\node [above, text=\EdgeColor] at (1.3,2) {$\phy_a^{1,1}$};
\node [left, text=\EdgeColor] at (0,1) {$0$};
\node [right, text=\EdgeColor] at (2,1.3) {$\phy_a^{1,1}$};
\node [above, text=\EdgeColor] at (1.5,2.5) {$0$};
\node [right, text=\EdgeColor] at (2.5,1.5) {$0$};
\node [text=\FaceColor] at (1,1) {$\phy_a^{1,1,1} - a$};
\DoubleArrow (2.35,2.35) -- (2.65,2.65);
\node [above left, text=\FaceColor] at (2.5,2.5) {$\ep$};
\end{tikzpicture}
\caption{The $2$-cube $\phy_a^{1,1,1} - a \colon I^2 \to \cat{EM}$ and a correction term.}
\label{fig:Corrected2Distrib}
\end{figure}

Taking the homotopy class rel $\del I^2$, this construction yields a well-defined class 
\[
\la(a) \in \pi_2 \cat{EM}(K_0,K_m) \cong \steen^{m-2},
\]
and thus a function $\la \colon \steen \to \steen$ of degree $-2$.

\begin{lem}
The function $\la \colon \steen \to \steen$ is linear, i.e., preserves addition.
\end{lem}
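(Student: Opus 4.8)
The plan is to specialize the left-linearity equation for linearity $2$-tracks (Lemma~\ref{lem:2TrackEqLeftLin}) to the inputs $x=y=z=1$, exploiting that mod $2$ we have $1+1=0$ and, by the wedge condition, $\phy_a^{1,0} = \phy_a^{0,1} = \cst_a$, and then push the resulting equality of $2$-tracks through the "subtract $a$ and cap off with $\ep$" construction defining $\la$. Recall first that $\la(a)$ is by construction the class in $\pi_2 \cat{EM}(K_0,K_m) \cong \steen^{m-2}$ obtained from the $2$-cube $\psi_a \dfn \phy_a^{1,1,1} - a$, whose boundary is $\cst_0$ along the bottom and left edges and $\phy_a^{1,1}$ along the top and right edges, by collapsing the boundary loop $\phy_a^{1,1} \sq (\phy_a^{1,1})^{\inv}$ via the canonical fold $\ep$. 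Since each mapping space $\cat{EM}(K_0,K_m)$ is a topological abelian group by Proposition~\ref{pr:WeaklyBilinear}\eqref{item:TopAbGp}, addition in $\pi_2$ agrees with the pointwise sum of based maps $S^2 \to \cat{EM}(K_0,K_m)$, and the capping construction is additive (the canonical $\ep$ commutes with the group operation, which only affects the target). Hence it suffices to produce one $2$-cube representing $\la(a)+\la(a')$ and one representing $\la(a+a')$ and to show they have the same capped class; equivalently, that $\psi_a + \psi_{a'}$ and $\psi_{a+a'}$ cap to the same element.

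Next I would apply Lemma~\ref{lem:2TrackEqLeftLin} with $x=y=z=1$. The four correction $2$-tracks appearing in Figure~\ref{fig:Sum2Tracks} specialize to $L_{a,a'}^{0,1}$, $L_{a,a'}^{1,0}$, $L_{a,a'}^{1,1} + (a+a')$ and $(a+a') + L_{a,a'}^{1,1}$. The first two are \emph{trivial}: by the defining property in Notation~\ref{nota:LeftLinWitness}, $L_{a,a'}^{0,1} = L_{a,a'}^{p_0,p_1} \ot i_1 = i_1^* L_{a,a'}^{p_0,p_1} = \cst_{a+a'}$ and likewise $L_{a,a'}^{1,0} = i_0^* L_{a,a'}^{p_0,p_1} = \cst_{a+a'}$. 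Thus the composite $2$-cube of Figure~\ref{fig:Sum2Tracks} reduces, up to homotopy rel boundary, to $\phy_{a+a'}^{1,1,1}$ with the $2$-track $L_{a,a'}^{1,1}$ (shifted by $a+a'$) pasted along its top edge and along its right edge, and Lemma~\ref{lem:2TrackEqLeftLin} identifies it with $\{\phy_a^{1,1,1}\} + \{\phy_{a'}^{1,1,1}\} = \{\phy_a^{1,1,1} + \phy_{a'}^{1,1,1}\}$. Subtracting the constant $\cst_{a+a'}$ pointwise (an operation that distributes over pasting of $2$-cubes) makes the shifts disappear: the $2$-cube $u$ obtained from $\psi_{a+a'}$ by pasting $L_{a,a'}^{1,1}$ on its top and right edges is homotopic rel boundary to $(\phy_a^{1,1,1} + \phy_{a'}^{1,1,1}) - (a+a') = \psi_a + \psi_{a'}$. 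Capping therefore gives $[u] = \la(a) + \la(a')$ in $\pi_2 \cat{EM}(K_0,K_m)$.

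The remaining point, which I expect to be the main obstacle, is to show $[u] = \la(a+a')$, i.e., that capping $u$ returns the same class as capping $\psi_{a+a'}$. Here both the top and the right edge of $\psi_{a+a'}$ are the \emph{same} loop $\ga \dfn \phy_{a+a'}^{1,1}$, and the \emph{same} globular $2$-track $L \dfn L_{a,a'}^{1,1} \colon \ga \Ra \de$, with $\de \dfn \phy_a^{1,1} + \phy_{a'}^{1,1}$, is pasted on both; in the boundary traversal of $u$ the top edge is traversed backwards, so the fold sees $L$ on one branch and its reverse on the other. The claim then follows from the \emph{naturality} of the canonical fold $\ep$ in its path argument: deforming $\ga$ to $\de$ via $L$ on both branches of the fold and then capping with $\ep_\de$ reproduces exactly $\ep_\ga$. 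This naturality holds because $\ep$ is given by a reparametrization formula that does not depend on the path, so the two $L$-corrections are absorbed by the fold rather than contributing to $\pi_2$. Combining the three steps yields $\la(a+a') = [u] = \la(a) + \la(a')$. The triviality of the corrections $L^{0,1}_{a,a'}$, $L^{1,0}_{a,a'}$, the additivity of capping, and the compatibility of "subtract $\cst_{a+a'}$" with pasting are all routine; the fold-naturality bookkeeping is the substantive part.
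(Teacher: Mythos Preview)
Your proposal is correct and follows essentially the same approach as the paper: specialize Lemma~\ref{lem:2TrackEqLeftLin} to $x=y=z=1$ and use that $L_{a,a'}^{0,1}$ and $L_{a,a'}^{1,0}$ are the constant $2$-track at $a+a'$. In fact you go further than the paper's one-line proof by explicitly treating the residual $L_{a,a'}^{1,1}$ corrections on the top and right edges and explaining why they are absorbed by the naturality of the fold $\ep$; the paper leaves this bookkeeping implicit.
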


\begin{proof}
Let $a,a' \in \steen^m$. Applying Lemma~\ref{lem:2TrackEqLeftLin} to the $2$-track $\{ \phy_{a+a'}^{1,1,1} \}$ and using the fact that $\{ L_{a,a'}^{0,1} \}$ and $\{ L_{a,a'}^{1,0} \}$ are both the constant $2$-track at $a+a'$, we obtain $\la(a+a') = \la(a) + \la(a')$.
\end{proof}

\begin{prop} \label{pr:LaDerivation}
The function $\la \colon \steen \to \steen$ is a derivation.
\end{prop}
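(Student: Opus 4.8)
The plan is to mimic the structure of the proof that $\ka$ is a derivation (Lemma~\ref{lem:Derivation}), but one dimension higher: combine the precomposition equation (Lemma~\ref{lem:2TrackEqPrecompo}) with the product rule (Lemma~\ref{lem:2TrackEqProduct}), specialized to the inputs $x=y=z=1$. So we must show $\la(ba) = \la(b) a + b \la(a)$ for $a \in \steen^m$, $b \in \steen^n$, where $\la(a) \in \pi_2 \cat{EM}(K_0, K_m)$ and $\la$ is built from the $2$-track $\{\phy_a^{1,1,1}\}$ with the correction term $\ep$ as in Figure~\ref{fig:Corrected2Distrib}.

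First I would specialize the big diagram in Figure~\ref{fig:Product2Track} to the case $a \colon K_0 \to K_m$, $b \colon K_m \to K_{m+n}$, and $x=y=z=1_{K_0}$. Because $K_\bullet$ are $\F_2$-vector space objects, every vertex in that figure becomes $0$ (or more precisely $ba$ at the source, $0$ after the identifications $a+a=0$, etc.), so the whole $2$-track becomes a self-track of a point, and taking homotopy classes rel $\del I^2$ turns the equation ``illustrated $2$-track $= \{\phy_{ba}^{1,1,1}\}$'' into an equation in $\pi_2 \cat{EM}(K_0, K_{m+n})$. I would then read off the four interior $2$-faces: $b\phy_a^{1,1,1}$ contributes $b \la(a)$ (after noting that postcomposition by $b$ is linear and the correction term $\ep$ transported along $b$ matches the one defining $\la$); $\phy_b^{a1,a1,a1} = \phy_b^{1,1,1}$ (since $a1 = a = 1_{K_m}$ up to the relevant identification, or rather $a$ viewed as an element of degree... here one must be a little careful) contributes $b \cdot$ nothing — actually this face gives the ``$\phy_b^{1,1,1}$ with inputs $a$'' term, which by precomposition (Lemma~\ref{lem:2TrackEqPrecompo}, or rather its analogue for the full $2$-cube) equals $\{\phy_b^{1,1,1}\} \cdot a$, contributing $\la(b) a$; and the two ``mixed'' faces $\phy_b^{\phy_a^{1,1}, a1}$ and $\phy_b^{a1, \phy_a^{1,1}}$ together with the four $P_{b,a}$-faces on the outer annulus should contribute only terms that vanish in $\pi_2$ or cancel.

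The key step — and the main obstacle — is controlling the two mixed faces $\phy_b^{\phy_a^{1,1},a}$ and $\phy_b^{a,\phy_a^{1,1}}$ and the four outer $P_{b,a}$-squares, and showing their total contribution in $\pi_2$ is zero (so that only $b\la(a) + \la(b)a$ survives). Here I would use Lemma~\ref{lem:FlatBox}: a face of the form $\phy_b^{u, y}$ where $u$ is a $1$-cube (a path) and $y$ a point is, up to the homotopy provided by the $3$-cube $\phy_b^{u,y}$, just the ``flat'' $2$-track $\{bu + by\}$; since $by = b\cdot a$ is a constant here and $\{bu\}$ with $u = \phy_a^{1,1}$ reassembles into $b\{\phy_a^{1,1}\} = b\ka(a)$, which is $1$-dimensional data that gets absorbed into the boundary identification rather than contributing a $2$-dimensional class. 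Similarly the $P_{b,a}^{x_i, x_j}$ squares, evaluated at inputs $0$ or $1$, reduce via Notation~\ref{nota:ProductWitness} to constant $2$-tracks or to $b$-translates of $1$-dimensional linearity data. Making this bookkeeping precise is the real content: one should argue, as in the proof of Lemma~\ref{lem:2TrackEqProduct}, by universality (reduce to $x_i = p_i$) and then invoke the uniqueness clause of Proposition~\ref{pr:ExistNDistrib} to identify the composite $2$-track along the boundary annulus of Figure~\ref{fig:Product2Track} with the expected sum, so that the equation of $2$-cubes descends to the claimed equation $\la(ba) = \la(b)a + b\la(a)$ of classes in $\steen$. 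Combined with the already-proven linearity of $\la$, this establishes that $\la$ is a derivation.

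A secondary point I would check carefully is the interaction of the correction term $\ep$ (from Figure~\ref{fig:Corrected2Distrib}) with postcomposition and precomposition: $\ep$ is the canonical null-homotopy of $\ga^{\inv}\sq\ga$, it is natural, so $b\cdot\ep = \ep$ for the path $b\phy_a^{1,1}$ and $\ep \cdot a = \ep$ for $\phy_b^{1,1}$, which is what makes $b\la(a)$ and $\la(b)a$ come out on the nose rather than up to a further correction. Granting this naturality, the proof is a one-dimension-up replay of Lemma~\ref{lem:Derivation}.
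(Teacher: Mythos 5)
Your proposal follows the same route as the paper's proof: specialize Lemma~\ref{lem:2TrackEqProduct} to $x=y=z=1$, exploit that $P_{b,a}^{1,0}$ and $P_{b,a}^{0,1}$ are constant $2$-tracks, invoke precomposition (Lemma~\ref{lem:2TrackEqPrecompo}), use Lemma~\ref{lem:FlatBox}, and apply the $\ep$-correction to land in $\pi_2$, before reading off $\la(ba) = \la(b)a + b\la(a)$ from the resulting picture. The key ingredients all match, so this is essentially the argument of the paper.

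One place where your account is slightly off is the exact role of Lemma~\ref{lem:FlatBox}. You propose to apply it to the mixed faces $\phy_b^{\phy_a^{1,1},a}$ and $\phy_b^{a,\phy_a^{1,1}}$ directly, describing them as "$\phy_b^{u,y}$ with $u$ a $1$-cube, filled by the $3$-cube $\phy_b^{u,y}$". But with $u$ a path and $y$ a point, $\phy_b^{u,y}$ is only a $2$-cube --- it \emph{is} the mixed face, not a filler for it --- so Lemma~\ref{lem:FlatBox} (which is stated for $u$ a $2$-cube, with the $3$-cube $\phy_a^{u,y}$ providing the homotopy) does not apply there as stated. The paper instead applies Lemma~\ref{lem:FlatBox} to the \emph{bottom-left} $2$-face $b\phy_a^{1,1,1} = b(u_a + a)$, where $u_a = \phy_a^{1,1,1}-a$ is genuinely a $2$-cube and $y=a$ a point; the mixed faces then appear as the fillers in the Lemma~\ref{lem:FlatBox} picture (since the edges $\phy_a^{1,1}$ of $u_a$ are exactly what feeds into $\phy_b^{\phy_a^{1,1},a}$). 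After subtracting $ba$ and inserting $\ep$ and $\eta$ corrections (Figure~\ref{fig:LambdaProduct}), the mixed faces contribute only boundary data, and the claim follows. So your overall plan is right and matches the paper's, but the precise application point of Lemma~\ref{lem:FlatBox} needs this adjustment for the argument to actually close.

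Your observation about the naturality of $\ep$ under pre- and post-composition (so that $b\la(a)$ and $\la(b)a$ come out on the nose) is indeed the point being used, and you correctly anticipate that universality and the uniqueness clause of Proposition~\ref{pr:ExistNDistrib} underlie the diagram manipulations, just as in Lemmas~\ref{lem:2TrackEqLeftLin} and \ref{lem:2TrackEqProduct}.
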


\begin{proof}
Let $a \colon A \to B$ and $b \colon B \to C$ be maps in $\cat{EM}$. Applying Lemma~\ref{lem:2TrackEqProduct} to the case $x=y=z=1_A$, the $2$-track illustrated in Figure~\ref{fig:2TrackDerivation} is equal to $\{ \phy_{ba}^{1,1,1} \}$.

\begin{figure}[ht]
\begin{tikzpicture}[scale=1.6]
\draw [fill=\VertexColor] (0,0) circle (\VertexSize);
\draw [fill=\VertexColor] (0,2) circle (\VertexSize);
\draw [fill=\VertexColor] (0,4) circle (\VertexSize);
\draw [fill=\VertexColor] (2,0) circle (\VertexSize);
\draw [fill=\VertexColor] (2,2) circle (\VertexSize);
\draw [fill=\VertexColor] (2,4) circle (\VertexSize);
\draw [fill=\VertexColor] (4,0) circle (\VertexSize);
\draw [fill=\VertexColor] (4,2) circle (\VertexSize);
\draw [fill=\VertexColor] (4,4) circle (\VertexSize);
\draw [->-=0.5] (0,0) -- (2,0);
\draw [->-=0.5] (2,0) -- (4,0);
\draw [->-=0.5] (0,2) -- (2,2);
\draw [->-=0.5] (2,2) -- (4,2);
\draw [->-=0.5] (0,4) -- (2,4);
\draw [->-=0.5] (2,4) -- (4,4);
\draw [->-=0.54] (0,0) -- (0,2);
\draw [->-=0.54] (0,2) -- (0,4);
\draw [->-=0.54] (2,0) -- (2,2);
\draw [->-=0.54] (2,2) -- (2,4);
\draw [->-=0.54] (4,0) -- (4,2);
\draw [->-=0.54] (4,2) -- (4,4);
\draw [->-=0.5] (0,4) arc (135:45:{2*sqrt(2)});
\draw [->-=0.42] (4,0) arc (-45:45:{2*sqrt(2)});
\DoubleArrow (0.6,2.8) -- (0.6,3.2);
\DoubleArrow (2.8,1.3) -- (3.2,1.3);
\DoubleArrow (1.5,4.3) -- (1.5,4.7);
\DoubleArrow (4.3,2.1) -- (4.7,2.1);
\node [above, align=center] at (2,4) {$ba$};
\node [below right, align=left] at (4,2) {$ba$};
\node [above right, align=left] at (4,4) {$ba$};
\node [left, text=\EdgeColor] at (0,1) {$ba$};
\node [left, text=\EdgeColor] at (0,3) {$ba$};
\node [below, text=\EdgeColor] at (2,1) {$b (a + \phy_a^{1,1})$};
\node [below, text=\EdgeColor] at (2,3) {$ba$};
\node [below, text=\EdgeColor] at (4,1) {$ba + b \phy_a^{1,1}$};
\node [below, text=\EdgeColor] at (4,3) {$ba + \phy_b^{1,1}a$};
\node [below, text=\EdgeColor] at (1,0) {$ba$};
\node [below, text=\EdgeColor] at (3,0) {$ba$};
\node [below, text=\EdgeColor] at (1,2) {$b (\phy_{a}^{1,1} + a)$};
\node [below, text=\EdgeColor] at (3,2) {$ba$};
\node [below, text=\EdgeColor] at (1,4) {$b \phy_{a}^{1,1} + ba$};
\node [below, text=\EdgeColor] at (3,4) {$\phy_{b}^{1,1}a + ba$};
\node [text=\EdgeColor] at (2,5) {$\phy_{ba}^{1,1} + ba$};
\node [right, text=\EdgeColor] at (5,1.5) {$ba + \phy_{ba}^{1,1}$};
\node [text=\FaceColor] at (1,1) {$b \phy_{a}^{1,1,1}$};
\node [text=\FaceColor] at (1,3) {$\phy_{b}^{\phy_{a}^{1,1}, a}$};
\node [text=\FaceColor] at (3,1) {$\phy_{b}^{a, \phy_{a}^{1,1}}$};
\node [text=\FaceColor] at (3,3) {$\phy_{b}^{1,1,1}a$};
\node [right, text=\FaceColor] at (1.5, 4.5) {$P_{b,a}^{1,1} + ba$};
\node [above, text=\FaceColor] at (4.5, 2.1) {$ba + P_{b,a}^{1,1}$};
\end{tikzpicture}
\caption{The $2$-track $\{ \phy_{ba}^{1,1,1} \}$.}
\label{fig:2TrackDerivation}
\end{figure}

We used the fact that both $P_{b,a}^{1,0}$ and $P_{b,a}^{0,1}$ are the constant $2$-track at $ba \in \cat{EM}(A,C)$. We also used the precomposition equation from Lemma~\ref{lem:2TrackEqPrecompo}. Denote the $2$-track $u_a \dfn \{ \phy_a^{1,1,1} - a \}$. Applying Lemma~\ref{lem:FlatBox} to $b \phy_a^{1,1,1} = b(u_a + a)$, subtracting $ba$ pointwise everywhere, and applying the correction $2$-track $\ep$ in the upper right part of the diagram, we deduce that the $2$-track $\la(ba)$ is given as in Figure~\ref{fig:LambdaProduct}.

\begin{figure}[ht]
\begin{tikzpicture}[scale=1.5]
\draw [fill=\VertexColor] (0,0) circle (\VertexSize);
\draw [fill=\VertexColor] (0,4) circle (\VertexSize);
\draw [fill=\VertexColor] (2,2) circle (\VertexSize);
\draw [fill=\VertexColor] (2,4) circle (\VertexSize);
\draw [fill=\VertexColor] (4,0) circle (\VertexSize);
\draw [fill=\VertexColor] (4,2) circle (\VertexSize);
\draw [fill=\VertexColor] (4,4) circle (\VertexSize);
\draw [fill=\VertexColor] (5,5) circle (\VertexSize);
\draw [->-=0.5] (0,0) -- (4,0);
\draw [->-=0.5] (2,2) -- (4,2);
\draw [->-=0.5] (0,4) -- (2,4);
\draw [->-=0.5] (2,4) -- (4,4);
\draw [->-=0.50] (0,0) -- (0,4);
\draw [->-=0.50] (2,2) -- (2,4);
\draw [->-=0.50] (4,0) -- (4,2);
\draw [->-=0.50] (4,2) -- (4,4);
\draw [->-=0.5] (0,4) -- (2,2);
\draw [->-=0.5] (4,0) -- (2,2);
\draw [->-=0.5] (0,4) -- (5,5);
\draw [->-=0.5] (4,0) -- (5,5);
\DoubleArrow (3.65,1.65) -- (3.35,1.35);
\DoubleArrow (1.65,3.65) -- (1.35,3.35);
\DoubleArrow (4.35,4.35) -- (4.65,4.65);
\node [above, align=center] at (2,4) {$0$};
\node [below right, align=left] at (4,2) {$0$};
\node [above right, align=left] at (4,4) {$0$};
\node [left, text=\EdgeColor] at (0,2) {$0$};
\node [below left, text=\EdgeColor] at (3,1) {$b \phy_a^{1,1}$};
\node [left, text=\EdgeColor] at (2,3) {$0$};
\node [left, text=\EdgeColor] at (4,1) {$b \phy_a^{1,1}$};
\node [left, text=\EdgeColor] at (4,3) {$\phy_b^{1,1} a$};
\node [below, text=\EdgeColor] at (2,0) {$0$};
\node [below left, text=\EdgeColor] at (1,3) {$b \phy_{a}^{1,1}$};
\node [below, text=\EdgeColor] at (3,2) {$0$};
\node [below, text=\EdgeColor] at (1,4) {$b \phy_{a}^{1,1}$};
\node [below, text=\EdgeColor] at (3,4) {$\phy_{b}^{1,1}a$};
\node [above, text=\EdgeColor] at (2.5,4.5) {$0$};
\node [right, text=\EdgeColor] at (4.5,2.5) {$0$};
\node [text=\FaceColor] at (1,1) {$b u_a$};
\node [text=\FaceColor] at (3,3) {$u_b a$};
\node [above left, text=\FaceColor] at (1.5, 3.5) {$\eta$};
\node [above left, text=\FaceColor] at (3.5, 1.5) {$\eta$};
\node [above left, text=\FaceColor] at (4.5, 4.5) {$\ep$};
\end{tikzpicture}
\caption{The $2$-track $\la(ba)$.}
\label{fig:LambdaProduct}
\end{figure}

There, we denote by $\eta \colon \cst_{\ga(1)} \sq \ga \Ra \ga$ and $\eta \colon \ga \sq \cst_{\ga(0)} \Ra \ga$ the canonical path homotopies, whose $2$-tracks are well-defined. 
Straightforward manipulations of $2$-tracks yield the claimed equality $\la(ba) = \la(b) a + b \la(a)$.
\end{proof}

When working with mod $2$ coefficients, a composite of derivations is still a derivation. 
We leave the following question to the reader.

\begin{ques}
Is the derivation $\la \colon \steen \to \steen$ given by the composite $\la = \ka^2$? 
\end{ques}

\section{Homotopy invariance} \label{sec:HomotInvar}

In this section, we study to what extent an $n$-distributor is a homotopy invariant structure, and prove some homotopy transfer results. 
Since our construction of distributors relied on model dependent features (fibrant, cofibrant monoid objects in a simplicial model category), homotopy invariance provides 
some flexibility in the choice of model.
Unlike in Sections~\ref{sec:GoodDistrib} and \ref{sec:Kristensen}, goodness of distributors will play no role in this section. Also, finite products in $\cat{T}$, which were crucial to the construction of distributors, are not used here. Hence, instead of left linear mapping theories, we work with left linear $\Topp_*$-categories.

\subsection{Pulling back distributors}

\begin{lem} \label{lem:AddCubes}
Let $\cat{S}$ and $\cat{T}$ be $\Topp$-categories in which all mapping spaces are topological  %
monoids. Let $F \colon \cat{S} \to \cat{T}$ be a $\Topp$-functor such that for all objects $A,B$ of $\cat{S}$, the induced map
\[
F \colon \cat{S}(A,B) \to \cat{T}(FA,FB)
\]
is a %
monoid homomorphism. Then $F$ preserves external addition of cubes, i.e., given $a \colon I^m \to \cat{S}(A,B)$ and $b \colon I^n \to \cat{S}(A,B)$, the equality
\[
F(a \op b) = F(a) \op F(b)
\]
holds. Both sides are $(m+n)$-cubes in $\cat{T}(FA,FB)$.
\end{lem}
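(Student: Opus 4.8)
The plan is to unwind the definition of external addition of cubes from Definition~\ref{def:ProdCubes} and track how the $\Topp$-functor $F$ interacts with each piece. Recall that $a \op b \colon I^{m+n} = I^m \x I^n \to \cat{S}(A,B)$ is by definition the composite
\[
I^m \x I^n \ral{a \x b} \cat{S}(A,B) \x \cat{S}(A,B) \ral{+_{\cat{S}}} \cat{S}(A,B),
\]
where $+_{\cat{S}}$ is the addition on the topological monoid $\cat{S}(A,B)$. Applying $F$ postcomposes this with $F \colon \cat{S}(A,B) \to \cat{T}(FA,FB)$, so $F(a \op b)$ is the composite $F \circ (+_{\cat{S}}) \circ (a \x b)$.

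The key step is to use the hypothesis that $F \colon \cat{S}(A,B) \to \cat{T}(FA,FB)$ is a monoid homomorphism, i.e., $F \circ (+_{\cat{S}}) = (+_{\cat{T}}) \circ (F \x F)$ as maps $\cat{S}(A,B) \x \cat{S}(A,B) \to \cat{T}(FA,FB)$ (here I am using that a monoid homomorphism commutes with the addition maps, which is just a restatement of $F(x + x') = F(x) + F(x')$ together with continuity). Substituting this into the previous paragraph, $F(a \op b)$ equals the composite
\[
I^m \x I^n \ral{a \x b} \cat{S}(A,B) \x \cat{S}(A,B) \ral{F \x F} \cat{T}(FA,FB) \x \cat{T}(FA,FB) \ral{+_{\cat{T}}} \cat{T}(FA,FB).
\]
Now $(F \x F) \circ (a \x b) = (F \circ a) \x (F \circ b) = F(a) \x F(b)$, so the composite above is exactly $(+_{\cat{T}}) \circ (F(a) \x F(b))$, which by Definition~\ref{def:ProdCubes} is precisely $F(a) \op F(b)$. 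Both sides are visibly $(m+n)$-cubes in $\cat{T}(FA,FB)$, since $F(a) \colon I^m \to \cat{T}(FA,FB)$ and $F(b) \colon I^n \to \cat{T}(FA,FB)$.

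This proof is essentially a diagram chase with no real obstacle; the only point requiring any care is to be explicit that ``monoid homomorphism'' for the $\Topp$-enriched hom-maps means exactly the commuting square relating $+_{\cat{S}}$ and $+_{\cat{T}}$, and that $\Topp$-functoriality gives $F \circ (a \x b) = (F\circ a)\x(F\circ b)$ at the level of maps of spaces. I would write the argument as a short chain of equalities of maps $I^{m+n} \to \cat{T}(FA,FB)$, citing Definition~\ref{def:ProdCubes} for the first and last equality and the monoid homomorphism hypothesis for the middle one.
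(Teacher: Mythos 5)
Your proof is correct, and since the paper states Lemma~\ref{lem:AddCubes} without proof (treating it as immediate from the definitions), your unwinding is exactly the argument being implicitly invoked: $F(a \op b) = F \circ (+_{\cat{S}}) \circ (a \x b) = (+_{\cat{T}}) \circ (F \x F) \circ (a \x b) = (+_{\cat{T}}) \circ (Fa \x Fb) = F(a) \op F(b)$. One small slip in your wrap-up: the equality $(F \x F) \circ (a \x b) = (F \circ a) \x (F \circ b)$ is functoriality of the Cartesian product in $\Topp$ (interchange), not $\Topp$-functoriality of $F$, and the expression ``$F \circ (a \x b)$'' as you wrote it doesn't type-check since $F$ has domain $\cat{S}(A,B)$ rather than $\cat{S}(A,B) \x \cat{S}(A,B)$; your displayed chain of composites has it right, so this is only a phrasing issue in the closing sentence.
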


\begin{lem} \label{lem:PreserveObstruc}
Let $F \colon \cat{S} \to \cat{T}$ be a morphism of left linear $\Topp_*$-categories, and $\phy^{n-1}$ an $(n-1)$-distributor for $\cat{S}$. Then every proper subcube $C_{\si} \subset I^n$, we have
\[
F \left( \phy^{n-1}[\si] \right) = \left( F \phy^{n-1} \right) [\si]
\]
as maps $C_{\si} \to \cat{T}$. 
Consequently the obstruction map satisfies
\[
F \left( \obs{\phy^{n-1}} \right) = \obs{F \phy^{n-1}}
\]
as maps $\del I^n \to \cat{T}$.
\end{lem}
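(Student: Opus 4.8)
The plan is to reduce the statement about the obstruction map to the face-by-face statement about $\phy^{n-1}[\si]$, since by Definition~\ref{def:nObstruc} the obstruction map is defined precisely by its restriction to each proper face: $\obs{\phy^{n-1}} \vert_{C_{\si}} = \phy^{n-1}[\si]$. So once we know $F(\phy^{n-1}[\si]) = (F\phy^{n-1})[\si]$ for every proper subcube $C_{\si}$, the equality $F(\obs{\phy^{n-1}}) = \obs{F\phy^{n-1}}$ follows immediately by gluing: both sides restrict to $(F\phy^{n-1})[\si]$ on $C_{\si}$, and the face formula guarantees these restrictions are compatible on intersections $C_{\si}\cap C_{\si'}$, so the maps $\del I^n \to \cat{T}$ agree.

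For the face-by-face statement, the plan is to unwind the face formula from Definition~\ref{def:FaceFormula} and push $F$ through each of its two steps. In Step~1, $\phy^{n-1}[\si]$ is an external sum $\phy^{n-1}[\si\vert_{J_0}] \op \cdots \op \phy^{n-1}[\si\vert_{J_t}]$ over the partition $\{0,\ldots,n\} = J_0 \sqcup \cdots \sqcup J_t$; since $F$ is a morphism of left linear $\Topp_*$-categories, in particular its action on mapping spaces is a monoid homomorphism, Lemma~\ref{lem:AddCubes} applies and $F$ commutes with external addition of cubes, so $F(\phy^{n-1}[\si]) = F(\phy^{n-1}[\si\vert_{J_0}]) \op \cdots \op F(\phy^{n-1}[\si\vert_{J_t}])$. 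Thus it suffices to treat a single interval block $J$. In Step~2, $\phy^{n-1}[\si\vert_J] = \phy_a^{x_{K_0}, \ldots, x_{K_d}}$ for the sub-partition $J = K_0 \sqcup \cdots \sqcup K_d$, where $x_K = \sum_{k\in K} x_k$. Here we use that $F$ is a $\Topp_*$-functor applied to the distributor family $\phy^{n-1}$ of $\cat{S}$: by definition of $F\phy^{n-1}$, $F(\phy_a^{x_{K_0},\ldots,x_{K_d}}) = (F\phy^{n-1})_{Fa}^{F x_{K_0}, \ldots, F x_{K_d}}$, and since $F$ preserves addition strictly, $F(x_K) = \sum_{k\in K} F(x_k) = (Fx)_K$. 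Comparing with the face formula applied to $F\phy^{n-1}$ and inputs $Fa, Fx_0, \ldots, Fx_n$ (note the partitions $J_k$, $K_l$ depend only on the code $\si$, not on the inputs), we get exactly $(F\phi^{n-1})[\si\vert_J]_{Fa}^{Fx_0,\ldots,Fx_n}$, which is what we want.

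Reassembling Steps 1 and 2 gives $F(\phy^{n-1}[\si]) = (F\phy^{n-1})[\si]$ as maps $C_{\si} \to \cat{T}$, and then the obstruction-map statement follows by the gluing argument above. I do not expect any genuine obstacle here: the only mildly delicate point is bookkeeping — making sure that the combinatorial data (the intervals $J_k$ and $K_l$ extracted from the code $\si$) is genuinely input-independent so that ``applying $F$ and then the face formula'' agrees with ``applying the face formula and then $F$,'' and that the abuse-of-notation conventions for $xy$ and $x+y$ when one factor is a $0$-cube (Definition~\ref{def:ProdCubes}) are respected by $F$. Both are routine given that $F$ is a $\Topp_*$-functor that is additive on hom-spaces. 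One should also note that we are implicitly using that $F\phy^{n-1}$ is itself an $(n-1)$-distributor for $\cat{T}$ in order for the notation $(F\phy^{n-1})[\si]$ to make sense; but this is not needed for the proof — the face formula $\phy^{m}[\si]$ only requires a family of cubes indexed as in Definition~\ref{def:nDistrib}, which $F\phy^{n-1}$ certainly is.
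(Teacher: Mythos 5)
Your proposal is correct and follows essentially the same route as the paper's own proof: decompose $\phy^{n-1}[\si]$ via Step~1 of the face formula, commute $F$ past the external sum using Lemma~\ref{lem:AddCubes}, and handle Step~2 using that $F$ preserves addition (so $F(x_K) = (Fx)_K$); the paper compresses all of this into one four-line chain of equalities. Your extra remarks — the gluing step for the obstruction map, the observation that the partitions depend only on the code $\si$, and that $F\phy^{n-1}$ need not itself be a distributor for the face formula to parse — are all correct and make explicit points the paper leaves implicit.
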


\begin{proof}
Consider the partition $\{ 0, 1, \ldots, n \} = J_0 \sqcup J_1 \sqcup \ldots \sqcup J_t$ as in Definition~\ref{def:FaceFormula}. Then we have
\begin{flalign*}
&& F \left( \phy^{n-1}[\si] \right) &= F \left( \phy^{n-1}[\si \vert_{J_0}] \op \cdots \op \phy^{n-1}[\si \vert_{J_t}] \right) && \\
&& &= F \left( \phy^{n-1}[\si \vert_{J_0}] \right) \op \cdots \op F \left( \phy^{n-1}[\si \vert_{J_t}] \right) && \text{by Lemma~\ref{lem:AddCubes}} \\
&& &= \left( F \phy^{n-1} \right) [\si \vert_{J_0}] \op \cdots \op \left( F \phy^{n-1} \right) [\si \vert_{J_t}] && \text{using } F(x+x') = Fx + Fx' \\
&& &= \left( F \phy^{n-1} \right) [\si]. && \qedhere
\end{flalign*}
\end{proof}

\begin{lem} \label{lem:LiftWeakEq}
Let $i \colon A \to X$ be a mixed cofibration and $q \colon Y \ral{\sim} Z$ a weak equivalence. Given a commutative square as in the diagram
\[
\xymatrix{
A \ar[d]_{i} \ar[r]^-{f} & Y \ar[d]^{q} \\
X \ar@{-->}[ur]^-{\tild{g}} \ar@{}@<2.0ex>[r]^(0.75){\Uparrow} \ar[r]^{g} & Z, \\
}
\]
there exists a map $\tild{g} \colon X \to Y$ making the upper triangle commute strictly, and the lower triangle commute up to homotopy rel $A$.
\end{lem}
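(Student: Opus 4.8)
The plan is to reduce to a lifting problem against a \emph{trivial fibration}, since $q$ is only assumed to be a weak equivalence and not a fibration. First I would replace $q \colon Y \ral{\sim} Z$ by a fibration via the mapping path space $N_q$, the pullback of $q$ along the source-evaluation $Z^I \to Z$, whose points are the pairs $(y,\ga) \in Y \x Z^I$ with $\ga(0) = q(y)$. It carries the inclusion $s \colon Y \to N_q$ by constant paths, the projection $r \colon N_q \to Y$, and the target-evaluation $p \colon N_q \to Z$, and these satisfy $rs = \id_Y$ and $ps = q$. The map $p$ is a Serre fibration, being obtained by pulling back the endpoint fibration $Z^I \to Z \x Z$ along $q \x \id_Z$ and composing with a projection, and the formula $D\bigl((y,\ga),t\bigr) := \bigl(y,\, u \mapsto \ga((1-t)u)\bigr)$ defines a strong deformation retraction $D \colon N_q \x I \to N_q$ of $N_q$ onto $s(Y)$, with $D_0 = \id_{N_q}$, $D_1 = sr$, and $D_t \circ s = s$ for every $t \in I$. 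In particular $s$ is a homotopy equivalence, hence a weak equivalence, so the $2$-out-of-$3$ property applied to $q = ps$ forces $p$ to be a weak equivalence; being also a Serre fibration, $p$ is a trivial fibration in the mixed model structure.

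Next I would solve the lifting problem in the square whose top edge is $sf \colon A \to N_q$, bottom edge is $g \colon X \to Z$, left edge is $i$, and right edge is $p$; it commutes because $p \circ s f = q f = g i$. Since $i$ is a mixed cofibration and $p$ a trivial fibration, there is a strict lift $h \colon X \to N_q$ with $h i = s f$ and $p h = g$. I then set $\tild{g} := r h \colon X \to Y$. The upper triangle commutes strictly: $\tild{g} i = r h i = r s f = f$. For the lower triangle, consider the homotopy $D \circ (h \x \id_I) \colon X \x I \to N_q$ from $h$ to $s \tild{g} = s r h$; it is constant on $A$, since for $a \in A$ one has $h(i(a)) = s f(a) \in s(Y)$ and $D$ fixes $s(Y)$. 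Postcomposing with $p$ yields a homotopy $p \circ D \circ (h \x \id_I) \colon X \x I \to Z$ from $g = p h$ to $q \tild{g} = p s \tild{g}$ which is constant equal to $g i$ on $A$; this is the desired homotopy rel $A$ between $g$ and $q \tild{g}$.

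The one delicate point is exactly that $q$ need not be a fibration, so the lifting axiom cannot be applied to $q$ directly; the mapping path space is what turns the problem into a lift against the trivial fibration $p$. It is also essential that $r$ be a \emph{strong} deformation retraction, constant on $s(Y)$, rather than merely a homotopy inverse of $s$, since that property is precisely what makes the comparison homotopy for the lower triangle stationary on $A$.
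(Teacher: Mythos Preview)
Your argument is essentially the same as the paper's: factor $q$ through the mapping path space, lift against the resulting trivial fibration, and retract back to $Y$, using that the deformation retraction is strong to get the homotopy rel $A$. One terminological slip: you call $p$ a \emph{Serre} fibration and then invoke the lifting property of mixed cofibrations, but mixed cofibrations are only guaranteed to lift against \emph{Hurewicz} fibrations that are weak equivalences; fortunately your own argument (pullback of the Hurewicz fibration $Z^I \to Z \times Z$, composed with a projection) actually shows $p$ is a Hurewicz fibration, so the lift goes through as written.
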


\begin{proof}
Recall that the mapping path space $P(q) = Y \x_Z Z^I$ provides a (functorial) factorization of $q \colon Y \to Z$ into a strong deformation retract $c \colon Y \to P(q)$ followed by a Hurewicz fibration $p \colon P(q) \to Z$. Denote the retraction map by $r \colon P(f) \to Y$. In our case, $p \colon P(q) \to Z$ is also a weak equivalence, since $q$ was. In the diagram
\[
\xymatrix{
A \ar[dd]_{i} \ar[dr]^-{cf} \ar[rr]^-{f} & & Y \ar[dl]_-{c} \ar[dd]^{q}_{\sim} \\
& P(q) \ar@{->>}[dr]^-{p}_{\sim} \ar@/_0.6pc/[ur]_{r} & \\
X \ar@{-->}[ur]^-{g'} \ar[rr]^{g} & & Z, \\
}
\]
there is a map $g' \colon X \to P(q)$ making the two adjacent triangles commute. Indeed, $i \colon A \inj X$ is a mixed cofibration, whereas $p \colon P(q) \surj Z$ is a mixed trivial fibration (i.e., a Hurewicz fibration which is also a weak equivalence). Take $\tild{g} = rg' \colon X \to Y$. This map satisfies the two conditions
\begin{align*}
\tild{g} i &= r g' i = r cf = f  \\
q \tild{g} &= q r g' = pc r g' \\
&\simeq p g' \: \text{ rel } A \\
&= g
\end{align*}
as desired.
\end{proof}

\begin{prop}[Pulling back distributors] \label{pr:PullBackDistrib}
Let $F \colon \cat{S} \to \cat{T}$ be a morphism of left linear $\Topp_*$-categories such that for all objects $A,B$ of $\cat{S}$, the induced map $F \colon \cat{S}(A,B) \ral{\sim} \cat{T}(FA,FB)$ is a weak equivalence. %
Assume that every mapping space $\cat{S}(A,B)$ has the homotopy type of a CW complex. If $\cat{T}$ is $N$-distributive for some $N \geq 1$ (or $N = \infty$), then $\cat{S}$ is $N$-distributive.
\end{prop}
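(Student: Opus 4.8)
The plan is to produce an $N$-distributor for $\cat{S}$ by induction on $n$, $1 \le n \le N$. By Lemma~\ref{lem:nDistribInduc} it is enough to construct, for each such $n$, a continuous family $\psi^{n} = \{ \psi_a^{x_0,\ldots,x_n} \}$ of $n$-cubes in $\cat{S}$ whose restriction to $\del I^{n}$ equals the obstruction map $\obs{\psi^{n-1}}$ coming from the previously built $(n-1)$-distributor. Fix once and for all an $N$-distributor $\phy^{0}, \phy^{1}, \ldots$ for $\cat{T}$, which exists since $\cat{T}$ is $N$-distributive.

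The naive idea --- transport the filler $\phy^{n}$ for $\cat{T}$ back along $F$ --- does not quite work, because $\obs{F\psi^{n-1}}$ and $\obs{\phy^{n-1}}$ are genuinely different $(n-1)$-distributor obstruction maps for $\cat{T}$, so $\phy^{n}$ is not a filler of the one we care about. I would therefore strengthen the induction: at stage $n$, construct \emph{both} $\psi^{n}$ and a homotopy $H^{n}$ from the pushforward $F\psi^{n}$ to $\phy^{n}$, subject to the constraint that $H^{n}$ restricts on $\del I^{n}$ to the ``obstruction homotopy'' $\obs{H^{n-1}}$ assembled from $H^{n-1}$ by the face formula. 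This makes sense because $F$, being a morphism of left linear $\Topp_{*}$-categories, commutes with external addition of cubes (Lemma~\ref{lem:AddCubes}), hence with the face formula and with obstruction maps (Lemma~\ref{lem:PreserveObstruc}); the pair $(\psi^{\bullet}, H^{\bullet})$ is essentially the data exhibiting $F$ as an $A_{\infty}$-style morphism compatible with distributors. The base case $n = 0$ is immediate: $\psi_a^{x} = ax$ is forced, $F$ is a $\Topp_{*}$-functor so $F(\psi_a^{x}) = (Fa)(Fx) = \phy_{Fa}^{Fx}$, and $H^{0}$ is constant.

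For the inductive step, assume $(\psi^{n-1}, H^{n-1})$ is in hand, and fix objects $X, A, B$ of $\cat{S}$; write $E := \cat{S}(A,B) \x \cat{S}(X,A)^{n+1}$. First transport the filler: $H^{n-1}$ induces, via the face formula, a homotopy on $\del I^{n}$ between $\obs{\phy^{n-1}}$ and $\obs{F\psi^{n-1}}$; since $\phy^{n}$ extends $\obs{\phy^{n-1}}$, the homotopy extension property of the Hurewicz cofibration $\del I^{n} \x E \hookrightarrow I^{n} \x E$ (Lemma~\ref{lem:ProdCofib}) extends it to a homotopy over $I^{n} \x E$ starting at $\phy^{n}$ and ending at a filler $\Phi \colon I^{n} \x E \to \cat{T}(FX, FB)$ of $\obs{F\psi^{n-1}}$. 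Now apply Lemma~\ref{lem:LiftWeakEq} to the commuting square with top edge $\obs{\psi^{n-1}} \colon \del I^{n} \x E \to \cat{S}(X,B)$, left edge the above Hurewicz (hence mixed) cofibration, bottom edge $\Phi$, and right edge the weak equivalence $F \colon \cat{S}(X,B) \ral{\sim} \cat{T}(FX,FB)$; it commutes since $F \circ \obs{\psi^{n-1}} = \obs{F\psi^{n-1}} = \Phi \vert_{\del I^{n} \x E}$ by Lemma~\ref{lem:PreserveObstruc}. The lemma yields a map $\psi^{n} \colon I^{n} \x E \to \cat{S}(X,B)$ with $\psi^{n} \vert_{\del I^{n} \x E} = \obs{\psi^{n-1}}$ on the nose --- so $\psi^{n}$ is an $n$-distributor based on $\psi^{n-1}$ by Lemma~\ref{lem:nDistribInduc}, and it is automatically continuous in all of $a, x_0, \ldots, x_n$ --- together with a homotopy $F\psi^{n} \simeq \Phi$ rel $\del I^{n} \x E$. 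Splicing the latter with the homotopy $\phy^{n} \simeq \Phi$ from above produces $H^{n}$, and since the first piece is rel the boundary, $H^{n} \vert_{\del I^{n}}$ is the obstruction homotopy built from $H^{n-1}$, as required. Iterating up to $n = N$ (and for all $n$ when $N = \infty$) gives the desired $N$-distributor for $\cat{S}$.

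I expect the main obstacle to be the bookkeeping concealed in the phrase ``obstruction homotopy $\obs{H^{n-1}}$'': one must make precise the notion of a homotopy between two $(n-1)$-distributors (equivalently, the relevant layer of an $A_{\infty}$ morphism between left linear $\Topp_{*}$-categories), set up its associated face formula, prove the analogue of Lemma~\ref{lem:PreserveObstruc} for it, and verify that the spliced homotopy $H^{n}$ really does restrict to $\obs{H^{n-1}}$ on $\del I^{n}$ --- this last point being exactly what feeds the next step of the induction. The remaining ingredients are already available: $F$ preserves external addition and hence face formulas, products with a cube preserve (Hurewicz) cofibrations, and mixed cofibrations lift against the relevant weak equivalences by Lemma~\ref{lem:LiftWeakEq}; the hypothesis that the mapping spaces of $\cat{S}$ have the homotopy type of CW complexes is what lets one treat $F$ as an honest homotopy equivalence in the transport and splicing steps.
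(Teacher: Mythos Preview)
Your approach is essentially identical to the paper's: strengthen the induction to carry a compatible homotopy $H^{n}$ between $F\psi^{n}$ and the given distributor on $\cat{T}$, use the homotopy extension property to slide the given filler onto the correct boundary, then apply Lemma~\ref{lem:LiftWeakEq} and splice the two homotopies. The bookkeeping concern you flag about $\obs{H^{n-1}}$ is exactly the compatibility condition~\eqref{eq:InducHyp} the paper imposes, and the paper handles it with the same face-formula formalism.

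One small correction: your parenthetical ``Hurewicz (hence mixed) cofibration'' is not valid in general, and your closing sentence misidentifies where the CW hypothesis enters. Lemma~\ref{lem:LiftWeakEq} only requires $F$ to be a weak equivalence on the right; what it requires on the left is a \emph{mixed} cofibration, and $\del I^{n} \times E \hookrightarrow I^{n} \times E$ is one precisely because $E$ is mixed-cofibrant, i.e., has the homotopy type of a CW complex (Lemma~\ref{lem:ProdCofib}, mixed case). That is the actual role of the CW hypothesis.
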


\begin{proof}
Let $\psi^N$ be an $N$-distributor for $\cat{T}$. We will prove the statement by induction, using the following condition for $n \leq N$.

\begin{itemize}
\item There is given an $n$-distributor $\phy^n$ for $\cat{S}$, based on $\phy^{n-1}$.
\item There is given a homotopy
\[
h^n \colon \psi^n F \simeq F \phy^n
\]
which is compatible with the previous steps in the following sense. For every proper subcube $C_{\si} \subset I^n$, of dimension $\dim \si = d < n$, the restriction of $h^n$ to $C_{\si}$ satisfies
\begin{equation} \label{eq:InducHyp}
h^n \vert_{C_{\si} \x I} = h^d [\si] \colon \left( \psi^d F \right) [\si] \simeq F \left( \phy^d [\si] \right).
\end{equation}
\end{itemize}
Here $\psi^n F$ denotes the collection of cubes
\[
\psi^n F = \left\{ \psi_{Fa}^{Fx_0, \ldots, Fx_n} \mid a, x_0, \ldots, x_n \in \cat{S} \right\}
\]
and $h^d [\si]$ is defined by the analogue of the formula that defines $\phy^d [\si]$, applied at each time of the homotopy.

\textbf{Base case $n=0$.} The $0$-distributor $\phy^0$ for $\cat{S}$ satisfies $F \phy^0 = \psi^0 F$, i.e., for $a,x_0 \in \cat{S}$, we have
\[
F (\phy_a^{x_0}) = F (ax_0) = (Fa)(Fx_0) = \psi_{Fa}^{Fx_0}.
\]
Take $h^0$ to be the stationary homotopy between $F \phy^0$ and $\psi^0 F$.

\textbf{Inductive step from $n-1$ to $n$.} The two composites in the square
\[
\xymatrix @C=\bigcol @R=\bigrow {
\del I^n \x \cat{S}(A,B) \x \cat{S}(X,A)^{n+1} \ar@{^{(}->}[d]_{\io} \ar[r]^-{\obs{\phy^{n-1}}} & \cat{S} (X,B) \ar[d]^{F}_{\sim} \\
I^n \x \cat{S}(A,B) \x \cat{S}(X,A)^{n+1} \ar[d]_{\id \x F \x F} \ar[r]^-{\psi^n F} & \cat{T} (FX, FB) \ultwocell<\omit>{\hspace{-2.5em}\obs{h^{n-1}}} \\
I^n \x \cat{T}(FA,FB) \x \cat{T}(FX,FA)^{n+1} \ar[ur]_-{\psi^n} & \\
}
\]
are $\psi^n F \vert_{\del I^n} = \obs{\psi^{n-1} F}$ and $F \obs{\psi^{n-1}}$. By induction hypothesis and Lemma~\ref{lem:PreserveObstruc}, the given homotopies $h^{n-1}$ define a homotopy
\[
\obs{h^{n-1}} \colon \obs{\psi^{n-1} F} \simeq F \obs{\phy^{n-1}}.
\] 
By Lemma~\ref{lem:ProdCofib}, the map $\del I^n \x \cat{S}(A,B) \x \cat{S}(X,A)^{n+1} \inj I^n \x \cat{S}(A,B) \x \cat{S}(X,A)^{n+1}$ is a Hurewicz cofibration. By the homotopy extension property, there is a homotopy
\[
\tild{h}^n \colon I^n \x \cat{S}(A,B) \x \cat{S}(X,A)^{n+1} \x I \to \cat{T}(FX,FB)
\]
extending $\obs{h^{n-1}}$ and starting at $\psi^n F$. Denote the end of the homotopy by $\tild{\psi}^n F \dfn \tild{h}^n_1$, which satisfies
\[
\tild{\psi}^n F \vert_{\del I^n} = \tild{h}^n_1 \vert_{\del I^n} = \obs{h^{n-1}}_1 = F \obs{\phy^{n-1}}.
\]
Recall that spaces of the homotopy type of a CW complex are precisely the cofibrant objects in the mixed model structure on $\Topp$. In the commutative square
\[
\xymatrix @C=\bigcol @R=\bigrow {
\del I^n \x \cat{S}(A,B) \x \cat{S}(X,A)^{n+1} \ar[d] \ar[r]^-{\obs{\phy^{n-1}}} & \cat{S}(X,B) \ar[d]^{F} \\
I^n \x \cat{S}(A,B) \x \cat{S}(X,A)^{n+1} \ar@{-->}[ur]^-{\phy^n} \ar@{}@<3.0ex>[r]^(0.8){\Uparrow \ga^n} \ar[r]^-{\tild{\psi}^n F} & \cat{T}(FX,FB), \\
}
\]
there is a map $\phy^n \colon I^n \to \cat{S}$ making the top triangle commute strictly and the bottom triangle commute up to homotopy rel $\del I^n$, by Lemma~\ref{lem:LiftWeakEq}. %
Thus $\phy^n$ is an $n$-distributor for $\cat{S}$, based on $\phy^{n-1}$. Now, define $h^n$ as the concatenation of the two homotopies
\[
\xymatrix{
\psi^n F \ar[r]^-{\simeq}_-{\tild{h}^n} & \tild{\psi}^n F \ar[r]^-{\simeq}_-{\ga^n} & F \phy^n. \\
}
\]
Since the homotopy $\ga^n$ is rel $\del I^n$ and the homotopy $\tild{h}^n$ restricts to $\obs{h^{n-1}}$ on $\del I^n$, the homotopy $h^n$ satisfies the compatibility Equation~\eqref{eq:InducHyp}, completing the inductive step.
\end{proof}

\subsection{Pushing forward distributors} \label{sec:Pushforward}

Let us recall some facts about higher associativity, which will be used later.

\begin{defn}[\cite{Stasheff70}*{Definition~8.2}]
Let $M$ and $N$ be topological monoids and $n \geq 1$ an integer. An \Def{$A_{n}$ structure} on a continuous map $f \colon M \to N$ is a family of maps $\de_i \colon I^{i-1} \x M^i \to N$ for $1 \leq i \leq n$ satisfying the following:
\begin{itemize}
\item $\de_1 = f$.
\item For $2 \leq i \leq n$, the following boundary conditions hold:
\[
\resizebox{1.0\textwidth}{!}{
$\de_i(t_1, \ldots, t_{i-1}; x_1, \ldots, x_i) = \begin{cases}
\de_{i-1}(t_1, \ldots, \widehat{t_j}, \ldots, t_{i-1}; x_1, \ldots, x_j x_{j+1}, \ldots, x_i) &\text{if } t_j = 0 \\
\de_{j}(t_1, \ldots, t_{j-1}; x_1, \ldots, x_j) \de_{i-j}(t_{j+1}, \ldots, t_{i-1}; x_{j+1}, \ldots, x_i) &\text{if } t_j=1. \\
\end{cases}$
}
\]
\end{itemize}
\end{defn}

\begin{ex}
An $A_1$ structure on $f \colon M \to N$ consists of no additional data. An $A_2$ structure consists of paths
\[
\de^{x,y} \dfn \de_2(-;x,y) \colon I \to N
\]
from $f(xy)$ to $f(x)f(y)$, depending continuously on the inputs $x,y \in M$. In other words, the map $f$ is $A_2$ if and only if it preserves the multiplication up to homotopy.

\end{ex}

\begin{rem}
As in Remark~\ref{rem:HighestDim}, an $A_n$ structure $\de = (\de_1, \ldots, \de_n)$ on a  map $f \colon M \to N$ is determined by the highest dimensional part $\de_n$, since $M$ is strictly unital. The same would not hold if $M$ were %
unital up to homotopy.
\end{rem}

Let us recast Definition~\ref{def:nDistrib} in this terminology. An $n$-distributor $\phy^n$ for a left linear $\Topp_*$-category $\cat{T}$ consists of the following data: For each $a \in \cat{T}(A,B)$, an $A_{n+1}$ structure $\phy_a$ for left multiplication by $a$, i.e., postcomposition 
\[
a_* \colon \cat{T}(X,A) \to \cat{T}(X,B),
\]
with respect to addition in the mapping spaces $\cat{T}(X,A)$ and $\cat{T}(X,B)$. These $A_{n+1}$ structures $\phy_a$ are required to depend continuously on $a \in \cat{T}$. 
Note that our indexing counts the number of plus signs in $a(x_0 + \ldots + x_n)$, which agrees with the dimension of the cube $\phy_a^{x_0, \ldots, x_n} \colon I^n \to \cat{T}(X,B)$. 

The following result is due to Fuchs \cite{Fuchs65} and can be found in \cite{Stasheff70}*{\S 8}; c.f. \cite{BoardmanV73}*{\S 4.3}.

\begin{lem} \label{lem:CompoAn}
\begin{enumerate}
\item \label{eq:CompoAn} A composition of $A_n$ maps is an $A_n$ map.
\item A map homotopic to an $A_n$ map is an $A_n$ map.
\item \label{eq:HomotInv} Let $f \colon M \ral{\simeq} N$ be a homotopy equivalence between topological monoids. Then $f$ is an $A_n$ map if and only if any homotopy inverse $g \colon N \ral{\simeq} M$ is $A_n$. 
\end{enumerate}
\end{lem}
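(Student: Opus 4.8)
The plan is to establish the three assertions in turn, proving \eqref{eq:CompoAn} and the second by explicit constructions on cubes and deducing \eqref{eq:HomotInv} from the second together with a conjugation argument. (Recall that a strict monoid homomorphism, in particular an identity, carries the $A_{\infty}$ structure in which all cubes $\de_i$ with $i \geq 2$ are constant, so it is an $A_n$ map for every $n$.) For \eqref{eq:CompoAn}, suppose $f \colon M \to N$ and $g \colon N \to P$ carry $A_n$ structures $\{\de^f_i\}$ and $\{\de^g_i\}$. I would build an $A_n$ structure $\{\de_i\}$ on $g \circ f$ by subdividing the parameter cube: for $i = 2$ the path $\de_2(-; x_1, x_2)$ from $gf(x_1 x_2)$ to $gf(x_1)\, gf(x_2)$ is $g \circ \de^f_2(-; x_1, x_2)$ followed by $\de^g_2(-; f x_1, f x_2)$, and for general $i$ the cube $\de_i \colon I^{i-1} \times M^i \to P$ is assembled from $g \circ \de^f_i$ and from $\de^g_i(-; f x_1, \ldots, f x_i)$ by a concatenation compatible with the product maps occurring in the definition. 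The remaining content is to verify the boundary conditions; these follow by induction on $i$ from the boundary conditions for $\de^f$ and $\de^g$ together with the monoid axioms for $M$, $N$, $P$, and this is the bookkeeping carried out in \cite{Fuchs65} and \cite{Stasheff70}*{\S 8}.

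For the second assertion, let $f'$ be homotopic to an $A_n$ map $f$ via a homotopy $h$. I would construct an $A_n$ structure $\{\de'_i\}$ on $f'$ by induction on $i$, carrying along at each stage a homotopy from $\de_i$ to $\de'_i$ rel boundary. Once the cubes $\de'_j$ for $j < i$ and the corresponding connecting homotopies have been chosen, the boundary conditions determine the restriction of the sought cube to $\partial I^{i-1} \times M^i$; the old cube $\de_i$, the homotopy $h$, and the lower connecting homotopies then fit together into a map defined on $\bigl[(\partial I^{i-1} \times I) \cup (I^{i-1} \times \{0\})\bigr] \times M^i$. By the homotopy extension property --- valid with parameters in $M^i$ since $\partial I^{i-1} \inj I^{i-1}$ is a closed cofibration --- this map extends over $I^{i-1} \times I \times M^i$, and its top face furnishes $\de'_i$ along with the next connecting homotopy. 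Continuity in the inputs $x_1, \ldots, x_i$ is automatic. This is the ``fill a box minus its top face'' device already used in the proofs of Lemma~\ref{lem:LiftWeakEq} and Proposition~\ref{pr:PullBackDistrib}.

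For \eqref{eq:HomotInv}, note that $f$ and any homotopy inverse $g$ are mutually homotopy inverse, so the equivalence is symmetric and it suffices to show: if $f$ is an $A_n$ map and a homotopy equivalence, then some --- hence, by the second assertion, every --- homotopy inverse $g$ is an $A_n$ map. Fix homotopies $G \colon g f \simeq \id_M$ and $H \colon f g \simeq \id_N$. The point is that $g(y_1 \cdots y_i)$ and $g(y_1) \cdots g(y_i)$ become comparable after passing through $f$: for $i = 2$, the concatenation of $g$ applied to $s \mapsto H(y_1, s) \cdot H(y_2, s)$, then $g \circ \de^f_2(-; g y_1, g y_2)$ reversed, then $G$ evaluated at $g(y_1) g(y_2)$, is a path from $g(y_1 y_2)$ to $g(y_1)\, g(y_2)$; in general the cube $g \circ \de^f_i(-; g y_1, \ldots, g y_i)$, pre- and post-composed with homotopies built from $H$ and $G$, fills the boundary prescribed by the cubes $\de^g_j$ with $j < i$. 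Running the inductive box-filling scheme of the previous paragraph then produces the $A_n$ structure $\{\de^g_i\}$. This is essentially the construction of \cite{Fuchs65}; see also \cite{Stasheff70}*{\S 8} and \cite{BoardmanV73}*{\S 4.3}.

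I expect the main obstacle to be the combinatorial bookkeeping of boundary conditions rather than any homotopy-theoretic input (the homotopy extension property and the existence of $G$, $H$ being routine). It is most delicate in \eqref{eq:CompoAn}, where the composite cube must be subdivided so that each proper face specializes to the prescribed lower data, and it requires the most setup in \eqref{eq:HomotInv}, where one must check that $g \circ \de^f_i$, after correction by $H$ and $G$, restricts on $\partial I^{i-1}$ precisely to the obstruction assembled from the $\de^g_j$ with $j < i$.
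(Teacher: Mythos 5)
The paper gives no proof of Lemma~\ref{lem:CompoAn}; it simply cites Fuchs \cite{Fuchs65} and Stasheff \cite{Stasheff70}*{\S 8} (c.f.\ \cite{BoardmanV73}*{\S 4.3}) and records the additional observation, needed later, that Fuchs's explicit formula for the composite $A_n$ structure is continuous in the inputs. So there is no argument in the paper to compare against line by line: your sketch supplies an outline where the paper supplies a reference.

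Your overall strategy matches the cited sources: explicit cube constructions for composition, an inductive homotopy-extension (``fill the box minus its top face'') argument for invariance under homotopy, and a conjugation-plus-symmetry reduction for item~\eqref{eq:HomotInv}. The sketch of item~\eqref{eq:HomotInv} is fine, including the reduction to the ``only if'' direction and the observation that part (2) upgrades ``some homotopy inverse'' to ``every homotopy inverse.'' The one place where your description is a bit too optimistic is item~\eqref{eq:CompoAn}: you describe $\de_i$ for the composite as assembled from $g\circ\de^f_i$ and $\de^g_i(-;fx_1,\ldots,fx_i)$ ``by a concatenation.'' Already at $i=3$ a two-fold concatenation of those two $2$-cubes has the wrong boundary: at $t_2=0$ it restricts to the single path $g\de^f_2(t_1;x_1,x_2x_3)$ rather than to the required concatenation $\bigl(g\de^f_2(t_1;x_1,x_2x_3)\bigr)\ast\bigl(\de^g_2(t_1;fx_1,f(x_2x_3))\bigr)$. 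The actual subdivision of $I^{i-1}$ is more elaborate and involves the lower cubes $\de^g_j$ (for $j<i$) applied to mixtures of $\de^f$-cubes and points, not just the top-dimensional cubes $\de^f_i$ and $\de^g_i$. You do flag this as ``the bookkeeping carried out in Fuchs and Stasheff,'' which is an honest deferral, but the phrase ``concatenation compatible with the product maps'' understates what is needed. Since the paper relies on the literature for exactly this point — and even calls on the explicit nature of Fuchs's formula to get continuity of the composite structure — it would be worth either spelling out the $i=3$ decomposition to exhibit the pattern, or stating plainly that the subdivision is nontrivial and that only Fuchs's explicit description guarantees the continuity used in Proposition~\ref{pr:PushForwardDistrib}.
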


Moreover, it follows from the explicit construction in \cite{Fuchs65} that the $A_n$ structure of a composite $gf$ depends continuously on the $A_n$ structures of $f$ and $g$.

\begin{prop}[Pushing forward distributors] \label{pr:PushForwardDistrib}
Let $F \colon \cat{S} \to \cat{T}$ be a morphism of left linear $\Topp_*$-categories such that for all objects $A,B$ of $\cat{S}$, the induced map $F \colon \cat{S}(A,B) \ral{\simeq} \cat{T}(FA,FB)$ is a homotopy equivalence. 
Assume that the functor $\pi_0 F \colon \pi_0 \cat{S} \to \pi_0 \cat{T}$ is essentially surjective. If $\cat{S}$ is $N$-distributive for some $N \geq 1$ (or $N = \infty$), then $\cat{T}$ is $N$-distributive.
\end{prop}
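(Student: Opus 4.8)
The plan is to use the $A_\infty$-reformulation recorded just before the statement: an $N$-distributor on a left linear $\Topp_*$-category $\cat{T}$ is the same thing as a choice, continuous in $a\in\cat{T}$, of an $A_{N+1}$-structure on the postcomposition map $a_*\colon \cat{T}(X,A)\to\cat{T}(X,B)$ of topological monoids (an $A_\infty$-structure when $N=\infty$). So the task reduces to equipping each $b_*\colon\cat{T}(Y,C)\to\cat{T}(Y,D)$ with an $A_{N+1}$-structure depending continuously on $b\colon C\to D$, for every source object $Y$, and the workhorse is Lemma~\ref{lem:CompoAn}: $A_{N+1}$-maps are closed under composition and homotopy, homotopy inverses of $A_{N+1}$-maps are $A_{N+1}$, and all of this is continuous in the ambient data. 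I will use repeatedly that an additive map is $A_{N+1}$ with the trivial structure; this applies to $F$ itself and, by left linearity, to every precomposition map $d^*$.

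The first step is to reduce to the case where the source object $Y$ lies in the image of $F$. By essential surjectivity of $\pi_0 F$, choose a homotopy equivalence $w\colon F\ol{Y}\ral{\simeq} Y$ with homotopy inverse $w'$. The maps $w^*,(w')^*$ are additive and mutually homotopy inverse, and $w^*\circ b_*^{[Y]} = b_*^{[F\ol{Y}]}\circ w^*$ holds strictly, so $b_*^{[Y]}\simeq (w')^*\circ b_*^{[F\ol{Y}]}\circ w^*$. By Lemma~\ref{lem:CompoAn}\eqref{eq:CompoAn} together with the homotopy-invariance clause, an $A_{N+1}$-structure on $b_*^{[F\ol{Y}]}$ transports to one on $b_*^{[Y]}$, continuously in $b$.

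Next, fix $b\colon C\to D$ and a source $F\ol{Y}$. Choose homotopy equivalences $u_C\colon F\ol{C}\ral{\simeq}C$ and $u_D\colon F\ol{D}\ral{\simeq}D$ with homotopy inverses $v_C,v_D$, and using a continuous homotopy inverse $G$ of the homotopy equivalence $F\colon\cat{S}(\ol{C},\ol{D})\ral{\simeq}\cat{T}(F\ol{C},F\ol{D})$, set $\hat b := G(v_D\,b\,u_C)\in\cat{S}(\ol{C},\ol{D})$; then $\hat b$ depends continuously on $b$, $F\hat b\simeq v_D\,b\,u_C$, hence $b\simeq u_D(F\hat b)v_C$ and
\[
b_* \;\simeq\; (u_D)_*\circ (F\hat b)_*\circ (v_C)_* .
\]
The inner factor carries an $A_{N+1}$-structure: since $F$ is a functor, $F\circ\hat b_* = (F\hat b)_*\circ F$ strictly, and $F\colon\cat{S}(\ol{Y},\ol{C})\ral{\simeq}\cat{T}(F\ol{Y},F\ol{C})$ is an additive homotopy equivalence with a homotopy inverse $G_C$, so $(F\hat b)_*\simeq F\circ\hat b_*\circ G_C$; here $\hat b_*$ is $A_{N+1}$ by the given distributor of $\cat{S}$, $F$ is $A_{N+1}$ (additive), and $G_C$ is $A_{N+1}$ by Lemma~\ref{lem:CompoAn}\eqref{eq:HomotInv}, so $(F\hat b)_*$ is $A_{N+1}$ by Lemma~\ref{lem:CompoAn}\eqref{eq:CompoAn} and homotopy-invariance, all continuously in $b$.

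The main obstacle is the two outer factors. The maps $(v_C)_*$ and $(u_D)_*$ are postcompositions by homotopy equivalences, hence are homotopy equivalences of the mapping monoids (with homotopy inverses $(u_C)_*$ and $(v_D)_*$), but they are not additive, so their $A_{N+1}$-structures are not inherited directly from $\cat{S}$. The way I would settle this is to run the construction simultaneously over all objects: let $\Sigma$ be the class of morphisms $c$ of $\cat{T}$ for which $c_*$ admits an $A_{N+1}$-structure depending continuously on $c$; by Lemma~\ref{lem:CompoAn} the class $\Sigma$ is closed under composition and homotopy and under passage to homotopy inverses of homotopy equivalences, and by the previous paragraph it contains every $F$-image $F(s)$. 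The crux — which I expect to be the real work — is then to show that $\Sigma$ contains each chosen equivalence $u_Z$ (equivalently $v_Z$) for every object $Z$; I would arrange this by taking the replacements and equivalences to be identities on objects already in the image of $F$ and by organizing the choices so that the self-referential condition for the remaining $u_Z$ is discharged compatibly with the inductive construction. Granting this, the composite $u_D(F\hat b)v_C$ lies in $\Sigma$, hence so does $b_*$ by closure under homotopy; the resulting $A_{N+1}$-structures, assembled from fixed ones by the continuous operations of Lemma~\ref{lem:CompoAn}, vary continuously in $b$, and when $N=\infty$ the argument is carried out level by level, producing the desired $\infty$-distributor on $\cat{T}$.
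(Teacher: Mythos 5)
Your reduction to $A_{N+1}$-structures via Lemma~\ref{lem:CompoAn} is the right framework, and the paper uses it too, but your decomposition $b_* \simeq (u_D)_*\circ(F\hat b)_*\circ(v_C)_*$ runs into a circularity that you identify but do not resolve. The maps $(u_D)_*$ and $(v_C)_*$ are postcompositions by morphisms of $\cat{T}$ that do not lie in the image of $F$, so equipping them with $A_{N+1}$-structures is not a preliminary step — it is a special case of the very thing you are trying to prove ($N$-distributivity of $\cat{T}$ is precisely the assertion that postcomposition by arbitrary $c\in\cat{T}$ is $A_{N+1}$, continuously in $c$). Your proposed class $\Sigma$ is closed under composition and homotopy and contains the $F$-images, but nothing in your argument feeds any non-$F$-image morphism into $\Sigma$; the suggestion to ``organize the choices so that the self-referential condition is discharged'' is a placeholder for an argument, not an argument.

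The paper sidesteps this entirely by a different decomposition. It factors $F$ through its object-image, $\cat{S}\to\ObIm(F)\to\cat{T}$, and splits the proof into Case~(a), $F$ surjective on objects, and Case~(b), $F$ the identity on mapping spaces with $\pi_0 F$ an equivalence. In Case~(a) there is no conjugation at all: for $a\in\cat{T}(A,B)$ one takes a continuous homotopy inverse $G$ of $F$ on mapping spaces and considers the composite $(Ga)_*\circ G\colon\cat{T}(X,A)\to\cat{S}(GX,GB)$, where $G$ is $A_\infty$ (homotopy inverse of the additive $F$, by Lemma~\ref{lem:CompoAn}\eqref{eq:HomotInv}) and $(Ga)_*$ is $A_{N+1}$ from the given distributor on $\cat{S}$ — every $A_\infty$-manipulation happens inside $\cat{S}$. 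Applying the strictly additive $F$ then lands the resulting cubes in $\cat{T}(FGX,FGB)=\cat{T}(X,B)$, the correct target, precisely because $FG=\id$ on objects. The corners of these cubes are only homotopic, not equal, to $a(x_0+\cdots+x_n)$ and $ax_0+\cdots+ax_n$; the paper repairs this by an explicit level-by-level induction using the homotopy extension property for $\del I^n\hookrightarrow I^n$, maintaining compatible homotopies $h^n$ — this also handles the continuity in $a$ explicitly, rather than relying on a continuous version of ``a map homotopic to an $A_n$ map is $A_n$''. The essential surjectivity of $\pi_0 F$, which is where your equivalences $u_Z$ would enter, is isolated in Case~(b), where $F$ is an identity on mapping spaces and the transport goes through a single conjugation diagram treated by the same inductive HEP argument. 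If you want to salvage your approach, the missing idea is to push the $A_\infty$-structure forward from $\cat{S}$ rather than attempt to build it out of pieces inside $\cat{T}$.
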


\begin{proof}
Consider the factorization of $F \colon \cat{S} \to \cat{T}$ as
\[
\cat{S} \to \ObIm(F) \to \cat{T},
\]
where the ``object-image'' of the functor $F$ is the full $\Topp_*$-subcategory of $\cat{T}$ consisting of objects of the form $FX$ for some object $X$ of $\cat{S}$. Then both functors $\cat{S} \to \ObIm(F)$ and $\ObIm(F) \to \cat{T}$ of this factorization satisfy the assumptions of the statement. This reduces the general statement to the following cases.
\begin{enumerate}
\item[Case (a).] $F$ is surjective on objects.
\item[Case (b).] $F$ is the identity on each mapping space, i.e., $F \colon \cat{S}(A,B) \ral{=} \cat{T}(FA,FB)$.
\end{enumerate}

\textbf{Proof for Case (a).}
For each object $A$ of $\cat{T}$, choose an object denoted $GA$ of $\cat{S}$ satisfying $FGA = A$. %
For each pair of objects $A,B$ of $\cat{T}$, choose a homotopy inverse $G \colon \cat{T}(A,B) \ral{\simeq} \cat{S}(GA,GB)$ to the map $F \colon \cat{S}(GA,GB) \ral{\simeq} \cat{T}(A,B)$, along with a homotopy $h \colon \cat{T}(A,B) \x I \to \cat{T}(A,B)$ from the identity to $FG$. Note that $G \colon \cat{T} \to \cat{S}$ is \emph{not} a functor, as it preserves neither composition nor identities $1_A \in \cat{T}(A,A)$.

Since $F$ preserves addition, it is in particular $A_{\infty}$ with respect to addition. By Lemma~\ref{lem:CompoAn}\eqref{eq:HomotInv}, $G \colon \cat{T}(A,B) \to \cat{S}(GA,GB)$ admits an $A_{\infty}$-structure with respect to addition, which we denote $\ga$. 
For any $x_0 \ldots, x_n \in \cat{T}(X,A)$, we denote by $\ga^{x_0, \ldots, x_n} \colon I^n \to \cat{S}(GX,GA)$ the corresponding $n$-cube with extreme corners $G(x_0 + \ldots + x_n)$ and $Gx_0 + \ldots + Gx_n$. 

Let $\phy^N$ be an $N$-distributor for $\cat{S}$. 
For every $a \in \cat{T}(A,B)$, consider $Ga \in \cat{S}(GA,GB)$ and the composite
\[
\xymatrix @R=0.5pc {
\cat{T}(X,A) \ar[r]^-{G} & \cat{S}(GX,GA) \ar[r]^-{(Ga)_*} & \cat{S}(GX,GB) \\
x \ar@{|->}[r] & Gx \ar@{|->}[r] & (Ga)(Gx) \\
}
\]
of $G$ and left multiplication by $Ga$, both of which are $A_{n+1}$ with respect to addition. By Lemma~\ref{lem:CompoAn}\eqref{eq:CompoAn}, this composite inherits an $A_{n+1}$-structure with respect to addition, which we denote $\xi^n$. This $A_{n+1}$ structure $\xi^n$ depends continuously on the element $Ga \in \cat{S}(GA,GB)$, and therefore on $a \in \cat{T}(A,B)$.

For instance, $\xi_a^{x,y} \colon I^1 \to \cat{S}(GX,GB)$ is the concatenation of paths:
\[
\begin{tikzpicture}[scale=2]
\draw [->-=0.5] (0,0) -- (2,0);
\draw [->-=0.5] (2,0) -- (4,0);
\draw [fill=\VertexColor] (0,0) circle (\VertexSize);
\node [below left] at (0,0) {$(Ga) G(x + y)$};
\draw [fill=\VertexColor] (2,0) circle (\VertexSize);
\node [below] at (2,0) {$(Ga) (Gx + Gy)$};
\draw [fill=\VertexColor] (4,0) circle (\VertexSize);
\node [below right] at (4,0) {$Ga Gx + Ga Gy$.};
\node [above, text=\EdgeColor] at (1,0) {$(Ga) \ga^{x,y}$};
\node [above, text=\EdgeColor] at (3,0) {$\phy_{Ga}^{Gx,Gy}$};
\end{tikzpicture}
\]

Now we prove the statement by induction, using the following condition for $n \leq N$.

\begin{itemize}
\item There is given an $n$-distributor $\psi^n$ for $\cat{T}$, based on $\psi^{n-1}$.
\item There is given a homotopy
\[
h^n \colon \psi^n \simeq F (\xi^n)
\]
which is compatible with the previous steps in the following sense. For every proper subcube $C_{\si} \subset I^n$, of dimension $\dim \si = d < n$, the restriction of $h^n$ to $C_{\si}$ satisfies
\begin{equation}\label{eq:InducHypForward}
h^n \vert_{C_{\si} \x I} = h^d [\si] \colon \left( \psi^d \right) [\si] \simeq F \left( \xi^d [\si] \right).
\end{equation}
\end{itemize}

\textbf{Base case $n=0$.} The $0$-distributor $\psi^0$ for $\cat{T}$ is forced to be $\psi_a^{x} = ax$. For the homotopy $h^0 \colon \psi^0 \simeq F (\xi^0)$, take the path
\[
\begin{tikzpicture}[scale=2]
\draw [->-=0.5] (0,0) -- (2,0);
\draw [fill=\VertexColor] (0,0) circle (\VertexSize);
\node [below left] at (0,0) {$ax$};
\draw [fill=\VertexColor] (2,0) circle (\VertexSize);
\node [below right] at (2,0) {$(FGa)(FGx)$.};
\node [above, text=\EdgeColor] at (1,0) {$(h^0)_a^x \dfn h_a h_x$};
\end{tikzpicture}
\]
where $h_a \dfn h(a,-) \colon I \to \cat{T}(A,B)$ is the path following $a$ throughout the homotopy $h \colon \cat{T}(A,B) \x I \to \cat{T}(A,B)$. %
The compatibility condition on $h^0$ is vacuous, since $I^0$ has no proper subcube.

\textbf{Inductive step from $n-1$ to $n$.} By induction hypothesis and Lemma~\ref{lem:PreserveObstruc}, the given homotopies $h^{n-1}$ define a homotopy
\[
\obs{h^{n-1}} \colon \obs{\psi^{n-1}} \simeq F \obs{\xi^{n-1}}.
\] 
By Lemma~\ref{lem:ProdCofib}, the map $\del I^n \x \cat{T}(A,B) \x \cat{T}(X,A)^{n+1} \inj I^n \x \cat{T}(A,B) \x \cat{T}(X,A)^{n+1}$ is a Hurewicz cofibration. By the homotopy extension property, there is a homotopy
\[
\tild{h}^n \colon I^n \x \cat{T}(A,B) \x \cat{T}(X,A)^{n+1} \x I \to \cat{T}(X,B)
\]
extending $\obs{h^{n-1}}$ and ending at $F (\xi^n)$. Denote the start of the homotopy by $\psi^n \dfn \tild{h}^n_0$, which satisfies
\[
\psi^n \vert_{\del I^n} = \tild{h}^n_0 \vert_{\del I^n} = \obs{h^{n-1}}_0 = \obs{\psi^{n-1}},
\]
so that $\psi^n$ is an $n$-distributor for $\cat{T}$ based on $\psi^{n-1}$. Moreover, the homotopy $\tild{h}^n \colon \psi^n \simeq F (\xi^n)$ satisfies the compatibility Equation~\eqref{eq:InducHypForward}.

\textbf{Proof for Case (b).} The functor $\pi_0 F \colon \pi_0 \cat{S} \to \pi_0 \cat{T}$ is an equivalence of categories. Choose and inverse equivalence $G \colon \pi_0 \cat{T} \to \pi_0 \cat{S}$, with a natural isomorphism $\ep \colon (\pi_0 F) G \ral{\cong} \id_{\pi_0 \cat{T}}$. For every object $X$ of $\cat{T}$, consider the inverse isomorphisms
\[
\ep_X \in (\pi_0\cat{T})(FGX,X) \quad \text{ and } \quad \ep_X^{-1} \in (\pi_0\cat{T})(X,FGX)
\]
and choose representative maps
\[
\tild{\ep_X} \in \cat{T}(FGX,X) \quad \text{ and } \quad \tild{\ep_X^{-1}} \in \cat{T}(X,FGX).
\]
By construction, these maps $\tild{\ep_X} \colon FGX \ral{\simeq} X$ and $\tild{\ep_X^{-1}} \colon X \ral{\simeq} FGX$ are inverse homotopy equivalences, and hence induce homotopy equivalences on mapping spaces, upon applying functors of the form $\cat{T}(W,-)$ or $\cat{T}(-,Z)$. Now let $X,A,B$ be objects of $\cat{T}$ and consider the diagram:
\[
\resizebox{1.0\textwidth}{!}{
\xymatrix{
\del I^n \x \cat{S}(GA,GB) \x \cat{S}(GX,GA)^{n+1} \ar@{=}[d] & & \cat{S}(GX,GB) \ar@{=}[d] \\
\del I^n \x \cat{T}(FGA,FGB) \x \cat{T}(FGX,FGA)^{n+1} \ar[ddd]^(0.6){\simeq}_(0.6){\id \x (\tild{\ep_B})_* (\tild{\ep_A^{-1}})^* \x (\tild{\ep_A})_* (\tild{\ep_X^{-1}})^*} \ar@{^{(}->}[dr] \ar[rr]^-{\obs{\phy^{n-1}}} & & \cat{T}(FGX,FGB) \ar[ddd]_(0.6){\simeq}^(0.6){(\tild{\ep_B})_* (\tild{\ep_X^{-1}})^*} \\
& \mathmakebox[7pc][r]{I^n \x \cat{T}(FGA,FGB) \x \cat{T}(FGX,FGA)^{n+1}} \ar[ddd]^(0.4){\simeq} \ar[ur]_{\phy^n} & \\
& & \\
\del I^n \x \cat{T}(A,B) \x \cat{T}(X,A)^{n+1} \ar@{^{(}->}[dr] \ar[rr]^-(0.4){\obs{\psi^{n-1}}} & & \cat{T}(X,B) \\
& \mathmakebox[5pc][r]{I^n \x \cat{T}(A,B) \x \cat{T}(X,A)^{n+1}} \ar@{-->}[ur]_{\psi^n} & \\
}
}
\]
where the back and front right faces need not commute strictly. By the same inductive argument as in Case~(a), we can push forward the $n$-distributor $\phy^n$ for $\cat{S}$ along the downward homotopy equivalences to produce an $n$-distributor $\psi^n$ for $\cat{T}$.
\end{proof}

\begin{defn}
A $\Topp$-functor $F \colon \cat{C} \to \cat{D}$ is a \Def{Dwyer--Kan equivalence} if for all objects $A,B$ of $\cat{C}$, the map $F \colon \cat{C}(A,B) \ral{\sim} \cat{D}(FA,FB)$ is a weak equivalence, and the functor $\pi_0 F \colon \pi_0 \cat{C} \ral{\simeq} \pi_0 \cat{D}$ is an equivalence of categories.
\end{defn}

\begin{cor}[Homotopy invariance] \label{cor:HomotInvar}
Let $F \colon \cat{S} \to \cat{T}$ be a morphism of left linear $\Topp_*$-categories 
which is moreover a Dwyer--Kan equivalence.

Assume that all mapping spaces in $\cat{S}$ and in $\cat{T}$ have the homotopy type of a CW complex. Then for every $n \geq 1$ (or $n = \infty$), $\cat{S}$ is $n$-distributive if and only if $\cat{T}$ is $n$-distributive.
\end{cor}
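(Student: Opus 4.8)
The plan is to deduce the statement directly by combining the two transfer results just proved: Proposition~\ref{pr:PullBackDistrib} for the ``if'' direction and Proposition~\ref{pr:PushForwardDistrib} for the ``only if'' direction. Both propositions allow $N = \infty$, so the case $n = \infty$ requires no separate treatment.

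First I would handle the ``if'' direction. Suppose $\cat{T}$ is $n$-distributive. By definition, a Dwyer--Kan equivalence induces a weak equivalence $F \colon \cat{S}(A,B) \ral{\sim} \cat{T}(FA,FB)$ on every mapping space, and by hypothesis every $\cat{S}(A,B)$ has the homotopy type of a CW complex. These are exactly the hypotheses of Proposition~\ref{pr:PullBackDistrib}, so $\cat{S}$ is $n$-distributive, with no further work needed.

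Conversely, suppose $\cat{S}$ is $n$-distributive and aim to apply Proposition~\ref{pr:PushForwardDistrib}. The one point requiring attention is that this proposition asks for a \emph{homotopy} equivalence $F \colon \cat{S}(A,B) \ral{\simeq} \cat{T}(FA,FB)$, whereas a Dwyer--Kan equivalence only provides a weak equivalence; here I would invoke Whitehead's theorem, using that both the source \emph{and} the target have the homotopy type of a CW complex, to upgrade each weak equivalence to a homotopy equivalence. The remaining hypothesis, that $\pi_0 F$ is essentially surjective, is immediate since a Dwyer--Kan equivalence induces an equivalence of categories $\pi_0 F \colon \pi_0 \cat{S} \ral{\simeq} \pi_0 \cat{T}$. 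Hence Proposition~\ref{pr:PushForwardDistrib} applies and $\cat{T}$ is $n$-distributive.

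There is no genuine obstacle; the proof is a formal assembly of the two preceding propositions. The only thing worth flagging is why the CW hypothesis is imposed on the mapping spaces of \emph{both} $\cat{S}$ and $\cat{T}$: the hypothesis on $\cat{S}$ is what Proposition~\ref{pr:PullBackDistrib} needs directly, while having it on both is what licenses the passage from weak to homotopy equivalence feeding into Proposition~\ref{pr:PushForwardDistrib}.
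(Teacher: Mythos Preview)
Your proposal is correct and follows essentially the same approach as the paper: the paper's proof simply cites Propositions~\ref{pr:PullBackDistrib} and~\ref{pr:PushForwardDistrib}, and you have correctly unpacked which hypotheses each direction uses, including the Whitehead step that explains why the CW hypothesis is needed on both $\cat{S}$ and $\cat{T}$.
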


\begin{proof}
This follows from Propositions~\ref{pr:PullBackDistrib} and \ref{pr:PushForwardDistrib}.
\end{proof}

\subsection{CW-approximation}

Using homotopy invariance, we will show that the assumption of having Serre cofibrant mapping spaces in Theorem~\ref{thm:InftyDistrib} is innocuous.

\begin{lem}[CW approximation] \label{lem:CWApprox}
Let $\cat{T}$ be a weakly bilinear mapping theory. Then there is a weakly bilinear mapping theory $\cat{S}$ whose mapping spaces are CW complexes, together with a map of left linear mapping theories $\cat{S} \ral{\sim} \cat{T}$ which is a Dwyer--Kan equivalence.
\end{lem}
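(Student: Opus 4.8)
The plan is to apply the classical CW‑replacement functor $Q \dfn \abs{\Sing(-)} \colon \Topp_* \to \Topp_*$ to all mapping spaces of $\cat{T}$ at once. Recall the formal properties of $Q$ that will be used: $\Sing$ preserves all limits, in particular finite products, and sends Serre fibrations to Kan fibrations; geometric realization $\abs{-}$ preserves colimits and finite products (in our convenient category of spaces), sends Kan fibrations to Serre fibrations \cite{GoerssJ09}*{Theorem~10.10}, and sends weak equivalences to weak equivalences. Hence $Q$ preserves finite products, sends Serre fibrations to Serre fibrations and weak equivalences to weak equivalences, the space $QX$ is always a CW complex, and the counit $\ep \colon QX \ral{\sim} X$ is a natural weak equivalence. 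Moreover, since $\abs{-}$ is strong symmetric monoidal and $\Sing$ is lax symmetric monoidal for the smash product, $Q$ is lax symmetric monoidal for $\sm$, and $\ep$ is a monoidal natural transformation (for $\sm$ and for the cartesian product alike, the latter because $Q$ is strong monoidal for $\x$).

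To carry this out, first I would define $\cat{S}$: its objects are those of $\cat{T}$, its mapping spaces are $\cat{S}(A,B) \dfn Q\cat{T}(A,B)$, its composition is the composite
\[
\cat{S}(B,C) \sm \cat{S}(A,B) = Q\cat{T}(B,C) \sm Q\cat{T}(A,B) \to Q\left( \cat{T}(B,C) \sm \cat{T}(A,B) \right) \ral{Q\mu} Q\cat{T}(A,C),
\]
using the lax structure map of $Q$, and its identities are $Q(1_A)$ precomposed with the unit map of $Q$. The lax monoidal axioms together with the category axioms of $\cat{T}$ make $\cat{S}$ a $\Topp_*$‑category, and the assignment $F \colon \cat{S} \to \cat{T}$ which is the identity on objects and the counit $\ep$ on each mapping space is a $\Topp_*$‑functor precisely because $\ep$ is a monoidal natural transformation.

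Next I would check that $\cat{S}$ is a weakly bilinear mapping theory and $F$ a Dwyer--Kan equivalence. Smallness is inherited from $\cat{T}$. Since $Q$ preserves finite products and $\cat{T}$ has them, every product $A \x B$ and the zero object of $\cat{T}$ remain products resp.\ zero object in $\cat{S}$, and $F$ carries them to the corresponding ones in $\cat{T}$. Left linearity is a family of equalities between maps of spaces built from $+$, $\mu$, and finite products of mapping spaces, so applying the product‑preserving functor $Q$ transports it verbatim to $\cat{S}$; in particular each $\cat{S}(A,B)$ is a topological monoid with the image of $0$ as unit, composition in $\cat{S}$ is left linear, and $F = \ep$ is a monoid homomorphism. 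For property~\eqref{item:TrivFib} of Proposition~\ref{pr:WeaklyBilinear}, under the product identifications just mentioned the map $(i_A^*, i_B^*) \colon \cat{S}(A \x B, Z) \to \cat{S}(A,Z) \x \cat{S}(B,Z)$ is $Q$ applied to the corresponding map for $\cat{T}$, which is a trivial Serre fibration; since $Q$ preserves trivial Serre fibrations, so is the map for $\cat{S}$. Finally, $F$ is a Dwyer--Kan equivalence: each $\ep \colon \cat{S}(A,B) \ral{\sim} \cat{T}(A,B)$ is a weak equivalence, and $\pi_0 F$ is the identity on objects and a bijection on each hom-set, hence an isomorphism of categories.

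The only real subtlety — and what demands care rather than cleverness — is that $Q$ is merely lax (not strong) monoidal for the smash product, so one must verify the $\Topp_*$‑category axioms for $\cat{S}$ and the identification of $(i_A^*, i_B^*)^{\cat{S}}$ with $Q\bigl((i_A^*, i_B^*)^{\cat{T}}\bigr)$ using only the lax structure maps and their coherence (in particular the fact that the lax smash structure map of $Q$, precomposed with the canonical quotient $QX \x QY \to QX \sm QY$, agrees with $Q$ of that quotient after the product iso). This is bookkeeping with no genuine obstacle. A more hands‑on alternative — CW‑approximating the mapping spaces one at a time and reassembling the structure by obstruction theory — would work too, but the functorial construction above avoids all choices.
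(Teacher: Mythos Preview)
Your proof is correct and follows essentially the same approach as the paper: apply the functorial CW approximation $\abs{\Sing(-)}$ to every mapping space of $\cat{T}$, and use that this functor preserves finite products and (trivial) Serre fibrations to transport the weakly bilinear structure. The paper's proof is considerably terser and leaves the definition of composition in $\cat{S}$ and most of the verifications implicit; your explicit discussion of the lax monoidal structure for $\sm$, the identification of $(i_A^*,i_B^*)^{\cat{S}}$ with $Q$ of the corresponding map in $\cat{T}$, and the monoidality of the counit fills in exactly the ``one readily checks'' that the paper elides.
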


\begin{proof}
Denote $LX \dfn \abs{\Sing(X)}$, equipped with the counit $\ep \colon LX \ral{\sim} X$, which is a functorial CW approximation in $\Topp$, in particular a Serre cofibrant replacement. %
Define the category $\cat{S}$ with the same objects as $\cat{T}$, and mapping spaces $\cat{S}(A,B) \dfn L\cat{T}(A,B)$, along with a functor $\ep \colon \cat{S} \to \cat{T}$ defined on mapping spaces by $\ep \colon L\cat{T}(A,B) \to \cat{T}(A,B)$. %

The functor $L \colon \Topp \to \Topp$ preserves finite products, in particular the terminal object $L(\ast) = \ast$, and also satisfies $L(S^0) = S^0$. Moreover, $L$ preserves (trivial) Serre fibrations. %
From those facts, one readily checks that $\cat{S}$ has the desired properties. 
\end{proof}

\begin{prop} \label{pr:InftyDistribMixed}
Let $\cat{T}$ be a weakly bilinear mapping theory in which every mapping space $\cat{T}(A,B)$ has the homotopy type of a CW complex. Then $\cat{T}$ is $\infty$-distributive.
\end{prop}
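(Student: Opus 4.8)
The plan is to reduce to Theorem~\ref{thm:InftyDistrib} via CW approximation and then transport the resulting $\infty$-distributor back along a Dwyer--Kan equivalence, using the homotopy invariance established in Section~\ref{sec:HomotInvar}.

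First I would apply Lemma~\ref{lem:CWApprox} to obtain a weakly bilinear mapping theory $\cat{S}$ all of whose mapping spaces $\cat{S}(A,B)$ are genuine CW complexes, together with a morphism of left linear mapping theories $\ep \colon \cat{S} \to \cat{T}$ that is a Dwyer--Kan equivalence. Since every CW complex is a retract of a cell complex, hence Serre cofibrant, the mapping theory $\cat{S}$ satisfies the hypotheses of Theorem~\ref{thm:InftyDistrib}. Applying that theorem produces a (good) $\infty$-distributor for $\cat{S}$; in particular $\cat{S}$ is $\infty$-distributive.

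Next I would transfer this structure along $\ep$. Both $\cat{S}$ and $\cat{T}$ have mapping spaces of the homotopy type of a CW complex --- literally so for $\cat{S}$, and by hypothesis for $\cat{T}$ --- and $\ep$ is a morphism of left linear $\Topp_*$-categories which is a Dwyer--Kan equivalence, so Corollary~\ref{cor:HomotInvar} applies with $n = \infty$ and yields that $\cat{T}$ is $\infty$-distributive. (Alternatively one can invoke Proposition~\ref{pr:PushForwardDistrib} directly, since $\ep$ induces homotopy equivalences on all mapping spaces and $\pi_0 \ep$ is essentially surjective.)

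There is essentially no hard step here: the content of the proposition lies entirely in the lemmas being assembled, not in any new argument. The only points requiring a moment's care are checking that the CW hypotheses line up --- that CW complexes are Serre cofibrant, so that Theorem~\ref{thm:InftyDistrib} genuinely applies to $\cat{S}$, and that the ``homotopy type of a CW complex'' hypothesis imposed on $\cat{T}$ is precisely what Corollary~\ref{cor:HomotInvar} needs on the target side --- and both of these are immediate. Note also that goodness of the distributor on $\cat{S}$ is not claimed to survive the transfer, and it is not needed for the statement.
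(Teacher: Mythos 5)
Your proof is correct and follows essentially the same route as the paper's: apply Lemma~\ref{lem:CWApprox} to get a Serre cofibrant (CW) replacement $\cat{S}$, invoke Theorem~\ref{thm:InftyDistrib} to make $\cat{S}$ $\infty$-distributive, and push the structure forward to $\cat{T}$. The paper cites Proposition~\ref{pr:PushForwardDistrib} directly rather than Corollary~\ref{cor:HomotInvar}, but as you note the two amount to the same thing here, and you have correctly identified that the CW hypothesis on $\cat{T}$ is what upgrades the weak equivalence on mapping spaces to a homotopy equivalence.
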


\begin{proof}
Let $\ep \colon \cat{S} \ral{\sim} \cat{T}$ be a Serre cofibrant replacement as in Lemma~\ref{lem:CWApprox}. By Theorem~\ref{thm:InftyDistrib}, $\cat{S}$ is $\infty$-distributive. By Proposition~\ref{pr:PushForwardDistrib}, $\cat{T}$ is also $\infty$-distributive.
\end{proof}

\appendix

\section{Models for spectra} \label{sec:BFSpectra}

In this appendix, we work out some point-set features of spectra that are needed for our construction.
We first recall some properties of \emph{Bousfield--Friedlander spectra} \cite{BousfieldF78}*{\S 2.1}.

\subsection{Bousfield--Friedlander spectra}

\begin{nota}
Let $s\Set_*$ denote the category of pointed simplicial sets. Let $\Si \colon s\Set_* \to s\Set_*$ denote the reduced suspension functor, given by the smash product $\Si T = S^1 \sm T$, using the model of the circle $S^1 = \De^1 / \del \De^1$. Note that this suspension is \emph{not} the Kan suspension \cite{GoerssJ09}*{\S III.5}.

Let $\Spec$ denote the category of Bousfield--Friedlander spectra of simplicial sets \cite{BousfieldF78}*{Definition~2.1}.
\end{nota}

Equip $\Spec$ with the \emph{stable model structure}. In this model structure, a spectrum $X$ is cofibrant if and only if its bonding maps $\si^X_n \colon \Si X_n \to X_{n+1}$ are cofibrations in $s\Set$; $X$ is fibrant if and only if $X$ is an $\Om$-spectrum and levelwise fibrant. %

\begin{thm}
\cite{BousfieldF78}*{Theorem~2.3} The stable model structure makes $\Spec$ into a simplicial model category.%
\end{thm}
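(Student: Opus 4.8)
The plan is to verify the two parts of the definition of a simplicial model category (\cite{Quillen67}*{\S II.2}): that $\Spec$ carries a compatible tensoring, cotensoring, and enrichment over $s\Set_*$, and that the pushout-product axiom SM7 holds for this simplicial structure and the stable model structure. The simplicial structure is the evident levelwise one: the mapping space $\ul{\Spec}(X,Y)$ has $n$-simplices $\Spec(X \sm \De^n_+, Y)$; the tensor $X \sm K$ of a spectrum $X$ with a pointed simplicial set $K$ is formed levelwise, $(X \sm K)_n = X_n \sm K$ with bonding maps $\si^X_n \sm \id_K$; and the cotensor $X^K$ is formed levelwise, $(X^K)_n = (X_n)^K$. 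The adjunction isomorphisms and coherences one needs --- associativity of $\sm$ with respect to the smash of simplicial sets, the adjunction $\ul{\Spec}(X \sm K, Y) \cong \ul{\Spec}(X, Y^K)$, and so on --- are inherited degreewise from the corresponding statements in $s\Set_*$, so this part is purely formal. All the content therefore lies in SM7.

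First I would record SM7 for the \emph{strict} (levelwise) model structure on $\Spec$, whose weak equivalences and fibrations are detected levelwise and whose cofibrations are, as in the excerpt, described by a levelwise condition on the (relative) latching maps. Since $- \sm K$, the cotensor, and the mapping space are all defined one simplicial degree at a time, and since $s\Set_*$ satisfies SM7 over $s\Set$, the pushout-product of a strict cofibration $i \colon A \to B$ with a cofibration $j \colon K \to L$ of simplicial sets is again a strict cofibration, and it is a strict trivial cofibration as soon as $i$ or $j$ is trivial, because each of these claims may be checked levelwise. The stable model structure shares its cofibrations with the strict one, and its weak equivalences are the \emph{stable equivalences}, the maps inducing isomorphisms on $\pi_k^s X \dfn \colim_n \pi_{k+n} \abs{X_n}$. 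Consequently the non-trivial half of SM7 --- a pushout-product of cofibrations is a cofibration --- is immediate from the strict case, as is the case in which $j$ is a trivial cofibration of simplicial sets, since a levelwise weak equivalence is in particular a stable equivalence.

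The one substantive point that remains is: if $i \colon A \to B$ is a \emph{stable} trivial cofibration (not merely a strict one) and $j \colon K \to L$ is any cofibration of simplicial sets, then the pushout-product map $(B \sm K) \cup_{A \sm K} (A \sm L) \to B \sm L$ is a stable equivalence. I would deduce this from the lemma that for every pointed simplicial set $K$ the functor $- \sm K$ preserves stable equivalences: granting it, $i \sm \id_K \colon A \sm K \to B \sm K$ and $i \sm \id_L \colon A \sm L \to B \sm L$ are stable equivalences, and since $i \sm \id_K$ is a cofibration, a standard gluing argument for homotopy pushouts then identifies $(B \sm K) \cup_{A \sm K} (A \sm L) \to B \sm L$ as a stable equivalence. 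I expect this lemma to be the main obstacle, precisely because a stable equivalence is defined through the filtered colimit $\pi_k^s$ and one must check that this colimit is undisturbed by the levelwise smash with $K$; the standard route writes $K$ as a filtered colimit of finite pointed simplicial sets and analyzes how smashing with a finite $K$ reindexes the colimit defining $\pi_*^s$, compatibly with the Bousfield--Friedlander suspension. This is the technical heart of \cite{BousfieldF78}*{\S 2}, and rather than reprove it I would simply invoke their Theorem~2.3 (and the lemmas leading to it); everything else in the argument is bookkeeping across simplicial degrees.
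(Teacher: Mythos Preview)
The paper does not prove this theorem at all: it is simply quoted, with a citation to \cite{BousfieldF78}*{Theorem~2.3}, and used as a black box. So there is no ``paper's own proof'' to compare against; your sketch is extra work that the authors did not attempt.

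As a standalone outline your approach is the standard one and is essentially correct up to the last paragraph, but that paragraph is circular. You are trying to establish exactly \cite{BousfieldF78}*{Theorem~2.3}, yet to handle the case where $i$ is a stable trivial cofibration you write that you would ``simply invoke their Theorem~2.3 (and the lemmas leading to it).'' If you want a self-contained argument, you must actually supply the lemma that $-\sm K$ preserves stable equivalences (e.g., reduce to finite $K$, then to spheres via cofiber sequences, and use that suspension is a stable equivalence), or cite the specific preparatory lemmas in \cite{BousfieldF78}*{\S 2} rather than the theorem itself. If instead you are content to cite Bousfield--Friedlander for the result, then the entire sketch is unnecessary --- which is precisely the choice the paper makes.
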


In particular, $\Spec$ is enriched in (pointed) simplicial sets, where the function complex $\ul{\Spec}(X,Y)$ has $n$-simplices
\[
\ul{\Spec}(X,Y)_n \cong \Hom_{\Spec} \left( X \ot \De^n, Y \right) 
\]
as described in \cite{GoerssJ09}*{\S II.2}. Via geometric realization, this yields a $\Topp_*$-category $\Spec$, with mapping spaces
\[
\Spec(X,Y) := \abs{\ul{\Spec}(X,Y)}.
\]

\begin{rem}
We could have worked with other models of spectra. 
For comparisons between different models, see \cite{BousfieldF78}*{\S 2.5}, \cite{MandellMSS01}, and \cite{Schwede07}.
\end{rem}

\subsection{Pushout-product axiom with respect to the Cartesian product}

\begin{lem} \label{lem:SimpSets}
\begin{enumerate}
\item \label{item:SuspSimpSet} For any simplicial sets $S$ and $T$, the natural map $\Si (S \x T) \to \Si S \x \Si T$ is a monomorphism. %
\item \label{item:PushoutWedge} Let $i \colon S \to S'$ and $j \colon T \to T'$ be monomorphisms of pointed simplicial sets. Then the induced map
\[
(S' \vee T') \cup_{S \vee T} (S \x T) \to S' \x T'
\]
is a monomorphism.
\end{enumerate}
\end{lem}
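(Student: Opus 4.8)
Both parts are statements about monomorphisms of simplicial sets, so the plan is to reduce everything to a check on sets of $n$-simplices for each fixed $n \geq 0$, where a monomorphism is just an injection.

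For part~\eqref{item:SuspSimpSet}, recall that $\Si T = S^1 \sm T = (\De^1/\del\De^1) \sm T$. The plan is to first understand the $n$-simplices of $\Si T$: writing $S^1 = \De^1/\del\De^1$, the smash $S^1 \sm T$ is the quotient of $\De^1 \x T$ that collapses $\del\De^1 \x T \cup \De^1 \x \{0\}$ to the basepoint. So a non-basepoint $n$-simplex of $\Si T$ is represented by a pair $(\al, s)$ with $\al \in \De^1_n$ \emph{not} in $(\del\De^1)_n$ (equivalently, $\al$ is a surjection $[n]\to[1]$) and $s \in T_n$ a non-basepoint simplex, and two such pairs are identified only if they are literally equal. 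The natural map $\Si(S\x T)\to \Si S \x \Si T$ sends the class of $(\al, (s,t))$ to $(\text{class of }(\al,s), \text{class of }(\al,t))$. The key point is that the \emph{same} simplex $\al \in \De^1_n$ appears in both coordinates, so from the image one recovers $\al$ (as long as either coordinate is a non-basepoint, which forces $\al$ to be a surjection) and then recovers $s$ and $t$ separately; hence the map is injective on $n$-simplices. One should also handle the basepoint case, which is immediate. This gives the monomorphism claim.

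For part~\eqref{item:PushoutWedge}, the plan is again to pass to $n$-simplices. Since $i$ and $j$ are monomorphisms, one may identify $S_n$ with a subset of $S'_n$ and $T_n$ with a subset of $T'_n$. The wedge $S' \vee T'$ has $n$-simplices $S'_n \cup_{\{*\}} T'_n$ (the disjoint union with basepoints identified), and similarly for $S \vee T$; the product $S'\x T'$ has $n$-simplices $S'_n \x T'_n$. Taking $n$-simplices commutes with the pushout in question (colimits of simplicial sets are computed levelwise), so the source of the map, on $n$-simplices, is the pushout in $\Set$
\[
(S'_n \vee T'_n) \cup_{S_n \vee T_n} (S_n \x T_n),
\]
and the map to $S'_n \x T'_n$ sends $s' \in S'_n$ to $(s', *)$, sends $t'\in T'_n$ to $(*, t')$, and sends $(s,t) \in S_n \x T_n$ to $(s,t)$. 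One checks directly that this is injective: the image of the $S'_n$-part is $S'_n \x \{*\}$, the image of the $T'_n$-part is $\{*\}\x T'_n$, the image of the $S_n\x T_n$-part is $S_n \x T_n$, these three subsets of $S'_n \x T'_n$ have pairwise intersections exactly matching the identifications made in forming the pushout (the $S'_n$ and $T'_n$ parts meet only at $(*,*)$; the $S_n\x T_n$ part meets $S'_n\x\{*\}$ in $S_n \x \{*\}$, which is the image of $S_n \subseteq S_n \vee T_n$, and similarly on the other side), and no further collapsing occurs. Hence the map is injective on each $S'_n \x T'_n$, so it is a monomorphism.

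The routine bookkeeping is verifying that the three pieces of the pushout map injectively with the stated pairwise intersections; I do not expect a genuine obstacle, since both parts are elementary once one writes out simplices explicitly. The only mild subtlety is keeping track of basepoints correctly in the smash product description in part~\eqref{item:SuspSimpSet} — in particular remembering that $\De^1$ collapses to the basepoint over $\del\De^1$ \emph{and} at the $0$-cone point — so that the recovery of $\al$ from the image is valid precisely on non-basepoint simplices.
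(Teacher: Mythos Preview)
Your proposal is correct and follows essentially the same approach as the paper. For part~\eqref{item:SuspSimpSet} the paper packages the description of $n$-simplices more compactly as $(\Si T)_k = \bigvee_{i=1}^{k} T_k$ (indexed by the $k$ surjections $[k]\to[1]$), so that the map in question becomes $\bigvee_{i=1}^{k}(S_k\times T_k)\to\bigl(\bigvee_{j=1}^{k}S_k\bigr)\times\bigl(\bigvee_{j'=1}^{k}T_k\bigr)$, whose injectivity is immediate from the diagonal index; this is the same ``recover $\alpha$'' observation you make. For part~\eqref{item:PushoutWedge} the paper simply says it follows from the analogous statement for pointed sets, and your argument is precisely a correct expansion of that one line.
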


\begin{proof}
(1) For every $k \geq 0$, the $k$-simplices of $S^1$ are given by 
\[
(S^1)_k = \De(\ord{k},\ord{1}) / c_0 \sim c_1
\]
where $c_i$ denotes the constant function with value $i$. The $k$-simplices of the suspension $\Si T$ are
\[
(\Si T)_k = \bigvee_{i=1}^k T_k.
\]
The map $\te \colon \Si (S \x T) \to \Si S \x \Si T$ has in simplicial degree $k$ the map of pointed sets
\[
\xymatrix{
(\Si (S \x T))_k \ar@{=}[d] \ar[r]^-{\te_k} & (\Si S \x \Si T)_k \ar@{=}[d] \\
\bigvee_{i=1}^{k} (S_k \x T_k) & \left( \bigvee_{j=1}^k S_k \right) \x \left( \bigvee_{j'=1}^k T_k \right) \\ 
}
\]
which is injective.

(2) This follows from the analogous statement for pointed sets. %
\end{proof}

\begin{lem} \label{lem:WeakPushoutProd}
If $X$ and $Y$ are cofibrant spectra, then the natural map $\io \colon X \vee Y \to X \x Y$ is a cofibration. In particular, $X \x Y$ is cofibrant.%
\end{lem}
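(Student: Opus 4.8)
The plan is to verify directly the defining property of cofibrations in the stable model structure on $\Spec$. Recall that a map $f \colon X \to Y$ of Bousfield--Friedlander spectra is a cofibration if and only if $f_0 \colon X_0 \to Y_0$ is a monomorphism of simplicial sets and, for every $n \geq 0$, the canonical map
\[
\la_n(f) \colon X_{n+1} \cup_{\Si X_n} \Si Y_n \to Y_{n+1}
\]
is a monomorphism; taking $f$ to be $\ast \to X$ recovers the description of cofibrant spectra recalled above. So I would check that $\io \colon X \vee Y \to X \x Y$ satisfies these two conditions. At level $0$, the map $X_0 \vee Y_0 \to X_0 \x Y_0$ is the inclusion of a wedge into a product of pointed simplicial sets, which is a monomorphism (the case $S = T = \ast$ of Lemma~\ref{lem:SimpSets}\eqref{item:PushoutWedge}).

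For the relative condition at level $n$, write $\si \colon \Si X_n \to X_{n+1}$ and $\tau \colon \Si Y_n \to Y_{n+1}$ for the bonding maps of $X$ and $Y$; these are monomorphisms since $X$ and $Y$ are cofibrant. The map $\la_n(\io)$ is then
\[
(X_{n+1} \vee Y_{n+1}) \cup_{\Si X_n \vee \Si Y_n} \Si(X_n \x Y_n) \longrightarrow X_{n+1} \x Y_{n+1},
\]
and I would factor it through the pushout
\[
P \dfn (X_{n+1} \vee Y_{n+1}) \cup_{\Si X_n \vee \Si Y_n} (\Si X_n \x \Si Y_n),
\]
the factorization being induced by the natural map $\te \colon \Si(X_n \x Y_n) \to \Si X_n \x \Si Y_n$. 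The first map of the factorization is, by pasting of pushout squares, a pushout of $\te$; and $\te$ is a monomorphism by Lemma~\ref{lem:SimpSets}\eqref{item:SuspSimpSet}, so, since monomorphisms of simplicial sets are stable under pushout, this map is a monomorphism. The second map $P \to X_{n+1} \x Y_{n+1}$ is precisely the map appearing in Lemma~\ref{lem:SimpSets}\eqref{item:PushoutWedge} applied to the monomorphisms $\si$ and $\tau$, hence a monomorphism. Thus $\la_n(\io)$ is a composite of monomorphisms, so $\io$ is a cofibration. For the final assertion, note that $X \vee Y$ is the coproduct in $\Spec$ of the cofibrant spectra $X$ and $Y$, hence cofibrant (alternatively, its bonding maps $\Si X_n \vee \Si Y_n \to X_{n+1} \vee Y_{n+1}$ are wedges of the monomorphisms $\si$ and $\tau$), so the composite $\ast \to X \vee Y \xrightarrow{\io} X \x Y$ is a cofibration and $X \x Y$ is cofibrant.

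The only point requiring any care is checking that $\te$ is compatible with the two maps out of $\Si X_n \vee \Si Y_n$ (i.e., that the composite $\Si X_n \vee \Si Y_n \to \Si(X_n \x Y_n) \xrightarrow{\te} \Si X_n \x \Si Y_n$ is the standard wedge inclusion), which is what makes the factorization through $P$ and the pushout-pasting argument legitimate; this is a routine naturality check. I do not expect a real obstacle here: the substance of the lemma is packaged into Lemma~\ref{lem:SimpSets}, and the rest is formal manipulation of pushouts together with the stability of simplicial monomorphisms under pushout.
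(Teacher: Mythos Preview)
Your proof is correct and follows essentially the same route as the paper: verify the level-$0$ and relative latching conditions for $\io$, factor $\la_n(\io)$ through the intermediate pushout $P$, and appeal to the two parts of Lemma~\ref{lem:SimpSets} for the two resulting monomorphisms. Your write-up in fact spells out slightly more detail than the paper does (the pushout-pasting justification and the compatibility of $\te$ with the wedge inclusions).
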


\begin{proof}
In level $0$, we have
\[
\io_0 \colon X_0 \vee Y_0 \to X_0 \x Y_0
\]
which is a cofibration in $s\Set_*$, i.e., a monomorphism. Given $n \geq 0$, consider the commutative diagram in $s\Set_*$
\[
\resizebox{\textwidth}{!}{
\xymatrix{
\Si (X \vee Y)_n \ar@{=}[d] \ar[r]^-{\Si \io_n} & \Si (X \x Y)_n \ar[d] \\
\Si X_n \vee \Si Y_n \ar[d]_{\si_n^X \vee \si_n^Y} \ar[r] & \Si X_n \x \Si Y_n \ar[d]^{\si_n^X \x \si_n^Y} \\
**[l] X_{n+1} \vee Y_{n+1} = (X \vee Y)_{n+1} \ar[r]^-{\io_{n+1}} & **[r] (X \x Y)_{n+1} = X_{n+1} \x Y_{n+1}. \\
}
}
\]
We want to show that the map of simplicial sets
\[
X_{n+1} \vee Y_{n+1} \bigcup_{\Si X_n \vee \Si Y_n} \Si (X_n \x Y_n) \to X_{n+1} \x Y_{n+1}
\]
is a monomorphism. This map is a composite
\[
\resizebox{\textwidth}{!}{
\xymatrix{
X_{n+1} \vee Y_{n+1} \bigcup_{\Si X_n \vee \Si Y_n} \Si (X_n \x Y_n) \ar[r] & X_{n+1} \vee Y_{n+1} \bigcup_{\Si X_n \vee \Si Y_n} \Si X_n \x \Si Y_n \ar[r] & X_{n+1} \x Y_{n+1}
}
}
\]
where the first step is a monomorphism, since $\Si (X_n \x Y_n) \inj \Si X_n \x \Si Y_n$ is a monomorphism, by Lemma~\ref{lem:SimpSets} \eqref{item:SuspSimpSet}. The second step is a monomorphism, by Lemma~\ref{lem:SimpSets} \eqref{item:PushoutWedge}. Finally, the map $\ast \to X \x Y$ is a composite of cofibrations
\[
\ast \inj X \inj X \vee Y \inj X \x Y
\]
and thus $X \x Y$ is cofibrant.
\end{proof}

\subsection{Construction of Eilenberg--MacLane spectra} \label{sec:EMSpectra}

There are different constructions of the Eilenberg--MacLane spectrum $HA$ as an $\Om$-spectrum, whose constituent spaces are Eilenberg--MacLane spaces $(HA)_n = K(A,n)$. The specific features of $HA$ depend on the specific construction of Eilenberg--MacLane spaces. 
The following argument using iterated classifying spaces was kindly provided to us by Marc Stephan. More details can be found in \cite{Stephan15}*{Part~I}.

Recall the following construction of the classifying space of a simplicial group; %
the topological analogue is described in \cite{May99}*{\S 16.5}.  
Consider the functor $B \colon s\Gp \to s\Set_*$ given by
\[
BG := \diag B_{\bu}(\ast,G,\ast)
\]
where $B_{\bu}(X,G,Y)$ is the two-sided bar construction \cite{May75}*{\S 7}, which is a bisimplicial set, and $\diag$ denotes its diagonal. Explicitly, $B(*,G,*)$ has in external degree $n$ the simplicial set $B_n(\ast,G,\ast) = G^n$, so that $BG$ has as $k$-simplices the set $(BG)_k = (G_k)^k$. Note that $B$ preserves finite products and thus induces a functor on abelian group objects $B \colon s\Ab \to s\Ab$.

\begin{lem} \label{lem:GoodModelHA}
Let $A$ be an abelian group. Then there is an Eilenberg--MacLane spectrum $HA$ in $\Spec$ which is fibrant, cofibrant, and an abelian group object in $\Spec$, with addition map $+ \colon HA \x HA \to HA$ compatible with the addition map of $A$.

Moreover, if $A$ is an $\F_p$ vector space, then $HA$ is an $\F_p$-vector space object in $\Spec$.
\end{lem}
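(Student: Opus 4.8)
The plan is to construct $HA$ by iterating the classifying space functor $B \colon s\Ab \to s\Ab$, applied to the simplicial abelian group $A$ regarded as constant. First I would recall that for a simplicial abelian group $G$, the natural map $G \to \Om BG$ (adjoint to a natural simplicial homotopy equivalence $\Si G \to BG$ on the level of the suspension) exhibits $B$ as a delooping, so that $\pi_n BG \cong \pi_{n-1} G$; since $B$ preserves finite products it lifts to $s\Ab$, and since everything in sight is a Kan complex (simplicial abelian groups are Kan), $BG$ is levelwise fibrant. Define $(HA)_n \dfn B^n(A)$, with bonding maps $\Si (HA)_n \to (HA)_{n+1}$ the adjoints of the equivalences $(HA)_n \to \Om (HA)_{n+1} = \Om B^{n+1}(A)$. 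This produces a spectrum $HA$ with $\pi_0 (HA)_n = A$ concentrated in degree $n$, i.e., an Eilenberg--MacLane spectrum; because each bonding map's adjoint is a weak equivalence, $HA$ is an $\Om$-spectrum, and it is levelwise fibrant, hence fibrant in the stable model structure.

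Next I would address cofibrancy. By the description of cofibrant objects in the stable model structure recalled before Lemma~\ref{lem:SimpSets}, it suffices to show that each bonding map $\si_n \colon \Si (HA)_n \to (HA)_{n+1}$ is a monomorphism of simplicial sets. Here I would use the explicit bar-construction formula: $B_\bu(\ast, G, \ast)$ has $B_m = G^m$, and the adjoint comparison map $\Si G \to BG$, $S^1 \sm G \to \diag B_\bu(\ast,G,\ast)$, is injective in each simplicial degree by the same combinatorial argument as in Lemma~\ref{lem:SimpSets}\eqref{item:SuspSimpSet} (an element of $(\Si G)_k = \bigvee_{i=1}^k G_k$ maps to the $1$-simplex-indexed copy of $G_k$ inside $(BG)_k = (G_k)^k$). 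Iterating, each $\si_n$ is a monomorphism, so $HA$ is cofibrant. This is the step I expect to require the most care, since one must pin down the precise simplicial model of $S^1$ and the bar construction and check injectivity compatibly across all the deloopings; but it is the same kind of bookkeeping already carried out in Lemma~\ref{lem:SimpSets}, just applied levelwise to the tower $B^n(A)$.

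Finally, the additive structure. Since $B$ is a product-preserving endofunctor of $s\Ab$, each $B^n(A)$ is canonically a simplicial abelian group, so $HA$ is an abelian group object in $\Spec$ with addition $+\colon HA \x HA \to HA$ induced levelwise by the addition of $A$; the bonding maps are group homomorphisms because $B$ and $\Si$ (as $S^1 \sm -$ applied to abelian group objects, using $\Si$ preserves products in the relevant degreewise sense) are additive, so $+$ is a map of spectra compatible with that of $A$ by construction. For the last clause, if $A$ is an $\F_p$-vector space then scalar multiplication by each $\la \in \F_p$ is a group endomorphism of $A$, hence induces a self-map of $HA$ of spectra; these assemble into an $\F_p$-action making $HA$ an $\F_p$-vector space object, since all the vector-space axioms are equalities of group homomorphisms already holding for $A$ and transported functorially through $B^n$ and $\Si$. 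I would close by remarking that this is exactly the model $K_0$ of Definition~\ref{def:EMMappingTh}, with $K_n = \sh^n K_0$, and refer to \cite{Stephan15}*{Part~I} for full details.
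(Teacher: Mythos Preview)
Your proposal is correct and follows essentially the same approach as the paper: define $(HA)_n = B^n A$ via the iterated bar construction, observe that the bonding map $\Si B^n A \to B^{n+1} A$ is in simplicial degree $k$ the inclusion $\bigvee_{i=1}^k (B^n A)_k \hookrightarrow \prod_{i=1}^k (B^n A)_k$ (hence a monomorphism, giving cofibrancy), and note that each $B^n A$ is a Kan simplicial abelian group with linear structure maps, yielding fibrancy and the abelian group (resp.\ $\F_p$-vector space) object structure. The paper's proof is slightly more direct in identifying the bonding map explicitly rather than invoking Lemma~\ref{lem:SimpSets}, but the content is the same.
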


\begin{proof}
Starting from an abelian group $A$, viewed as a constant simplicial abelian group, iterating the functor $B$ yields Eilenberg--MacLane spaces $B^n A \simeq K(A,n)$. Form a spectrum $HA$ defined by  $(HA)_n := B^n A$. %
The structure maps $\si_n \colon S^1 \sm B^n A \to B^{n+1} A$ have in simplicial degree $k$ the inclusion
\[
\bigvee_{i=1}^k (B^n A)_k \to \prod_{i=1}^k (B^n A)_k.
\]
In particular, $S^1 \sm B^n A \to B^{n+1} A$ is a cofibration of simplicial sets for each $n \geq 0$, so that $HA$ is cofibrant.  Moreover, $HA$ is an $\Om$-spectrum and each simplicial set $HA_n = B^n A$ is a Kan complex, since it is a simplicial group. Therefore, $HA$ is fibrant. Also, $B^n A$ is an abelian group object in $s\Set$ for each $n \geq 0$, and the structure maps $\si_n \colon S^1 \sm B^n A \to B^{n+1} A$ are linear in the factor $B^n A$, so that the adjunct structure maps
\[
\tild{\si}_n \colon B^n A \ral{\sim} \Om B^{n+1} A
\]
are maps of simplicial abelian groups.

Moreover, if $A$ is an $\F_p$-vector space, then each simplicial abelian group $B^n A$ is a simplicial $\F_p$-vector spaces, and thus $HA$ is an $\F_p$-vector space object. 
\end{proof}

\begin{rem}
It was pointed out to us by Irakli Patchkoria and Stefan Schwede that a model for the Eilenberg--MacLane spectrum $HA$ as in Lemma~\ref{lem:GoodModelHA} can also be obtained in symmetric spectra of simplicial sets, endowed with the absolute %
flat stable model structure \cite{Schwede12sym},  
also called the $S$ model structure in \cite{HoveySS00}*{Definition~5.3.6}. 
\end{rem}

Let $\sh \colon \Spec \to \Spec$ denote the \Def{shift functor} of spectra, defined by $\sh(X)_n = X_{n+1}$. The shift has the homotopy type of the suspension $\sh X \simeq \Si X$. 

\begin{cor} \label{cor:GoodModelFiniteProd}
The Eilenberg--MacLane spectrum $K_n^A := \sh^n HA \simeq \Si^n HA$ is also an $\Om$-spectrum (hence fibrant), cofibrant, and an abelian group object. Moreover, a finite product of objects $K_{n_i}^{A_i}$ is also a fibrant cofibrant abelian group object in $\Spec$. 
\end{cor}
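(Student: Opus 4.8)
The plan is to derive Corollary~\ref{cor:GoodModelFiniteProd} from Lemma~\ref{lem:GoodModelHA} by checking that all three relevant properties (being an $\Om$-spectrum, cofibrancy, being an abelian group object) are preserved under the shift functor $\sh$, and then that being a fibrant cofibrant abelian group object is preserved under finite products in $\Spec$.

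First I would record that $\sh$ preserves abelian group objects, since $\sh$ is defined levelwise by $\sh(X)_n = X_{n+1}$, and hence applies the forgetful/underlying data of an abelian group object levelwise; the addition map $\sh(+) \colon \sh(HA) \x \sh(HA) \to \sh(HA)$ is just $(+)$ shifted, and the group axioms, being equalities of maps of spectra, shift accordingly. The same holds for the $\F_p$-vector space structure when $A$ is an $\F_p$-vector space. Next, $\sh(HA)$ is again an $\Om$-spectrum: its adjunct bonding maps are $\tild{\si}_{n+1}^{HA} \colon (HA)_{n+1} \to \Om (HA)_{n+2}$, which are weak equivalences since $HA$ is an $\Om$-spectrum, and each $(HA)_{n+1} = B^{n+1}A$ is a Kan complex. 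For cofibrancy, recall that a Bousfield--Friedlander spectrum is cofibrant if and only if its bonding maps $\si_n \colon \Si X_n \to X_{n+1}$ are monomorphisms of simplicial sets; the bonding maps of $\sh(HA)$ are $\si_{n+1}^{HA} \colon \Si (HA)_{n+1} \to (HA)_{n+2}$, which are monomorphisms by the proof of Lemma~\ref{lem:GoodModelHA}. Iterating, $K_n^A = \sh^n HA$ is a fibrant cofibrant abelian group object (an $\F_p$-vector space object if $A$ is), and it has the homotopy type of $\Si^n HA$ since $\sh X \simeq \Si X$.

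Finally, for a finite product $\prod_i K_{n_i}^{A_i}$: products of abelian group objects are abelian group objects (with the product addition map), and products of $\Om$-spectra that are levelwise fibrant are again $\Om$-spectra that are levelwise fibrant, since $\Om$ and finite products commute with each other and with weak equivalences of Kan complexes; hence the product is fibrant. Cofibrancy of the finite product follows from Lemma~\ref{lem:WeakPushoutProd}, which states precisely that a product of two cofibrant spectra is cofibrant, applied inductively. This assembles the statement. I expect the only mild subtlety — not really an obstacle — is being careful that the shift functor interacts correctly with the adjunction $(\Si, \Om)$ defining $\Om$-spectra, i.e., that $\sh$ shifts the \emph{adjunct} bonding maps as claimed; everything else is a routine transport of the structure already produced in Lemma~\ref{lem:GoodModelHA} along $\sh$ and along finite products.
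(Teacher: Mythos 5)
Your proof is correct, and it fills in the details that the paper leaves implicit: the paper states Corollary~\ref{cor:GoodModelFiniteProd} without a proof environment, evidently regarding it as a routine consequence of Lemma~\ref{lem:GoodModelHA} and Lemma~\ref{lem:WeakPushoutProd}. The observations you make are exactly the ones needed: $\sh$ reindexes the levels, so it carries the abelian group object structure, the Kan condition, the $\Om$-spectrum condition (via the shifted adjunct bonding maps $\tild{\si}_{n+1}$), and the cofibration condition on bonding maps; and then finite products preserve abelian group objects, preserve fibrancy (as in any model category, or via your explicit $\Om$-spectrum argument), and preserve cofibrancy here by Lemma~\ref{lem:WeakPushoutProd}. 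One small simplification available to you: fibrancy of the finite product follows formally from $\Spec$ being a model category, since a finite product of fibrant objects is fibrant, without needing to unwind the $\Om$-spectrum condition.
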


\begin{bibdiv}
\begin{biblist}

\bib{Adams58}{article}{
  author={Adams, J. F.},
  title={On the structure and applications of the Steenrod algebra},
  journal={Comment. Math. Helv.},
  volume={32},
  date={1958},
  pages={180--214},
  issn={0010-2571},
  review={\MR {0096219 (20 \#2711)}},
}

\bib{Adams60}{article}{
  author={Adams, J. F.},
  title={On the non-existence of elements of Hopf invariant one},
  journal={Ann. of Math. (2)},
  volume={72},
  date={1960},
  pages={20 \ndash 104},
  issn={0003-486X},
  review={\MR {0141119 (25 \#4530)}},
}

\bib{BadziochBD14}{article}{
  author={Badzioch, Bernard},
  author={Blanc, David},
  author={Dorabia{\l }a, Wojciech},
  title={Recognizing mapping spaces},
  journal={J. Pure Appl. Algebra},
  volume={218},
  date={2014},
  number={1},
  pages={181--196},
  issn={0022-4049},
  review={\MR {3120619}},
  doi={10.1016/j.jpaa.2013.05.004},
}

\bib{Baues06}{book}{
  author={Baues, Hans-Joachim},
  title={The algebra of secondary cohomology operations},
  series={Progress in Mathematics},
  volume={247},
  publisher={Birkh\"auser Verlag},
  place={Basel},
  date={2006},
  pages={xxxii+483},
  isbn={3-7643-7448-9},
  isbn={978-3-7643-7448-8},
  review={\MR {2220189 (2008a:55015)}},
}

\bib{BauesB11}{article}{
  author={Baues, Hans-Joachim},
  author={Blanc, David},
  title={Comparing cohomology obstructions},
  journal={J. Pure Appl. Algebra},
  volume={215},
  date={2011},
  number={6},
  pages={1420--1439},
  issn={0022-4049},
  review={\MR {2769241 (2012f:55022)}},
  doi={10.1016/j.jpaa.2010.09.003},
}

\bib{BauesJ11}{article}{
  author={Baues, Hans-Joachim},
  author={Jibladze, Mamuka},
  title={Dualization of the Hopf algebra of secondary cohomology operations and the Adams spectral sequence},
  journal={J. K-Theory},
  volume={7},
  date={2011},
  number={2},
  pages={203 \ndash 347},
  issn={1865-2433},
  review={\MR {2787297 (2012h:55023)}},
  doi={10.1017/is010010029jkt133},
}

\bib{BauesM11}{article}{
  author={Baues, Hans-Joachim},
  author={Muro, Fernando},
  title={The algebra of secondary homotopy operations in ring spectra},
  journal={Proc. Lond. Math. Soc. (3)},
  volume={102},
  date={2011},
  number={4},
  pages={637--696},
  issn={0024-6115},
  review={\MR {2793446}},
  doi={10.1112/plms/pdq034},
}

\bib{BauesB15}{article}{
  author={Baues, Hans-Joachim},
  author={Blanc, David},
  title={Higher order derived functors and the Adams spectral sequence},
  journal={J. Pure Appl. Algebra},
  volume={219},
  date={2015},
  number={2},
  pages={199--239},
  issn={0022-4049},
  review={\MR {3250522}},
  doi={10.1016/j.jpaa.2014.04.018},
}

\bib{BauesFrankland16}{article}{
  author={Baues, Hans-Joachim},
  author={Frankland, Martin},
  title={2-track algebras and the Adams spectral sequence},
  journal={J. Homotopy Relat. Struct.},
  volume={11},
  date={2016},
  number={4},
  pages={679--713},
  issn={2193-8407},
  review={\MR {3578994}},
  doi={10.1007/s40062-016-0147-x},
}

\bib{BauesFranklandDGA}{article}{
  author={Baues, Hans-Joachim},
  author={Frankland, Martin},
  title={The DG-category of secondary cohomology operations},
  status={In preparation},
}

\bib{BlancS14}{article}{
  author={Blanc, David},
  author={Sen, Debasis},
  title={Mapping spaces and $R$-completion},
  eprint={arXiv:1304.5928},
  date={2014},
  status={Preprint},
}

\bib{BoardmanV73}{book}{
  author={Boardman, J. M.},
  author={Vogt, R. M.},
  title={Homotopy invariant algebraic structures on topological spaces},
  series={Lecture Notes in Mathematics, Vol. 347},
  publisher={Springer-Verlag, Berlin-New York},
  date={1973},
  pages={x+257},
  review={\MR {0420609}},
}

\bib{BousfieldF78}{article}{
  author={Bousfield, A. K.},
  author={Friedlander, E. M.},
  title={Homotopy theory of $\Gamma $-spaces, spectra, and bisimplicial sets},
  conference={ title={Geometric applications of homotopy theory (Proc. Conf., Evanston, Ill., 1977), II}, },
  book={ series={Lecture Notes in Math.}, volume={658}, publisher={Springer, Berlin}, },
  date={1978},
  pages={80--130},
  review={\MR {513569 (80e:55021)}},
}

\bib{Cranch10}{book}{
  author={Cranch, James},
  title={Algebraic Theories and (Infinity,1)-Categories},
  note={Thesis (Ph.D.)--University of Sheffield},
  date={2010},
  pages={(no paging)},
  eprint={arXiv:1011.3243},
}

\bib{Fuchs65}{article}{
  author={Fuchs, Martin},
  title={Verallgemeinerte Homotopie-Homomorphismen und klassifizierende R\"aume},
  language={German},
  journal={Math. Ann.},
  volume={161},
  date={1965},
  pages={197--230},
  issn={0025-5831},
  review={\MR {0195090}},
}

\bib{GoerssJ09}{book}{
  author={Goerss, Paul G.},
  author={Jardine, John F.},
  title={Simplicial homotopy theory},
  series={Modern Birkh\"auser Classics},
  note={Reprint of the 1999 edition [MR1711612]},
  publisher={Birkh\"auser Verlag, Basel},
  date={2009},
  pages={xvi+510},
  isbn={978-3-0346-0188-7},
  review={\MR {2840650}},
  doi={10.1007/978-3-0346-0189-4},
}

\bib{Harper02}{book}{
  author={Harper, John R.},
  title={Secondary cohomology operations},
  series={Graduate Studies in Mathematics},
  volume={49},
  publisher={American Mathematical Society, Providence, RI},
  date={2002},
  pages={xii+268},
  isbn={0-8218-3198-4},
  review={\MR {1913285 (2004c:55035)}},
  doi={10.1090/gsm/049},
}

\bib{HoveySS00}{article}{
  author={Hovey, Mark},
  author={Shipley, Brooke},
  author={Smith, Jeff},
  title={Symmetric spectra},
  journal={J. Amer. Math. Soc.},
  volume={13},
  date={2000},
  number={1},
  pages={149--208},
  issn={0894-0347},
  review={\MR {1695653 (2000h:55016)}},
  doi={10.1090/S0894-0347-99-00320-3},
}

\bib{Kapranov93}{article}{
  author={Kapranov, Mikhail M.},
  title={The permutoassociahedron, Mac Lane's coherence theorem and asymptotic zones for the KZ equation},
  journal={J. Pure Appl. Algebra},
  volume={85},
  date={1993},
  number={2},
  pages={119--142},
  issn={0022-4049},
  review={\MR {1207505}},
  doi={10.1016/0022-4049(93)90049-Y},
}

\bib{Kelly05}{article}{
  author={Kelly, G. M.},
  title={Basic concepts of enriched category theory},
  note={Reprint of the 1982 original [Cambridge Univ. Press, Cambridge; MR0651714]},
  journal={Repr. Theory Appl. Categ.},
  number={10},
  date={2005},
  pages={vi+137},
  review={\MR {2177301}},
}

\bib{Klaus01coc}{article}{
  author={Klaus, Stephan},
  title={Cochain operations and higher cohomology operations},
  language={English, with French summary},
  journal={Cahiers Topologie G\'eom. Diff\'erentielle Cat\'eg.},
  volume={42},
  date={2001},
  number={4},
  pages={261--284},
  issn={0008-0004},
  review={\MR {1876867}},
}

\bib{Kristensen63}{article}{
  author={Kristensen, Leif},
  title={On secondary cohomology operations},
  journal={Math. Scand.},
  volume={12},
  date={1963},
  pages={57--82},
  issn={0025-5521},
  review={\MR {0159333 (28 \#2550)}},
}

\bib{KristensenM67eva}{article}{
  author={Kristensen, Leif},
  author={Madsen, Ib},
  title={On evaluation of higher order cohomology operations},
  journal={Math. Scand},
  volume={20},
  date={1967},
  pages={114--130},
  issn={0025-5521},
  review={\MR {0222886}},
}

\bib{Kristensen69}{article}{
  author={Kristensen, Leif},
  title={On secondary cohomology operations. II},
  conference={ title={Conf. on Algebraic Topology}, address={Univ. of Illinois at Chicago Circle, Chicago, Ill.}, date={1968}, },
  book={ publisher={Univ. of Illinois at Chicago Circle, Chicago, Ill.}, },
  date={1969},
  pages={117--133},
  review={\MR {0250300}},
}

\bib{MandellMSS01}{article}{
  author={Mandell, M. A.},
  author={May, J. P.},
  author={Schwede, S.},
  author={Shipley, B.},
  title={Model categories of diagram spectra},
  journal={Proc. London Math. Soc. (3)},
  volume={82},
  date={2001},
  number={2},
  pages={441--512},
  issn={0024-6115},
  review={\MR {1806878 (2001k:55025)}},
  doi={10.1112/S0024611501012692},
}

\bib{MarklSS02}{book}{
  author={Markl, Martin},
  author={Shnider, Steve},
  author={Stasheff, Jim},
  title={Operads in algebra, topology and physics},
  series={Mathematical Surveys and Monographs},
  volume={96},
  publisher={American Mathematical Society, Providence, RI},
  date={2002},
  pages={x+349},
  isbn={0-8218-2134-2},
  review={\MR {1898414}},
}

\bib{May75}{article}{
  author={May, J. Peter},
  title={Classifying spaces and fibrations},
  journal={Mem. Amer. Math. Soc.},
  volume={1},
  date={1975},
  number={1, 155},
  pages={xiii+98},
  issn={0065-9266},
  review={\MR {0370579 (51 \#6806)}},
}

\bib{May99}{book}{
  author={May, J. P.},
  title={A concise course in algebraic topology},
  series={Chicago Lectures in Mathematics},
  publisher={University of Chicago Press, Chicago, IL},
  date={1999},
  pages={x+243},
  isbn={0-226-51182-0},
  isbn={0-226-51183-9},
  review={\MR {1702278 (2000h:55002)}},
}

\bib{MayP12}{book}{
  author={May, J. P.},
  author={Ponto, K.},
  title={More concise algebraic topology},
  series={Chicago Lectures in Mathematics},
  note={Localization, completion, and model categories},
  publisher={University of Chicago Press, Chicago, IL},
  date={2012},
  pages={xxviii+514},
  isbn={978-0-226-51178-8},
  isbn={0-226-51178-2},
  review={\MR {2884233}},
}

\bib{Milnor58}{article}{
  author={Milnor, John},
  title={The Steenrod algebra and its dual},
  journal={Ann. of Math. (2)},
  volume={67},
  date={1958},
  pages={150--171},
  issn={0003-486X},
  review={\MR {0099653}},
}

\bib{Quillen67}{book}{
  author={Quillen, Daniel G.},
  title={Homotopical algebra},
  series={Lecture Notes in Mathematics},
  number={43},
  publisher={Springer-Verlag, Berlin-New York},
  date={1967},
  pages={iv+156 pp. (not consecutively paged)},
  review={\MR {0223432}},
}

\bib{Schwede07}{article}{
  author={Schwede, Stefan},
  title={The stable homotopy category is rigid},
  journal={Ann. of Math. (2)},
  volume={166},
  date={2007},
  number={3},
  pages={837--863},
  issn={0003-486X},
  review={\MR {2373374 (2009g:55009)}},
  doi={10.4007/annals.2007.166.837},
}

\bib{Schwede12sym}{book}{
  author={Schwede, Stefan},
  title={Symmetric spectra},
  edition={Version 3.0},
  date={2012-04-12},
  status={Unpublished draft available on the author's website},
}

\bib{Stasheff63i}{article}{
  author={Stasheff, James Dillon},
  title={Homotopy associativity of $H$-spaces. I},
  journal={Trans. Amer. Math. Soc.},
  volume={108},
  date={1963},
  pages={275--292},
  issn={0002-9947},
  review={\MR {0158400}},
}

\bib{Stasheff70}{book}{
  author={Stasheff, James},
  title={$H$-spaces from a homotopy point of view},
  series={Lecture Notes in Mathematics, Vol. 161},
  publisher={Springer-Verlag, Berlin-New York},
  date={1970},
  pages={v+95},
  review={\MR {0270372}},
}

\bib{Stephan15}{book}{
  author={Stephan, Marc},
  title={Kan spectra, group spectra and twisting structures},
  note={Thesis (Ph.D.)--\'Ecole Polytechnique F\'ed\'erale de Lausanne},
  date={2015},
  pages={(no paging)},
}

\bib{Williams69}{article}{
  author={Williams, Francis D.},
  title={Higher homotopy-commutativity},
  journal={Trans. Amer. Math. Soc.},
  volume={139},
  date={1969},
  pages={191--206},
  issn={0002-9947},
  review={\MR {0240818}},
}

\end{biblist}
\end{bibdiv}

\end{document}